\newtheorem{thm}{Theorem}[section]
\newtheorem{prop}[thm]{Proposition}
\newtheorem{lem}[thm]{Lemma}
\theoremstyle{definition} 
\newtheorem{defn}[thm]{Definition}
\newtheorem{remark}[thm]{Remark}
\newcommand{\pa}{\varphi^t}
\newcommand{\R}{\mathbb{R}}
\newcommand{\Z}{\mathbb{Z}}
\newcommand{\N}{\mathbb{N}}
\newcommand{\G}{\mathcal{G}}
\newcommand{\bmo}{{\rm BMO}}
\newcommand{\cmo}{{\rm CMO}}
\def\cg{{\mathcal G}}
\newcommand{\GG}{ {\mathop \cg \limits^{    \circ}}_1}
\newcommand{\GGone}{ {\mathop \cg \limits^{    \circ}}_1}
\newcommand{\GGp}{{\mathop \cg\limits^{\circ}}_{1,\,1}}
\newcommand{\GGpone}{{\mathop \cg\limits^{\circ}}_{1,\,1}}
\newcommand{\GGpp}{{\mathop \cg\limits^{\circ}}_{1,\,1}}
\newcommand{\X}{\mathbb{X}}
\numberwithin{equation}{section}
\def\rr{{\mathbb R}}
\def\rrp{{\rr_+}}
\def\zz{{\mathbb Z}}
\def\cm{{\mathcal M}}
\def\cn{{\mathcal N}}
\def\crz{{\mathcal R}}
\def\fz{\infty}
\def\az{\alpha}
\def\supp{{\mathop\mathrm{\,supp\,}}}
\def\lz{\lambda}
\def\dz{\delta}
\def\ez{\epsilon}
\def\gz{{\gamma}}
\def\bgz{{\Gamma}}
\def\vz{\varphi}
\def\pa{\partial}
\def\wz{\widetilde}
\def\gfz{\genfrac{}{}{0pt}{}}
\def\dlz{{\Delta_\lz}}
\def\cms{{\mathcal M_S}}
\def\prz{{\partial}}
\def\gratx{{\nabla_{t,\,x}}}
\def\gratxo{{\nabla_{t_1,\,x_1}}}
\def\gratxt{{\nabla_{t_2,\,x_2}}}
\def\deltx{{\triangle_{t,\,x}}}
\def\rizo{{R_{\Delta_\lz,\,1}}}
\def\rizt{{R_{\Delta_\lz,\,2}}}
\def\slz{{\sharp_\lz}}
\def\qlz{{Q^{[\lz]}_t}}
\def\qlzo{{Q^{[\lz]}_{t_1}}}
\def\qlzt{{Q^{[\lz]}_{t_2}}}
\def\plz{{P^{[\lz]}_t}}
\def\plzo{{P^{[\lz]}_{t_1}}}
\def\plzzo{{P^{[\lz]}_{\dz_1}}}
\def\plzt{{P^{[\lz]}_{t_2}}}
\def\plzzt{{P^{[\lz]}_{\dz_2}}}
\def\wlzo{{W^{[\lz]}_{t_1}}}
\def\wlzt{{W^{[\lz]}_{t_2}}}
\def\prz{{\partial}}
\def\dmzo{{\,dm_\lz(x_1)}}
\def\dmzt{{\,dm_\lz(x_2)}}
\def\dmzy{{\,dm_\lz(y)}}
\def\dmzd{{\,d\mu_\lz(x_1,x_2)}}
\def\dmzdt{{\,d\mu_\lz(x_1,x_2)\,dt_1\,dt_2}}
\def\supd{{\sup_{\gfz{t_1>0}{t_2>0}}}}
\def\inzf{{\int_0^\fz}}
\def\dinzf{{\int_0^\fz\int_0^\fz}}
\def\inrp{{\int_{\R_+}}}
\def\dinrp{{\iint_{\R_+\times \R_+}}}
\def\dlmxt{{\frac{\dmz(x)\,dt}{t}}}
\def\dlmxto{{\frac{\dmz(x_1)\,dt_1}{t_1}}}
\def\dlmxtt{{\frac{\dmz(x_2)\,dt_2}{t_2}}}
\def\lozd{{L^1(\rlz)}}
\def\ltzd{{L^2(\rlz)}}
\def\lpzd{{L^p(\rlz)}}
\def\lrzd{{L^r(\rlz)}}
\def\lszd{{L^s(\rlz)}}
\def\horiz{{H^1_{Riesz}(\rlz)}}
\def\hpmz{{H^p_{\cm}(\rlz)}}
\def\hap{{H^p_{\Delta_\lz}(\rlz)}}
\def\hsu{{H^p_{S_u}(\rlz)}}
\def\hnz{{H^p_{\cn_{h}}(\rlz)}}
\def\hrz{{H^p_{\crz_{h}}(\rlz)}}
\def\hrp{{H^p_{\crz_{P}}(\rlz)}}
\def\hnp{{H^p_{\cn_{P}}(\rlz)}}
\def\slz{{\sharp_\lz}}
\def\pplz{t\partial_tP_t^{[\lambda]}}
\def\ls{\lesssim}
\def\gs{\gtrsim}
\def\tbz{{\triangle_\lz}}
\def\dmz{{dm_\lz}}
\def\tlz{{\tau^{[\lz]}_x}}
\def\riz{{R_{\Delta_\lz}}}
\def\slz{{\sharp_\lz}}
\def\qlz{{Q^{[\lz]}_t}}
\def\plz{{P^{[\lz]}_t}}
\def\ltz{{L^2(\rr_+,\, dm_\lz)}}
\def\lrz{{L^r(\rr_+,\, dm_\lz)}}
\def\loz{{L^1(\rr_+,\, dm_\lz)}}
\def\lpz{{L^p(\rr_+,\, dm_\lz)}}
\def\hpz{{H^p(\rr_+,\, dm_\lz)}}
\def\dint{\displaystyle\int}
\def\dlimsup{\displaystyle\limsup}
\def\dfrac{\displaystyle\frac}
\def\dsup{\displaystyle\sup}
\def\dlim{\displaystyle\lim}
\def\r{\right}
\def\lf{\left}
\def\noz{\nonumber}
\def\rlz{\R_\lz}
\def\XXint#1#2#3{{\setbox0=\hbox{$#1{#2#3}{\int}$}
     \vcenter{\hbox{$#2#3$}}\kern-.5\wd0}}
\begin{document}

\title[Product Hardy spaces in the Bessel setting]{Characterizations of product Hardy spaces \\ in Bessel setting}

\author{Xuan Thinh Duong}
\address{Xuan Thinh Duong, Department of Mathematics\\
         Macquarie University\\
         NSW 2019\\
         Australia
         }
\email{xuan.duong@mq.edu.au}

\author{Ji Li}
\address{Ji Li, Department of Mathematics\\
         Macquarie University\\
         NSW 2019\\
         Australia
         }
\email{ji.li@mq.edu.au}

%
%

\author{Brett D. Wick}
\address{Brett D. Wick, Department of Mathematics\\
         Washington University -- St. Louis\\
         St. Louis, MO 63130-4899 USA
         }
\email{wick@math.wustl.edu}

\author{Dongyong Yang}
\address{Dongyong Yang, School of Mathematical Sciences\\
 Xiamen University\\
  Xiamen 361005,  China
  }
\email{dyyang@xmu.edu.cn }



\subjclass[2010]{42B35, 42B25, 42B30, 30L99}

\date{\today}


\keywords{Bessel operator, maximal function, Littlewood--Paley theory, Riesz transform, Cauchy--Riemann type equations, product Hardy space, product BMO space}

\begin{abstract}
In this paper, we work in the setting of Bessel operators and Bessel Laplace equations studied by Weinstein, Huber, and the harmonic function theory in this setting introduced by Muckenhoupt--Stein, especially the generalised Cauchy--Riemann equations and the conjugate harmonic functions.  We provide the equivalent characterizations of product Hardy spaces associated with Bessel operators in terms of the Bessel Riesz transforms, non-tangential and radial maximal functions defined via Poisson and heat semigroups, based on the atomic decomposition, the extension of Merryfield's result which connects the product non-tangential maximal function and area function, and on the grand maximal function technique which connects the product non-tangential and radial maximal function.   We then obtain directly the decomposition of the product BMO space associated with Bessel operators. These results are a first extension for product Hardy and BMO associated to a differential operator other than the Laplacian and are a major step beyond the Chang--Fefferman setting.
\end{abstract}

\maketitle




\section{Introduction and statement of main results}
\label{sec:introduction}
\setcounter{equation}{0}


\subsection{Background and main results}

Multiparameter harmonic analysis was introduced in the '70s and studied extensively in the '80s, led by S.-Y. A. Chang, R. Fefferman, R. Gundy, J. Journ\'e, J. Pipher, E. Stein and others (see for example \cite{Cha,GS,CF1,Fef,CF2,FSt,CF3,Jo,KM,F2,F1,J,P,F3,F4}).  The theory of multiparameter harmonic analysis is largely influenced by the corresponding theory of one-parameter (classical) harmonic analysis, but is strongly motivated by two different geometric phenomena.  First, one naturally encounters families of rectangles and operators that are invariant under different scalings than the standard one (e.g, the operator is invariant under a scaling in each variable separately and not just a uniform scaling of all the variables).  Second, the boundary behavior of analytic functions in several complex variables necessitated an understanding of approach regions that behaved differently in each variable separately.  Both these naturally lead to the theory of harmonic analysis allowing for a decomposition of functions admitting different, independent behavior in each variable separately.

As in classical harmonic analysis a key ingredient in the theory is the development of the Hardy and BMO spaces, their duality and the connections to atomic decompositions.  In the multiparameter setting the geometry alluded to above leads to a more complicated description of product BMO.  As demonstrated by Carleson the natural BMO condition on rectangles is not sufficient to characterize the dual of the Hardy space.  This necessitates a BMO theory based on arbitrary open sets and leads to numerous geometric challenges in the theory.  An important result in the area is Journ\'e's covering lemma which provides a tool by which the general open sets can be replaced by certain families of rectangles with controlled geometry.  After this important ground work was established, the development of multiparameter harmonic analysis followed the lines of obtaining $T1$ theorems, characterizations of Hardy spaces via non-tangential and radial maximal functions, and Hilbert (Riesz) transforms.  More recent developments in the area include the dyadic structures, characterization of product BMO via commutators (see for example \cite{fs,fl,lppw,lt,lppw2,T,DO,LPW,ops,KLPW}), and the developments of $T1$ and $Tb$ theorems (see for example \cite{HyM,Ou}).  As is well known, the product Hardy space $H^1 $ has a variety of equivalent norms, in terms of square functions, maximal functions and Hilbert transforms (Riesz transforms in higher dimension), see for example \cite[p.\,19]{l}. See also the product Hardy spaces and boundedness of product singular integrals in different versions studied in \cite{hy,cyz,blyz,lby,lbyz}.



Spaces of homogeneous type were first introduced by R. Coifman and G. Weiss \cite{CW} in the 70's in order to extend the theory of Calder\'on--Zygmund operators to a more general setting. There are no translations or dilations, and no analogue of the Fourier transform or convolution operation on such spaces. Using Coifman's idea on the decomposition of the identity operator, G. David, J. Journ\'e, and S. Semmes \cite{DJS} developed Littlewood-Paley analysis on spaces of homogeneous type and used it to give a proof of the $T1$ theorem on this general setting. Recently, based on this Littlewood-Paley analysis, Han, the second author, and Lu \cite{HLL2} developed the product Hardy spaces $H^p(X_1\times X_2)$ for $p\leq1$ and close to 1 on product spaces of homogeneous type $X_1\times X_2$ via Littlewood--Paley square function, and proved the duality of $H^p$ with the Carleson measure type spaces $CMO^p$, see also the related results in \cite{HLL} and \cite{HLW}. Later, the boundedness of singular integrals,
the product $T1 $ theorem, and atomic decomposition of $H^p$ were also studied in \cite{HLLW,LW,HLLin,HLPW}.



The theory of the classical Hardy space is intimately connected to the Laplacian; changing the differential operator introduces new challenges and directions to explore.
In the past 10 years, a theory of Hardy spaces associated to operators was introduced and developed by P. Auscher, the first author, S. Hofmann, A. McIntosh, L. Yan and many others (we refer to \cite{DY1,DY2,HM,HLMMY} and the references therein). In \cite{DSTY} they first introduced the product Hardy space on the Euclidean setting associated with operators via area functions, and the product BMO space via Carleson measures and proved the duality. We also refer to \cite{DLY,STY} for the product Hardy spaces on the Euclidean setting associated with operators for the atomic decomposition.
%
Recently, P. Chen, L. Ward,  L. Yan and the first and second authors \cite{CDLWY} developed the product Hardy spaces $H^1_{L_1,\,L_2}(X_1\times
X_2)$ on $X_1\times X_2$ (the product spaces of homogeneous
type) associated with operators via Littlewood--Paley area functions and atomic decompositions, and studied the boundedness of product singular integrals with non-smooth kernels,  the Calder\'on--Zygmund decomposition and interpolations of  $H^p$,  as well as the boundedness of Marcinkiewicz type multipliers. Here $L_1$ and $L_2$ are two non-negative
self-adjoint operators acting on $L^2(X_1)$ and $L^2(X_2)$,
respectively, and satisfying Davies--Gaffney
estimates. 
However, the weak conditions on $L_1$ and $L_2$ seem not strong enough for obtaining the characterizations of product space $H^1_{L_1,\,L_2}(X_1\times X_2)$ via maximal function or  via the ``Riesz transforms'' and  the decomposition of product BMO space in this setting is not known either.

In 1965, B. Muckenhoupt and E. Stein in \cite{ms} introduced the harmonic function theory associated with Bessel operator $\tbz$, defined by
setting for suitable functions $f$,
\begin{equation*}
\tbz f(x):=\frac{d^2}{dx^2}f(x)+\frac{2\lz}{x}\frac{d}{dx}f(x),\quad \lz>0,\quad x\in \R_+:=(0,\fz).
\end{equation*}
The related elliptic partial differential equation is the following ``singular Laplace equation''
\begin{equation}\label{bessel laplace equation}
\triangle_{t,\,x} (u) :=\pa_{t}^2u + \pa_{x}^2u+\frac{2\lz}{x}\pa_{x}u=0
\end{equation}
studied by A. Weinstein \cite{w}, and A. Huber \cite{Hu} in higher dimension, where they considered the generalised axially symmetric potentials, and obtained the properties of the solutions of this equation, such as the extension, the uniqueness theorem, and the boundary value problem for certain domains.

If $u$ is a solution of \eqref{bessel laplace equation} then $u$ is said to be $\lambda$-harmonic. The function $u$ and the conjugate of $u$ (denoted by $v$) satisfy the following Cauchy--Riemann type equations
\begin{align}\label{CR}
\pa_{x}u=-\pa_{t}v\ \ {\rm and\ \ }
\pa_{t}u =\pa_{x}v + {2\lambda\over x} v\ \ {\rm in\ \ }\mathbb{R}_+\times \R_+.
\end{align}

In \cite{ms} they developed a theory in the setting of
$\tbz$ which parallels the classical one associated to the standard Laplacian, where results on $\lpz$-boundedness of conjugate
functions and fractional integrals associated with $\tbz$ were
obtained for $p\in[1, \fz)$ and $\dmz(x):= x^{2\lz}\,dx$.

We also point out that Haimo \cite{h} studied the Hankel convolution transforms $\vz \sharp_\lz f$ associated with the Hankel transform in the Bessel setting systematically,
which provides a parallel theory to the classical convolution and Fourier transforms. It is well-known that
the Poisson integral of $f$ studied in \cite{ms} is the Hankel convolution of Poisson kernel with $f$, see \cite{bdt}.

Since then, many problems based on the Bessel context were studied, such as the boundedness of Bessel Riesz transform,
Littlewood--Paley functions, Hardy and BMO spaces associated with Bessel operators, $A_p$ weights associated with Bessel operators
(see, for example, \cite{k78,ak,bfbmt,v08,bfs,bhnv,bcfr,yy,dlwy,DLMWY} and the references therein).


\vskip.15cm

The aim of this paper is to focus on this specific Bessel setting, to study the equivalent characterizations of product Hardy spaces and the decomposition of product BMO spaces associated with Bessel operator $\tbz$.

We note that the measure $\dmz$ related to $\tbz$ is a doubling measure, and hence the standard product Hardy spaces via Littlewood--Paley area functions and via atoms fall into the line of \cite{HLL2,HLPW}, see Section \ref{s2}.  Also, the kernels of the Poisson and heat semigroups of $\tbz$ satisfy the size, smoothness and conservation property, and hence the product Hardy spaces associated with $\tbz$ via Littlewood--Paley area functions and via atoms fall into the line of \cite{CDLWY}. The first part of this paper is  to prove that these two versions of Hardy spaces coincide in this Bessel setting, denoted by $H^p_{\Delta_\lambda}$, $p\in ((2\lz+1)/(2\lz+2), 1]$. We also provide the equivalent characterization via Littlewood--Paley $g$-functions. Based on this, we obtain that the dual of $H^p_{\Delta_\lambda}$ is the standard Carleson measure spaces studied in \cite{HLL2}, denoted by ${\rm CMO}^p_{\Delta_\lambda}$ (by ${\rm BMO}_{\Delta_\lambda}$ when $p=1$).

 Then the second part, which is the main contribution of this paper, is to provide the equivalent characterizations of  $H^p_{\Delta_\lambda}$ in terms of non-tangential and radial maximal function defined via the  Poisson and heat semigroups, as well as the characterization via Bessel Riesz transforms. To obtain this, we build up a variant of the technical lemma of K. Merryfield \cite{KM}, which connects the product non-tangential maximal function and the area function, and we make good use of the generalised Cauchy--Riemann type equations \eqref{CR} and  establish the grand maximal function which connects the non-tangential and radial maximal functions. Then, as a direct consequence, we obtain the decomposition of ${\rm BMO}_{\Delta_\lambda}$ via the Bessel Riesz transforms. We note that these results in the second part are  first extensions for product Hardy and BMO spaces beyond the Chang--Fefferman setting on Euclidean spaces.



\subsection{Statement of main results}
Throughout the paper, for every interval $I\subset \R_+$, we denote it by $I:=I(x,t):= (x-t,x+t)\cap \R_+$. The measure of $I$ is defined as
$m_\lz(I(x,t)):=\int_{I(x,\,t)} x^{2\lz} dx$. In the product setting $\mathbb{R}_+\times \mathbb{R}_+$, we define $d\mu_\lz(x_1, x_2):=dm_\lz(x_1)\times dm_\lz(x_2)$ and
$ \rlz:= (\mathbb{R}_+\times \mathbb{R}_+,d\mu_\lz(x_1, x_2)).$ We work with the domain $( \mathbb{R}_+\times \mathbb{R}_+)
\times ( \mathbb{R}_+\times \mathbb{R}_+)$ and its distinguished
boundary ${ \mathbb{R}_+\times  \mathbb{R}_+}$. For $x := (x_1,x_2)\in { \mathbb{R}_+\times
 \mathbb{R}_+}$, denote by $\Gamma(x)$ the product cone $\Gamma(x) :=
\Gamma_1(x_1)\times\Gamma_2(x_2)$, where $\Gamma_i(x_i) :=
\{(y_i,t_i)\in \mathbb{R}_+\times \mathbb{R}_+: |x_i-y_i| < t_i\}$
for $i := 1$, 2.

%
%
%
%
%
%
We now provide several definitions of
$H^p_{\Delta_\lambda}$, $p\in ((2\lz+1)/(2\lz+2), 1]$.  These spaces all end up being the same, which is one of the main results in this paper.   This requires some additional notation, but the careful reader will notice that the spaces are distinguished notationally by a subscript to remind how they are defined.




Following \cite{CDLWY}, we define the product Hardy spaces associated with the Bessel operator $\tbz$ using the Littlewood--Paley area functions and square functions via the semigroups $\{T_t\}_{t>0}$, where $\{T_t\}_{t>0}$  can be the Poisson semigroup $\{e^{-t\sqrt{\tbz}}\}_{t>0}$ or the heat semigroup $\{e^{-t\tbz}\}_{t>0}$.

Given a function $f$ on $L^2(\rlz)$, the Littlewood--Paley {area
function}~$Sf(x)$, $x:=(x_1,x_2)\in \R_+\times\R_+$, associated with the operator $\Delta_\lambda$ is
defined as
\begin{align}\label{esf}
     Sf(x)
    := \bigg(\iint_{\Gamma(x) }\Big|
        t_1\pa_{t_1}T_{t_1}\,  t_2\pa_{t_2}T_{t_2}  f(y_1,y_2)\Big|^2\
        {d\mu_\lambda(y_1,y_2) dt_1  dt_2 \over t_1m_\lz(I(x_1,t_1)) t_2 m_\lz(I(x_2,t_2))}\bigg)^{1\over2}.
   \end{align}
The square
function~$g(f)(x)$, $x:=(x_1,x_2)\in \R_+\times\R_+$, associated with the operator $\Delta_\lambda$ is
defined as
\begin{align}\label{egf}
     g(f)(x)
    := \bigg(\int_{0}^\infty\int_{0}^\infty\Big|
        t_1\pa_{t_1}T_{t_1}\,  t_2\pa_{t_2}T_{t_2}  f(x_1,x_2)\Big|^2\
        { dt_1  dt_2 \over t_1 t_2 }\bigg)^{1\over2}.
   \end{align}





We now define the product Hardy space $H^p_{\Delta_\lz}$ by using \eqref{esf} via Poisson semigroup as follows.
\begin{defn}\label{def of Hardy space via S function}
For $p\in ((2\lz+1)/(2\lz+2), 1]$, the
    {Hardy space $H^p_{\Delta_\lambda}( \rlz )$} is defined as the completion of
    \[
        \{f\in L^2(\rlz) :
        \|Sf\|_{L^p(\rlz)} < \infty\}
    \]
    with respect to the norm (quasi-norm)
    $
        \|f\|_{H^{p}_{\Delta_\lambda}(\rlz ) }
        := \|Sf \|_{L^p( \rlz)},
    $
    where $Sf$ is defined by \eqref{esf} with $T_t:=e^{-t\sqrt{\tbz}}$.

\end{defn}

Our first main result is to show that $H^{p}_{\Delta_\lambda}(\rlz) $ coincide with $H^p(\rlz)$ as in \cite{HLL2}; see Definition \ref{def-of-Hardy-space on M new metric} below.

\begin{thm}\label{thm H1 and classical H1}
 Let $p\in ((2\lz+1)/(2\lz+2), 1]$. The space
$H^{p}_{\Delta_\lambda}(\rlz) $ coincides with the classical product Hardy space $H^{p}(\rlz)$ and they have equivalent norms (or quasi-norms).
\end{thm}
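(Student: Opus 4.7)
The plan is to establish the equivalence via comparison of atomic decompositions. Since the Bessel Poisson and heat kernels satisfy the required size, smoothness and conservation properties, the Hardy space $H^p_{\Delta_\lambda}(\rlz)$ falls into the framework of \cite{CDLWY} and thus admits a $(p,2)$-atomic decomposition in which the atoms are constructed from the semigroup $\{e^{-t\sqrt{\tbz}}\}_{t>0}$. On the other hand, since $(\mathbb{R}_+,dm_\lambda)$ is a doubling metric measure space, the classical product Hardy space $H^p(\rlz)$ of \cite{HLL2} admits the Coifman--David--Journ\'e--Semmes atomic decomposition built from a wavelet-type Littlewood--Paley analysis on spaces of homogeneous type. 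In both frameworks, atoms are supported on open sets $\Omega$ of finite $\mu_\lambda$-measure, are $L^2$-normalized by a suitable power of $\mu_\lambda(\Omega)$, and satisfy cancellation conditions adapted to the respective Littlewood--Paley operators. The proof consists of showing that any atom from one theory has uniformly bounded norm in the other Hardy space, so that the atomic decompositions may be passed through in both directions.

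For the inclusion $H^p_{\Delta_\lambda}(\rlz)\subseteq H^p(\rlz)$, I would fix a $\Delta_\lambda$-atom $a$ arising from the decomposition of \cite{CDLWY} and prove $\|a\|_{H^p(\rlz)}\lesssim 1$ by estimating the classical area function $S^{\mathrm{cl}}(a)$ in $L^p(\rlz)$. The integration region is split into a suitable enlargement $\widetilde{\Omega}$ of the support of $a$ and its complement. On $\widetilde{\Omega}$ the bound follows from the $L^2$-boundedness of $S^{\mathrm{cl}}$, H\"older's inequality with exponent $2/p$, and the atomic $L^2$-normalization. On the complement, I would exploit the cancellation of $a$ (expressed through the $\tbz$ semigroup) against the kernels of $D_{k_1}\otimes D_{k_2}$, yielding decay via almost-orthogonality that uses both the size and H\"older bounds of the Bessel semigroup kernel and the conservation property $e^{-t\sqrt{\tbz}}(1)=1$. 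Journ\'e's covering lemma in the doubling setting, as adapted in \cite{HLL2,CDLWY}, provides the geometric control required to pass between arbitrary and dyadic rectangles inside $\Omega$.

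The reverse inclusion $H^p(\rlz)\subseteq H^p_{\Delta_\lambda}(\rlz)$ is handled symmetrically: given a Coifman--David--Journ\'e--Semmes atom $a$, I would show $\|Sa\|_{L^p(\rlz)}\lesssim 1$. The near-region estimate is immediate from the $L^2$-boundedness of $S$, which comes from the spectral theorem for the nonnegative self-adjoint operator $\tbz$ (and its tensorization). The far-region estimate again relies on an almost-orthogonality argument, this time exploiting the cancellation of the classical atom against the smoothness of the kernel of $t\partial_t e^{-t\sqrt{\tbz}}$, and summing geometrically over dyadic scales in both parameters.

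The principal obstacle is quantifying the almost-orthogonality between the two different Littlewood--Paley operators in the full product setting. In each parameter separately, this is a textbook comparison of cancellative integral operators whose kernels satisfy matching size and smoothness bounds. In the product setting, however, one must obtain simultaneous decay in both spatial scales while preserving enough quantitative control to sum over the atomic decomposition in $\ell^p$, which for $p\le 1$ is delicate. Once the product almost-orthogonality estimates are in place, the discrete Calder\'on reproducing formula (available in both theories) allows the sums to be recombined, and the atomic decomposition yields the desired norm equivalence.
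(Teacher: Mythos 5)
Your proposal is correct and is essentially the paper's own argument: after reducing by density to showing $L^2(\rlz)\cap H^p_{\Delta_\lambda}(\rlz)=L^2(\rlz)\cap H^p(\rlz)$, both inclusions are run through atomic decompositions, namely the operator-adapted decomposition in the spirit of \cite{CDLWY} (made to work for $p<1$, with support and exact vanishing moments coming from the compactly supported kernel of $\psi(t\sqrt{\tbz})$ and the conservation property $e^{-t\sqrt{\tbz}}1=1$) in one direction, and the uniform $L^p$ bound of the area function $S$ on classical product atoms (via $L^2$-boundedness, the kernel conditions ${\rm (K_i)}$--${\rm (K_{iii})}$, and Journ\'e's lemma) in the other. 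The only difference in routing is in the inclusion $H^p_{\Delta_\lambda}(\rlz)\subseteq H^p(\rlz)$: the paper checks that the atoms produced by this construction are literally atoms in the sense of Definition \ref{def-of-p q atom} and then invokes the atomic characterization of $H^p(\rlz)$ (Theorem \ref{theorem-Hp atom decomp}, from \cite{HLPW}) directly, whereas you re-derive that step by estimating the classical square function of each operator-adapted atom.
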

As a direct consequence, we have that
the dual of $H^{p}_{\Delta_\lambda}(\rlz)$, denoted by ${\rm CMO}^p_{\Delta_\lambda}(\rlz)$, is the classical product Carleson measure space ${\rm CMO}^p(\rlz)$, which is introduced in \cite{HLL2} (see the precise definition in Section 2). Especially, for $p=1$, we denote the dual of $H^{1}_{\Delta_\lambda}(\rlz)$ by ${\rm BMO}_{\Delta_\lambda}(\rlz)$.

\begin{remark}\label{remark}
We note that we can also define the product Hardy spaces $H^p_{\Delta_\lz}( \rlz )$ as in Definition \ref{def of Hardy space via S function} using the area function $Sf$ via $T_t:=e^{-t\tbz}$, as well as using the square function
$g(f)$ as in \eqref{egf} via $T_t:=e^{-t\sqrt{\tbz}}$ or $T_t:=e^{-t\tbz}$, denoted by $H^p_{\Delta_\lz,\,1}( \rlz )$, $H^p_{\Delta_\lz,\,2}( \rlz )$ and $H^p_{\Delta_\lz,\,3}( \rlz )$, respectively. These three versions of product Hardy spaces coincide with $H^{p}(\rlz)$ and they have equivalent norms (or quasi-norms). See Proposition~\ref{prop eee} below.
%
\end{remark}

We now define another version of the Littlewood--Paley area function.
Let
$$\nabla_{t_1,\,y_1}:=(\partial_{t_1}, \partial_{y_1}),\,\, \nabla_{t_2,\,y_2}:=(\partial_{t_2}, \partial_{y_2}).$$
%
Then  the Littlewood--Paley area function $S_uf(x)$ for $f \in L^2(\rlz)$,
$x:=(x_1,x_2)\in \R_+\times\R_+$  is defined as
\begin{align}\label{esfu}
     S_uf(x)
    := \bigg(\iint_{\Gamma(x) }\big| \nabla_{t_1,\,y_1} e^{-t_1\sqrt{\tbz}}\nabla_{t_2,\,y_2}e^{-t_2\sqrt{\tbz}}(f)(y_1,y_2) \big|^2 \
        \frac{t_1t_2\ d\mu_\lambda(y_1,y_2) dt_1dt_2}{m_\lz(I(x_1,t_1)) m_\lz(I(x_2,t_2))}\bigg)^{1\over2}.
\end{align}
Then naturally we have the following definition of the product Hardy space via the Littlewood--Paley area function $S_uf$.
\begin{defn} \label{def of Hardy space via Su function}
%
For $p\in ((2\lz+1)/(2\lz+2), 1]$, the
    {Hardy space $\hsu $ } is defined as the completion of
    \[
        \{f\in L^2(\rlz ) :
        \|S_uf\|_{L^p(\rlz)} < \infty\}
    \]
    with respect to the norm (quasi-norm)
    $
        \|f\|_{H^{p}_{S_u}( \rlz ) }
        := \|S_uf \|_{L^p(\rlz )}.
    $
\end{defn}

Next we define the product non-tangential and radial maximal functions via heat semigroup and Poisson semigroup  associated to $\Delta_\lz$, respectively.
For all $\az\in (0, \fz)$, $p\in[1,\fz)$,
$f\in\lpzd$ and $x_1,x_2\in \R_+$, let
\begin{align*}
&\cn^\az_{h}f(x_1, x_2):=\!\!\sup_{\gfz{|y_1-x_1|<\az t_1}{|y_2-x_2|<\az t_2}}\!\lf|e^{-t_1\tbz}e^{-t_2\tbz} f(y_1, y_2)\r|,\\
&\cn^\az_{P}f(x_1, x_2):=\!\!\sup_{\gfz{|y_1-x_1|<\az t_1}{|y_2-x_2|<\az t_2}}\!\lf|e^{-t_1\sqrt{\tbz}}e^{-t_2\sqrt{\tbz}} f(y_1, y_2)\r|
\end{align*}
be the product non-tangential maximal functions with aperture $\alpha$ via the heat semigroup and Poisson semigroup  associated to $\Delta_\lz$, respectively. Denote $\cn^1_{h}f$ by $\cn_{h}f$ and $\cn^1_{P}f$ by $\cn_{P}f$.  Moreover let
\begin{align*}
&\crz_{h}f(x_1, x_2):=\sup_{t_1>0,\,t_2>0}\lf|e^{-t_1\tbz}e^{-t_2\tbz} f(x_1, x_2)\r|,\\
&\crz_{P}f(x_1, x_2):=\sup_{t_1>0,\,t_2>0}\lf|e^{-t_1\sqrt{\tbz}}e^{-t_2\sqrt{\tbz}} f(x_1,x_2)\r|
\end{align*}
be the product radial maximal functions via the heat semigroup and Poisson semigroup  associated to $\Delta_\lz$, respectively.
%
%

\begin{defn}\label{defn:maximal funct}
%
The
    {Hardy space $\hpmz$}, $p\in ((2\lz+1)/(2\lz+2), 1]$, associated to the maximal function ${ \cm}f$ is defined as the completion of the set
    \[
        \{f\in L^2(\rlz ) :
        \|{ \cm}f\|_{L^p(\rlz)} < \infty\}
    \]
    with the norm (quasi-norm)
    $
        \|f\|_{\hpmz }
        := \|{ \cm}f \|_{L^p(\rlz )}.
    $
Here ${ \cm}f$ is one of the following maximal functions: 
$\cn_{h}f$, $\cn_{P}f$, $\crz_{h}f$ and $\crz_{P}f$.
\end{defn}

Based on our first main result Theorem \ref{thm H1 and classical H1},  the second main result of this paper is as follows.

\begin{thm}\label{thm: char of Hardy spacs by max func}
 Let $p\in ((2\lz+1)/(2\lz+2), 1]$. The product Hardy spaces
$\hap$, $\hsu $, $\hnz,$
$\hrz$, $\hrp$
 and $\hnp$
coincide and have equivalent norms (or quasi-norms).
\end{thm}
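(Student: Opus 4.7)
The plan is to close, for each of the four maximal-function spaces, the cycle $\hap\hookrightarrow\hpmz\hookrightarrow\hap$, and to sandwich $\hsu$ inside. By Theorem~\ref{thm H1 and classical H1}, $\hap$ already coincides with the classical product Hardy space of \cite{HLL2,HLPW} and hence admits an atomic decomposition. The easy half $\hap\hookrightarrow\hpmz$ I would obtain by proving that every $(p,2)$-atom $a$ satisfies $\|{\cm}a\|_{L^p(\rlz)}\lesssim 1$ uniformly, for each $\cm\in\{\cn_h,\cn_P,\crz_h,\crz_P\}$, by combining the size and H\"older regularity of the Bessel heat and Poisson kernels (which follow from Haimo's Hankel convolution calculus and the subordination formula) with a Journ\'e-type covering over the support of the atom, and then reducing to single-parameter maximal bounds variable by variable.

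For the reverse direction I would first establish $\hap\sim\hsu$. The bound $\|Sf\|_{L^p}\lesssim\|S_uf\|_{L^p}$ is immediate after balancing the $t_i^{\pm 1}$ weights in the two definitions, since the integrand of $Sf$ is exactly the pure time-derivative component of that of $S_u f$. For the reverse I would invoke the generalised Cauchy--Riemann system \eqref{CR} variable by variable on the biharmonic extension $u=e^{-t_1\sqrt{\tbz}}e^{-t_2\sqrt{\tbz}}f$, trading each spatial derivative $\partial_{y_i}u$ for a time derivative of its $\lambda$-conjugate, whose $L^p$ tent-space norm is again dominated by that of $Sf$ via $L^p$-boundedness of the Bessel conjugate Poisson operator.

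The decisive analytic input is the product Bessel Merryfield-type lemma
\[
\|S_u f\|_{L^p(\rlz)}\lesssim \|\cn_P f\|_{L^p(\rlz)},
\]
proved by a double integration by parts on truncated product cones. Variable by variable, $u$ solves $\triangle_{t_i,y_i}u=0$, so $|\nabla_{t_1,y_1}\nabla_{t_2,y_2}u|^2$ can be rewritten as the divergence of an explicit vector field in $(t_1,y_1,t_2,y_2)$ plus a first-order error coming from the $(2\lambda/y_i)\partial_{y_i}$ correction in $\tbz$; Stokes applied to the divergence on product tents produces a boundary integral controlled pointwise by $\cn_P f$, while the error is absorbed by a Carleson-measure estimate exploiting the doubling of $d\mu_\lambda$. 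To pass from the radial to the non-tangential maximal function I would introduce a product grand maximal function $\cg f(x_1,x_2):=\sup|\phi^{(1)}_{t_1}\sharp_\lambda\phi^{(2)}_{t_2}\sharp_\lambda f(x_1,x_2)|$, the supremum over admissible pairs of Hankel convolvers in each variable satisfying separated size/smoothness/decay conditions and chosen so that the class contains both the Bessel Poisson and Bessel heat kernels. A two-parameter reproducing-formula argument then yields the pointwise sandwich $\cn_P f\lesssim\cg f\lesssim\crz_P f$ (and the analogue with $P$ replaced by $h$), hence $\|\cn_P f\|_p\lesssim \|\crz_P f\|_p$ and similarly for the heat semigroup.

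Chaining these ingredients,
\[
\|f\|_{\hap}\sim \|Sf\|_p\sim \|S_u f\|_p\lesssim\|\cn_P f\|_p\lesssim\|\crz_P f\|_p\lesssim\|f\|_{\hap},
\]
together with the analogous chain with $P$ replaced by $h$, obtained from the subordination formula $e^{-t\sqrt{\tbz}}=\int_0^\infty \pi^{-1/2}\,t\,(4s)^{-3/2}e^{-t^2/(4s)}e^{-s\tbz}\,ds$ that delivers the pointwise domination of the Poisson objects by the heat objects in each variable, yields the equivalence of all six norms on the dense subclass $\{f\in L^2(\rlz):\|Sf\|_{L^p}<\infty\}$ and hence on their completions. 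The hardest step will be the product Merryfield-type inequality: the Bessel first-order correction $(2\lambda/y)\partial_y$ destroys the clean Euclidean divergence identity, and its error must be controlled simultaneously in both parameters and uniformly over the full range $p\in((2\lz+1)/(2\lz+2),1]$. A secondary difficulty is the design of the product grand maximal class, which must be rich enough to contain both the Bessel Poisson and heat kernels yet restrictive enough that a genuine two-parameter reproducing formula controls it pointwise by $\crz f$.
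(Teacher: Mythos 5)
Your architecture coincides with the paper's: atoms plus Journ\'e's covering lemma for the easy containment $\hap\hookrightarrow\hpmz$, a Merryfield-type lemma for $\|S_uf\|_p\lesssim\|\cn_Pf\|_p$, a grand maximal function to pass from $\cn_P$ to $\crz_P$, and subordination to pass from Poisson to heat. But two of the steps, as you state them, would fail. The claimed \emph{pointwise} bound $\cg f\lesssim\crz_Pf$ is false: a radial maximal function only samples the semigroup at the point $(x_1,x_2)$ itself, so no grand (or non-tangential) maximal function can be dominated by it pointwise, already in the classical one-parameter setting. What the two-parameter reproducing formula and almost-orthogonality actually give is
\begin{equation*}
\cg f(x_1,x_2)\lesssim\big\{\mathcal{M}_1\mathcal{M}_2\big(|\crz_Pf|^r\big)(x_1,x_2)\big\}^{1/r}
\qquad\text{for } \tfrac{2\lz+1}{2\lz+2}<r<p,
\end{equation*}
after which one takes $L^p$ norms and uses the $L^{p/r}$-boundedness of the iterated Hardy--Littlewood maximal operators; the constraint $r>(2\lz+1)/(2\lz+2)$ is precisely where the lower bound on $p$ enters, so the correction is not cosmetic.

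For the Merryfield step, applying Stokes ``on truncated product cones/tents'' and reading off a boundary term controlled by $\cn_Pf$ runs into the standard product obstruction: the level sets $\{\cn_Pf>\alpha\}$ are arbitrary open sets, and the union of product tents over such a set has no usable boundary. The workable device is to avoid tents altogether: multiply $|\nabla_{t_1,y_1}\nabla_{t_2,y_2}u|^2$ by $|\phi_{t_1}\phi_{t_2}\sharp_{\lz,\,1,\,2}\,g|^2\,t_1t_2$ with $g=\chi_{\{\cn_Pf\le\alpha\}}$, integrate by parts over the entire product half-space, and --- this is the crux you leave unaddressed --- construct for the given $\phi$ a companion kernel $\psi(t,x,y)$ supported in $\{|x-y|<t\}$ and solving the Bessel Cauchy--Riemann equation $\partial_t(\phi_t\sharp_\lz g)=\partial_x\psi(g)+\frac{2\lz}{x}\psi(g)$, so that every term produced by the integration by parts is either $f^2g^2$ or $|u|^2$ times a square-function piece supported where $g\ne0$ (hence where $|u|\le\alpha$); the resulting distributional inequality is then summed over $\alpha$. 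Without this construction the first-order error coming from $\frac{2\lz}{y}\partial_y$ is not of Carleson type and the argument does not close. Finally, your side claim that $\|S_uf\|_p\lesssim\|Sf\|_p$ follows from ``$L^p$-boundedness of the Bessel conjugate Poisson operator'' is unjustified for $p\le1$ (the conjugate operator is not $L^p$-bounded there); fortunately that inequality is never needed once the loop is closed the way both you and the paper close it.
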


Next we consider the definition of product Hardy space via the Bessel Riesz transforms $\rizo(f)$ and $\rizt(f)$ on the first and second variable, respectively. For the definition of Bessel Riesz transforms, we refer to \eqref{riz} in Section 2.2.
%

\begin{defn} \label{def of Hardy space via riesz}
 The {product Hardy space $\horiz$ } is defined as the completion of
\begin{eqnarray}\label{Hardy space riesz}
&&\lf\{f\in\lozd\cap L^2(\rlz): \,\, \rizo f,\ \rizt f,\ \rizo\rizt f\in\lozd\r\}\noz
\end{eqnarray}
endowed with the norm
\begin{eqnarray*}
\|f\|_{\horiz}:=\|f\|_\lozd+\|\rizo f\|_\lozd +\|\rizt f\|_\lozd +\|\rizo\rizt f\|_\lozd.
\end{eqnarray*}
\end{defn}

Then based on our result in Theorem \ref{thm: char of Hardy spacs by max func}, the third main result of the paper is the following characterization of $H^{1}_{\Delta_\lambda}( \rlz ) $.

\begin{thm}\label{thm H1 riesz characterization}
The  product Hardy spaces
$H^{1}_{\Delta_\lambda}(\rlz)$ and $H^1_{Riesz}(\rlz)$
 are equivalent.
\end{thm}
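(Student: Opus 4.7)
The plan is to use Theorem~\ref{thm: char of Hardy spacs by max func} to reduce $H^{1}_{\Delta_\lambda}(\rlz)$ to any convenient model (e.g.\ $\hnpo$), and then establish the two containments $H^1_{Riesz}(\rlz)\subseteq \hnpo$ and $H^{1}_{\Delta_\lambda}(\rlz)\subseteq H^1_{Riesz}(\rlz)$ separately. The architecture follows the classical Gundy--Stein/R.~Fefferman scheme for the Euclidean product Hardy space, adapted to the Bessel Laplace equation \eqref{bessel laplace equation} and the Cauchy--Riemann system \eqref{CR}.

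For the direction $H^{1}_{\Delta_\lambda}(\rlz)\subseteq H^1_{Riesz}(\rlz)$ I would argue that $\rizo,\rizt,\rizo\rizt$ each send $H^{1}_{\Delta_\lambda}(\rlz)$ continuously into $L^1(\rlz)$. By Theorem~\ref{thm H1 and classical H1}, $H^{1}_{\Delta_\lambda}$ admits the product atomic decomposition of \cite{HLL2,HLPW}, so it is enough to test on a product $(1,2)$-atom $a$ supported in an open set $\Omega\subset\rlz$. Splitting the operator into near and far contributions relative to the Journ\'e enlargement of $\Omega$, the near part is handled by $L^2$-boundedness of the product Riesz transforms together with Cauchy--Schwarz on the rectangles underlying $\Omega$, while the far part uses the size and smoothness estimates of the Hankel-convolution Riesz kernels on $(\R_+,\dmz)$ in each variable and a Journ\'e-type covering argument in the doubling space $\rlz$. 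The $L^1$-inclusion of $f$ itself is automatic from $H^{1}_{\Delta_\lambda}\hookrightarrow L^1$.

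For the harder direction $H^1_{Riesz}(\rlz)\subseteq\hnpo$, let $f,\rizo f,\rizt f,\rizo\rizt f\in L^1(\rlz)$. Set
\begin{align*}
u(y_1,t_1;y_2,t_2):=e^{-t_1\sqrt{\tbz}}e^{-t_2\sqrt{\tbz}}f(y_1,y_2),
\end{align*}
and introduce its three partial conjugates $v^{(1)},v^{(2)},v^{(1,2)}$ by solving the Cauchy--Riemann system \eqref{CR} in the first, second, and both variables respectively; their boundary values on $\R_+\times\R_+$ are $\rizo f$, $\rizt f$, $\rizo\rizt f$. A Bessel-setting version of the Stein--Weiss lemma, applied once in each variable to the vector $F=(u,v^{(1)},v^{(2)},v^{(1,2)})$, gives that $|F|^q$ is $\lambda$-subharmonic separately in each variable for every $q\geq(2\lz+1)/(2\lz+2)$. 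Fix such a $q<1$. Iterating the subharmonic majorization in each variable dominates $|u(y_1,t_1;y_2,t_2)|$ pointwise by $\bigl(\plzo\otimes\plzt|F_0|^q\bigr)^{1/q}(y_1,y_2)$, where $F_0=(f,\rizo f,\rizt f,\rizo\rizt f)$. Taking the non-tangential supremum and comparing the Bessel Poisson integral with the strong Hardy--Littlewood maximal operator $\M_s$ on $\rlz$ yields
\begin{align*}
\cn_{P}f(x_1,x_2)\lesssim \bigl(\M_s(|F_0|^q)(x_1,x_2)\bigr)^{1/q}.
\end{align*}
Since $1/q>1$, $\M_s$ is bounded on $L^{1/q}(\rlz)$, and raising the bound to the $q$-th power produces $\|\cn_{P}f\|_{L^1(\rlz)}\lesssim \|f\|_{H^1_{Riesz}(\rlz)}$. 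Theorem~\ref{thm: char of Hardy spacs by max func} then places $f$ in $H^{1}_{\Delta_\lambda}(\rlz)$.

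The main obstacle I foresee is the first half of the second direction: establishing $\lambda$-subharmonicity of $|F|^q$ down to the sharp exponent $(2\lz+1)/(2\lz+2)$ in the Bessel setting. The singular term $2\lz/x$ in $\tbz$ forces a careful computation generalizing the Stein--Weiss identity $\Delta|F|^q=q|F|^{q-2}\bigl(|\nabla F|^2+(q-2)|\nabla|F||^2\bigr)$, and one must verify that the boundary regularity of all four components of $F$ is sufficient to iterate subharmonicity variable-by-variable against the product Poisson kernel. Once this analytic core is in place, the remaining steps (the $\M_s$ bound and the passage through $\cn_{P}$) are routine applications of the machinery already assembled for Theorems~\ref{thm H1 and classical H1} and~\ref{thm: char of Hardy spacs by max func}.
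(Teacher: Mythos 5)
Your architecture is the same as the paper's: the containment $H^{1}_{\Delta_\lambda}(\rlz)\subseteq \horiz$ via mapping properties of $\rizo$, $\rizt$, $\rizo\rizt$ (the paper simply invokes the $H^p(\rlz)\to L^p(\rlz)$ boundedness of product Calder\'on--Zygmund operators from \cite{HLLin} together with Theorem \ref{thm H1 and classical H1}, rather than redoing the atom/Journ\'e estimate you sketch), and the converse via the four conjugate bi-harmonic functions, subharmonicity, Poisson majorization, a maximal-function bound, and Theorem \ref{thm: char of Hardy spacs by max func}. The genuine gap is precisely the step you flag as your ``main obstacle'' and leave unproved: the claim that $|F|^{q}$, for the \emph{full four-vector} $F=(u,v^{(1)},v^{(2)},v^{(1,2)})$, is $\lambda$-subharmonic separately in each variable down to $q=(2\lambda+1)/(2\lambda+2)$. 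The one-parameter Muckenhoupt--Stein result (\cite[Lemma 5]{ms}) only gives subharmonicity for a \emph{pair} satisfying the Cauchy--Riemann system \eqref{CR}, and no four-component Stein--Weiss identity is established here or in the paper. The paper sidesteps it: in the $(t_1,x_1)$ variables it groups the components into the CR pairs $(u,v)$ and $(w,z)$, applies the pairwise subharmonicity to $F_1=(u^2+v^2)^{1/2}$, $F_2=(w^2+z^2)^{1/2}$ and the maximum principle of Lemma \ref{lem: subharmonic func} (from \cite[Lemma 11]{ms}) to obtain $F_i^{p}(\ez_1+t_1,\cdot)\le \plzo\big(F_i^{p}(\ez_1,\cdot)\big)$, and then uses $F\sim F_1+F_2$ together with positivity of $\plzo$ to get the majorization for $F^{p}$ itself; the same is done in $(t_2,x_2)$ with the pairs $(u,w)$, $(v,z)$, and the two one-variable majorizations are iterated. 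That pairwise grouping is the missing idea which makes your scheme go through without the unproved four-vector lemma.

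Two further points need attention to make your sketch rigorous. First, applying the maximum principle requires verifying its hypotheses (continuity up to the boundary, even/odd extensions in the space variable, and the uniform $L^{r}$ bound in the relevant variable), and the final estimate requires $\sup_{t_1,t_2}\iint_{\R_+\times\R_+} F\,d\mu_\lz\lesssim\|f\|_{\horiz}$; this is exactly Lemma \ref{lem: F controled by riesz}, which rests on the representation of $\qlz$ via the kernel $P^{[\lz+1]}_t$ and the bound $\|\qlz g\|_{L^1}\lesssim\|\riz g\|_{L^1}$ -- an ingredient your proposal uses implicitly but never supplies. Second, you majorize directly by the Poisson integral of the boundary data $(f,\rizo f,\rizt f,\rizo\rizt f)$; the paper instead works at positive heights $\ez_1,\ez_2$ and extracts a weak limit $h\in L^{r}(\rlz)$, $r=1/p$, which avoids having to justify convergence of the conjugate Poisson integrals at the boundary. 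Your version can be repaired (a.e.\ convergence plus the uniform $L^{1/q}$ bound identifies the weak limit), but as written this limiting step, like the subharmonicity claim, is asserted rather than proved. Once these are supplied, bounding $\cn_P f$ by $\big(\cms(|F_0|^{q})\big)^{1/q}$ and invoking Theorem \ref{thm: char of Hardy spacs by max func} is the same endgame as the paper's bound on $\crz_P f$.
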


Moreover, we also characterize $H^{p}_{\Delta_\lambda}( \rlz ) $ for $ p\in((2\lz+1)/(2\lz+2), 1)$ via Bessel Riesz transform in a slightly different form.
A distribution $f\in \big(\GGp(\beta_{1},\beta_{2};\gamma_{1},\gamma_{2})\big)^{'}$ is said to be {restricted at
infinity}, if for any
 $r>0$ large enough, $e^{-t_1\sqrt{\tbz}}e^{-t_2\sqrt{\tbz}} f\in \lrzd$ (for the notation and details of this distribution space, we refer to
 Definition \ref{def-of-test-func-on-M times M} below).
 By Theorem \ref{thm: char of Hardy spacs by max func} and an
argument as in \cite[pp.\,100-101]{S}, we see that for any
$f\in \hpz$ with $p\in((2\lz+1)/(2\lz+2), 1]$ , $e^{-t_1\sqrt{\tbz}}e^{-t_2\sqrt{\tbz}} f\in \lrzd$ for all $r\in[p, \fz]$.


\begin{thm}\label{thm Hp Riesz characterization}
Let $p\in ((2\lz+1)/(2\lz+2), 1)$ and $f\in \big(\GGp(\beta_{1},\beta_{2};\gamma_{1},\gamma_{2})\big)^{'}$  be restricted at
infinity.  Then
$f\in \hap$
 if and only if
there exists a positive constant $C$ such that for all $t_1,\,t_2\in(0, \fz)$, 
\begin{eqnarray}\label{eqn:restr infty lp norm; Riesz trans char Hp}
&&\hspace{.9cm}\big\|e^{-t_1\sqrt{\tbz}}e^{-t_2\sqrt{\tbz}}(f)\big\|_\lpzd+\big\|\rizo\big(e^{-t_1\sqrt{\tbz}}e^{-t_2\sqrt{\tbz}}(f)\big)\big\|_\lpzd
\\
&&\hspace{.9cm} +\big\|\rizt\big(e^{-t_1\sqrt{\tbz}}e^{-t_2\sqrt{\tbz}}(f)\big)\big\|_\lpzd+\big\|\rizo\rizt\big( e^{-t_1\sqrt{\tbz}}e^{-t_2\sqrt{\tbz}}(f)  \big)\big\|_\lpzd\le C.\noz
\end{eqnarray}

\end{thm}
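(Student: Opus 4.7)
The plan is to prove the two implications separately. For the necessity, I would leverage the radial maximal function characterisation of Theorem \ref{thm: char of Hardy spacs by max func} together with the $\hap$-boundedness of the Bessel Riesz transforms; for the sufficiency, I would follow the Stein / Fefferman--Stein conjugate-harmonic-function scheme, adapted to the product Bessel context via the generalised Cauchy--Riemann equations \eqref{CR}.

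For the necessity, since $\rizo,\rizt$ act in disjoint variables and are both spectral functions of $\tbz$, they commute with $e^{-t_1\sqrt{\tbz}}e^{-t_2\sqrt{\tbz}}$. The boundedness of $\rizo$, $\rizt$ and of $\rizo\rizt$ on $\hap$ in the range $p\in((2\lz+1)/(2\lz+2),1)$ follows from the atomic decomposition delivered by Theorem \ref{thm H1 and classical H1} together with the size and regularity estimates of the Bessel Riesz kernels established earlier in the paper, via a standard product Calder\'on--Zygmund argument on spaces of homogeneous type. Combined with the pointwise bound by the radial maximal function $\crz_P$ and Theorem \ref{thm: char of Hardy spacs by max func}, this gives
\[
\big\|e^{-t_1\sqrt{\tbz}}e^{-t_2\sqrt{\tbz}}(\rizo\rizt f)\big\|_\lpzd=\big\|\rizo\rizt\big(e^{-t_1\sqrt{\tbz}}e^{-t_2\sqrt{\tbz}}f\big)\big\|_\lpzd\lesssim \|\crz_P(\rizo\rizt f)\|_\lpzd\lesssim\|f\|_\hap,
\]
uniformly in $(t_1,t_2)$, with the analogous uniform bounds for the remaining three terms.

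For the sufficiency, assume \eqref{eqn:restr infty lp norm; Riesz trans char Hp}, and set
\[
u(t_1,x_1;t_2,x_2):=e^{-t_1\sqrt{\tbz}}e^{-t_2\sqrt{\tbz}}f(x_1,x_2),\quad v_1:=\rizo u,\quad v_2:=\rizt u,\quad v_{12}:=\rizo\rizt u.
\]
Applying \eqref{CR} in the variables $(t_1,x_1)$ to the pairs $(u,v_1)$ and $(v_2,v_{12})$, and symmetrically in $(t_2,x_2)$ to $(u,v_2)$ and $(v_1,v_{12})$, the four-component vector $F:=(u,v_1,v_2,v_{12})$ satisfies a generalised Stein--Weiss conjugate system separately in each pair of variables. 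A Bochner-type computation, patterned on the one-parameter Bessel subharmonicity identity in \cite{ms}, then shows that $|F|^p$ is subharmonic in $(t_1,x_1)$ with respect to $\triangle_{t_1,x_1}$ for every fixed $(t_2,x_2)$ and analogously in the second pair, precisely when $p>(2\lz+1)/(2\lz+2)$; the threshold reflects the effective homogeneous dimension $2\lz+2$ carried by $dm_\lz$. Combined with the uniform $L^p$ bound \eqref{eqn:restr infty lp norm; Riesz trans char Hp}, the bi-subharmonicity supplies bi-harmonic majorants which, after an iterated one-variable Stein--Weiss argument in each $(t_i,x_i)$ pair, yield the pointwise control
\[
\cn_P f(x_1,x_2)\lesssim \big[\cm_{s,1}\cm_{s,2}(|F|^s)\big]^{1/s}(x_1,x_2)
\]
for some $s\in(p,1]$, where $\cm_{s,i}$ is the Hardy--Littlewood maximal operator in the $i$-th variable on $(\R_+,dm_\lz)$. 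Choosing $s$ with $p/s<1$ and invoking the $L^{p/s}$-boundedness of the strong maximal operator on $\rlz$ places $\cn_P f\in\lpzd$, and Theorem \ref{thm: char of Hardy spacs by max func} then delivers $f\in\hap$.

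The main obstacle is the bi-subharmonicity step in the sufficiency. The first-order term $\frac{2\lz}{x}v$ in \eqref{CR} complicates the usual Stein--Weiss algebraic identity, and one must verify in each variable pair that the second-order Bessel operator still dominates the sign-indefinite contributions in the expansion of $\triangle_{t,x}|F|^p$; this is where the sharp exponent $(2\lz+1)/(2\lz+2)$ enters. Converting one-variable subharmonicity into a genuine bi-harmonic majorant on $(\R_+\times\R_+)^2$ and then into a non-tangential maximal bound is a secondary but delicate step, handled by freezing one pair of variables, applying the one-parameter result, iterating in the other pair, and finally passing through the pointwise strong-maximal estimate above.
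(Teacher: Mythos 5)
Your overall strategy coincides with the paper's: the necessity is obtained from the maximal-function characterization plus the $H^p\to L^p$ (or $H^p\to H^p$) boundedness of the product Bessel Riesz transforms applied to $\plzo\plzt f$, and the sufficiency runs through the four conjugate bi-harmonic functions, one-variable subharmonicity via the Muckenhoupt--Stein lemma, iteration in the two variable pairs, and the radial/non-tangential maximal characterization. However, your final exponent step fails as written. You choose $s\in(p,1]$, note $p/s<1$, and then invoke the ``$L^{p/s}$-boundedness of the strong maximal operator''; but no maximal operator (iterated Hardy--Littlewood or strong) is bounded on $L^{r}(\rlz)$ for $r\le 1$, so the passage from $\cn_P f\ls\big[\mathcal{M}_1\mathcal{M}_2(|F|^{s})\big]^{1/s}$ to $\cn_P f\in\lpzd$ collapses. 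The auxiliary exponent must be taken \emph{below} $p$, not above it: one needs $s\in\big(\tfrac{2\lz+1}{2\lz+2},\,p\big)$, so that the subharmonicity (which requires the power to stay $\ge\tfrac{2\lz+1}{2\lz+2}$) still holds while $p/s>1$ makes the maximal estimate usable. This is exactly how the paper proceeds: it fixes $q\in\big(\tfrac{2\lz+1}{2\lz+2},p\big)$, sets $r=p/q>1$, and uses the $L^{r}$-boundedness of the radial maximal operator $\crz_P$. The standing hypothesis $p>\tfrac{2\lz+1}{2\lz+2}$ is precisely what guarantees this window is nonempty, so the slip is not cosmetic.

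Two further points where your sketch diverges from what can actually be justified. First, you assert that the full four-component $|F|^p$ is subharmonic in $(t_1,x_1)$; the cited one-parameter result (\cite[Lemma 5]{ms}) is for a single conjugate pair, and the paper never needs the four-component claim: it works with $F_1=\{u^2+v^2\}^{1/2}$, $F_2=\{w^2+z^2\}^{1/2}$ in $(t_1,x_1)$ and $F_3,F_4$ in $(t_2,x_2)$, obtains the majorization $F_i^q(\ez+t,\cdot)\le P^{[\lz]}_t\big(F_i^q(\ez,\cdot)\big)$ for each pair from Lemma 11 of \cite{ms}, and then combines them using $F\sim F_1+F_2\sim F_3+F_4$ and the positivity of the Poisson semigroup. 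Second, the object to which you apply the maximal operators — a boundary function of $|F|^s$ — does not exist pointwise for a general distribution restricted at infinity; the paper produces it by first mollifying with $P^{[\lz]}_{\dz_1}P^{[\lz]}_{\dz_2}$, proving the uniform bound $\sup_{t_1,t_2}\|F(t_1,t_2,\cdot,\cdot)\|^p_{\lpzd}\ls1$, and then extracting a \emph{weak} $L^{p/q}$ limit $h$ of $F^q(\dz_{1,k},\dz_{2,j},\cdot,\cdot)$, after which $[\crz_P f]^q\ls\crz_P(h)$ and the $L^{p/q}$ bound on $\crz_P$ conclude. Your sketch needs these two repairs (pairwise subharmonicity and the weak-limit construction of the boundary data, with the corrected exponent window) to become a proof.
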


Based on the characterization of product Hardy space $H^{1}_{\Delta_\lambda}( \rlz ) $ via Bessel Riesz transforms and the duality of $H^{1}_{\Delta_\lambda}( \rlz ) $ with ${\rm BMO}_{\Delta_\lambda}( \rlz ) $, we directly have the fourth main result:  the decomposition of ${\rm BMO}_{\Delta_\lambda}( \rlz ) $, whose proof  is similar to the classical setting.

\begin{thm}\label{thm BMO decomposition}
The following two statements are equivalent.

${\rm (i)}$ $\varphi \in {\rm BMO}_{\Delta_\lambda}( \rlz ) $;

${\rm(ii)}$ There exist $g_i\in L^\infty( \rlz )$, $i=1,2,3,4$, such that
$$ \varphi= g_1 +  R_{\Delta_\lambda,\,1}(g_2) + R_{\Delta_\lambda,\,2}(g_3) + R_{\Delta_\lambda,\,1}R_{\Delta_\lambda,\,2}(g_4).$$
\end{thm}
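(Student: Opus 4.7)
The plan is to mimic the classical Fefferman--Stein proof of the BMO decomposition, with Theorem~\ref{thm H1 riesz characterization} replacing the Euclidean Riesz-transform characterization of $H^1$. The argument splits into the easy direction (ii)$\Rightarrow$(i) and the main direction (i)$\Rightarrow$(ii).

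For (ii)$\Rightarrow$(i), I would pair $\varphi$ with an arbitrary $f$ in a dense subspace of $H^1_{\Delta_\lambda}(\rlz)$ and pass each $R_{\Delta_\lambda,i}$ across the pairing to its $L^2(\rlz)$-adjoint $\tilde R_{\Delta_\lambda,i}$, the conjugate Bessel Riesz transform arising from the Cauchy--Riemann pair \eqref{CR}. The analogue of Theorem~\ref{thm H1 riesz characterization} with $\tilde R_{\Delta_\lambda,i}$ in place of $R_{\Delta_\lambda,i}$---provable by the same machinery, since \eqref{CR} is symmetric in the roles of $u$ and $v$---gives $\|\tilde R_{\Delta_\lambda,i}f\|_{L^1(\rlz)}\lesssim\|f\|_{H^1_{\Delta_\lambda}}$ and $\|\tilde R_{\Delta_\lambda,1}\tilde R_{\Delta_\lambda,2}f\|_{L^1(\rlz)}\lesssim\|f\|_{H^1_{\Delta_\lambda}}$. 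H\"older's inequality then bounds $|\langle\varphi,f\rangle|$ by $(\sum_i\|g_i\|_\infty)\|f\|_{H^1_{\Delta_\lambda}}$, and $H^1$--BMO duality concludes $\varphi\in{\rm BMO}_{\Delta_\lambda}(\rlz)$.

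For the main direction (i)$\Rightarrow$(ii), I would apply a Hahn--Banach extension argument to the map
$$T:H^1_{\Delta_\lambda}(\rlz)\longrightarrow L^1(\rlz)^4,\quad Tf:=(f,\,\tilde R_{\Delta_\lambda,1}f,\,\tilde R_{\Delta_\lambda,2}f,\,\tilde R_{\Delta_\lambda,1}\tilde R_{\Delta_\lambda,2}f),$$
which by the conjugate analogue of Theorem~\ref{thm H1 riesz characterization} is a bounded isomorphism of $H^1_{\Delta_\lambda}(\rlz)$ onto a closed subspace $V\subset L^1(\rlz)^4$. Any $\varphi\in{\rm BMO}_{\Delta_\lambda}(\rlz)$ determines a bounded linear functional $L_\varphi$ on $H^1_{\Delta_\lambda}(\rlz)$ via the duality; then $L_\varphi\circ T^{-1}$ is bounded on $V$, and Hahn--Banach extends it to a bounded functional on $L^1(\rlz)^4$, represented by some $(g_1,g_2,g_3,g_4)\in L^\infty(\rlz)^4$ through
$$\langle\varphi,f\rangle=\int g_1 f\,d\mu_\lambda+\int g_2\,\tilde R_{\Delta_\lambda,1}f\,d\mu_\lambda+\int g_3\,\tilde R_{\Delta_\lambda,2}f\,d\mu_\lambda+\int g_4\,\tilde R_{\Delta_\lambda,1}\tilde R_{\Delta_\lambda,2}f\,d\mu_\lambda.$$
Using the adjoint relation $\tilde R_{\Delta_\lambda,i}^{\,*}=R_{\Delta_\lambda,i}$ inherited from the $L^2(\rlz)$-duality, together with the commutativity of $R_{\Delta_\lambda,1}$ and $R_{\Delta_\lambda,2}$ (which act on distinct variables), moving each $\tilde R_{\Delta_\lambda,i}$ back to the other side yields the distributional identity $\varphi=g_1+R_{\Delta_\lambda,1}g_2+R_{\Delta_\lambda,2}g_3+R_{\Delta_\lambda,1}R_{\Delta_\lambda,2}g_4$.

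The main obstacle is establishing the conjugate version of Theorem~\ref{thm H1 riesz characterization}, i.e.\ that $\|f\|_{H^1_{\Delta_\lambda}}$ is equivalent to $\|f\|_{L^1(\rlz)}+\sum_{i=1}^{2}\|\tilde R_{\Delta_\lambda,i}f\|_{L^1(\rlz)}+\|\tilde R_{\Delta_\lambda,1}\tilde R_{\Delta_\lambda,2}f\|_{L^1(\rlz)}$, which is needed to make $T$ an isomorphism onto a closed subspace. This should follow along exactly the lines of the proof of Theorem~\ref{thm H1 riesz characterization}, using the variant of Merryfield's lemma, the grand-maximal-function technique, and the symmetry of the Cauchy--Riemann system \eqref{CR} in $u$ and $v$. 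Once it is in place, the rest of the argument is purely formal and mirrors the classical Fefferman--Stein proof on $\R^n$.
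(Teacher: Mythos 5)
Your overall route is the one the paper has in mind: the paper offers no detailed argument for this theorem, only the remark that it follows from Theorem \ref{thm H1 riesz characterization} and the $H^1_{\Delta_\lambda}$--${\rm BMO}_{\Delta_\lambda}$ duality "as in the classical setting", and your proposal is precisely that Fefferman--Stein duality/Hahn--Banach scheme. The (ii)$\Rightarrow$(i) direction is fine, and in fact you do not need any new characterization there: the adjoints $\tilde R_{\Delta_\lambda,i}$ are again product Calder\'on--Zygmund operators (transposed kernels), so their boundedness from $H^1_{\Delta_\lambda}(\rlz)$ to $L^1(\rlz)$ follows exactly as in \eqref{restr infty hp lp norm}.

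The genuine gap is in the step you yourself flag as the "main obstacle". Since the Bessel Riesz transform is not anti-self-adjoint (its kernel \eqref{riz} is not antisymmetric; $R_{\Delta_\lambda}^*=\Delta_\lambda^{-1/2}\bigl(-\partial_x-\tfrac{2\lambda}{x}\bigr)\neq\pm R_{\Delta_\lambda}$), your Hahn--Banach argument indeed requires the lower bound $\|f\|_{H^1_{\Delta_\lambda}}\lesssim\|f\|_{L^1}+\sum_i\|\tilde R_{\Delta_\lambda,i}f\|_{L^1}+\|\tilde R_{\Delta_\lambda,1}\tilde R_{\Delta_\lambda,2}f\|_{L^1}$, i.e.\ the hard direction of a \emph{conjugate} version of Theorem \ref{thm H1 riesz characterization}. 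Your justification --- that this follows "by the symmetry of \eqref{CR} in $u$ and $v$" --- does not hold: the system \eqref{CR} is not symmetric under exchanging $u$ and $v$. The function $u$ is $\lambda$-harmonic, while its conjugate $v$ satisfies the different equation $\partial_t^2v+\partial_x^2v+\tfrac{2\lambda}{x}\partial_xv-\tfrac{2\lambda}{x^2}v=0$, and the key subharmonicity inequality \eqref{subharmonic for Fi} (Muckenhoupt--Stein, used together with Lemma \ref{lem: subharmonic func} in the proof of Theorem \ref{thm H1 riesz characterization}) is proved exactly for pairs solving \eqref{CR} with $u$ a Poisson integral, in the range $p>\tfrac{2\lambda+1}{2\lambda+2}$. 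For the adjoint transforms one would have to build the corresponding conjugate harmonic system (which lives in the shifted, $(\lambda+1)$-type Bessel framework, cf.\ \eqref{conjuga for poiss inte}), verify the analogous subharmonicity estimate and exponent range, and rerun the majorant argument; none of this is automatic, and your proposal does not supply it. Alternatively one can run Hahn--Banach with the map furnished by Theorem \ref{thm H1 riesz characterization} itself, but then the representation one obtains is $\varphi=g_1+\tilde R_{\Delta_\lambda,1}g_2+\tilde R_{\Delta_\lambda,2}g_3+\tilde R_{\Delta_\lambda,1}\tilde R_{\Delta_\lambda,2}g_4$, which matches the stated (ii) only after the same adjoint issue is resolved; so the missing conjugate characterization (or an explicit identification of $\tilde R_{\Delta_\lambda,i}$ with $R_{\Delta_\lambda,i}$ modulo harmless operators) is the concrete point that must be filled in.
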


\subsection{Structure and main methods of this paper}

In Section \ref{s2}, we first recall the known facts on product spaces of homogeneous type (\cite{HLL2}) and then apply these to our setting $\rlz$, including the Littlewood--Paley theory, Hardy and BMO spaces and atomic decompositions. We then  provide the $L^p$-boundedness ($1<p<\infty$) of the product Littlewood--Paley area functions $Sf$ and $S_uf$ as defined in \eqref{esf} and \eqref{esfu}, respectively. In fact, we will prove this result for a more general Littlewood--Paley  area functions and $g$-functions with the kernels of the operators inside satisfying certain size, smoothness and cancellation conditions which covers both the Bessel Poisson kernel and Bessel heat kernel. The main approach we use here is Calder\'on's reproducing formula, almost orthogonality estimates and the Plancherel--P\'olya type inequalities in the product setting.

In Section \ref{sec:first theorem}, we prove Theorem \ref{thm H1 and classical H1}, our first main result. We note that the standard product Hardy spaces $H^p(\rlz)$ (Definition \ref{def-of-Hardy-space on M new metric}) is a subset of a larger distribution space while  our $H^p_{\Delta_\lz}(\rlz)$ is a completion of the initial subspace in $L^2(\rlz)$. Hence, to show that $H^p(\rlz)$ and $H^p_{\Delta_\lz}(\rlz)$ coincide, the main approach used here is via atomic decompositions. Following an idea in \cite{CDLWY},
we show that by choosing some particular function for Calder\'on's reproducing formula (Proposition \ref{p-kernel cancel-compact supp}), we obtain the atomic decomposition for $H^{p}_{\Delta_\lambda}( \rlz )$, which leads to $H^p(\rlz)\cap L^2(\rlz)=H^p_{\Delta_\lz}(\rlz)\cap L^2(\rlz)$ with equivalent norms (quasi-norms).

In Section \ref{sec:second thm}, we present the proof of Theorem \ref{thm: char of Hardy spacs by max func} by
showing the following inequalities
\begin{eqnarray}\label{loop}
\|f\|_{H^p_{\Delta_\lz}(\rlz)}
&&\leq
\|f\|_{\hsu } \ls
\|f\|_{H^p_{\cn_{P}}(\rlz)}\ls
\|f\|_{H^p_{R_{P}}(\rlz)}\\
&&\ls \|f\|_\hrz\ls \|f\|_\hnz\ls \|f\|_\hap\noz,
\end{eqnarray}
i.e., all the norms above are equivalent.

Here the first inequality follows directly by definition. The fourth inequality follows from the well-known subordination formula which connects the Poisson kernel to the heat kernel. The fifth inequality follows directly from definition. The last inequality follows from the result that $\hap$ coincides with the classical $H^p(\rlz)$ (Theorem \ref{thm H1 and classical H1}) and the fact that $H^p(\rlz)$ has atomic decomposition.
The main difficulties here are in the proofs of the second and third inequalities.

To prove the second inequality, we first point out that, to the best of our knowledge, the only one way up to now, to pass from the Littlewood--Paley area function to the non-tangential maximal function in the classical product setting is due to K. Merryfield \cite{KM}.
The main technique in \cite{KM} relies on the construction of the function $\psi$ in $C_c^\infty(\R)$ according to any given $\phi\in C_c^\infty(\R)$ with certain conditions,  satisfying that for any $f\in L^2(\rr)$,
$$  \pa_t (f\ast \phi_t(x)) = -\pa_x (f\ast \psi_t(x)) $$
which is one of the Cauchy--Riemann equations in the classical setting. Here $\phi_t(x):=t^{-1}\phi({x\over t})$ and similar for $\psi_t$.


Based on the idea above, suppose $\phi\in C^\fz_c(\R_+)$ such that $\phi\ge 0$, $\supp(\phi)\subset (0, 1)$, and $\inzf \phi(x)\,\dmz(x)=1$, we construct a function $\psi(t, x, y)$ defined on $\R_+\times \R_+ \times \R_+$ by solving the following equation (which is one of the Cauchy--Riemann equations adapted to the Bessel setting):
$$\prz_t (\phi_t\slz f)(x)= \prz_x[\psi(f)(t,x)]+\frac{2\lz}x\psi(f)(t,x),$$
where $\phi_t$ is the dilation of $\phi$ in the Bessel setting and $\psi(f)(t,x):=\int_{\R_+} \psi(t, x, y) f(y)\dmz(y)$.
Moreover, we show that $\psi(t, x, y)$ satisfies the required
size, smoothness and cancellation conditions, and especially the support condition: supp $\psi(t, x, y)\subset \{t,x,y\in\R_+:\, |x-y|<t\}$. Note that  $\psi(f)$ here is no longer a Hankel convolution (for notation and details, we refer to Lemma \ref{lem: constru of func psi}).

To prove the third inequality, we borrow an idea from \cite{YZ} in the one-parameter setting (see also \cite{gly1,gly2}), to establish a product grand maximal function, which controls the non-tangential maximal function. Then, by using the reproducing formula and almost orthogonality estimates, we obtain that
the $L^p$ norm of the grand maximal function is bounded by that of the radial maximal function for ${2\lz+1\over 2\lz+2}<p\leq1$.

In Section \ref{sec:third theorem}, we prove our third main result, the Bessel Riesz transform characterizations of $H^p_{\Delta_\lz}(\rlz)$  (Theorems \ref{thm H1 riesz characterization} and  \ref{thm Hp Riesz characterization}). The main ideas in the one-parameter setting are from Fefferman--Stein \cite{FSt} (see also \cite[Chapter III, Section 4.2]{S} and \cite{bdt}), where they obtained the result by studying the boundary value of the corresponding harmonic function and its conjugate. In our product setting, we consider the Bessel bi-harmonic function $u(t_1,t_2,x_1,x_2)$ and its three conjugate bi-harmonic functions $v,w,z$ such that $(u,v)$ and $(w,z)$ satisfy the generalised Cauchy--Riemann equations \eqref{CR} in the first group of variables $(t_1,x_1)$, and that $(u,w)$ and $(v,z)$ in the second group of variables $(t_2,x_2)$. Then, using Lemma 11 in \cite{ms}, we obtain the harmonic majorants of the following four functions $\{u^2+v^2\}^{p\over2}$, $\{w^2+z^2\}^{p\over2}$,  $\{u^2+w^2\}^{p\over2}$,  $\{v^2+z^2\}^{p\over2}$ corresponding to the four groups of Cauchy--Riemann equations above respectively. By iteration, we obtain the harmonic majorant of the bi-harmonic function $\{u^2+v^2+w^2+z^2\}^{p\over2}$.
Then, our main result follows from the properties of the Poisson semigroup $\{e^{-t\sqrt{\Delta_\lz}}\}_{t>0}$ and the standard approach (\cite[Chapter III, Section 4.2]{S}). To the best of our knowledge, the Hilbert transform characterizations not been addressed before for the classical Chang--Fefferman product Hardy space $H^p(\R\times \R)$ when $p<1$. We note that when $\lz=0$, our result and proof go back
to $H^p(\R\times \R)$ with minor modifications, and hence provide the characterizations of $H^p(\R\times \R)$ via Hilbert transforms.

Throughout the whole paper,
we denote by $C$ and $\widetilde{C}$ {  positive constants} which
are independent of the main parameters, but they may vary from line to
line. Constants with subscripts, such as $C_0$ and
$A_1$, do not change in different occurrences.
For every $p\in(1, \fz)$, we denote by $p'$ the conjugate of $p$, i.e., $\frac{1}{p'}+\frac{1}{p}=1$.  If $f\le Cg$, we then write $f\ls g$ or $g\gs f$;
and if $f \ls g\ls f$, we  write $f\sim g.$
For any $k\in \mathbb{R}_+$ and $I:= I(x, r)$ for some $x$, $r\in (0, \fz)$,
$kI:=I(x, kr)$.

\section{Preliminaries}
\label{s2}

In this section, we first apply the known results on product spaces of homogeneous type developed in \cite{HLL2,HLPW} to our setting on $\rlz$. We then recall the properties of the Poisson kernels and conjugate Poisson kernels in the Bessel setting. Finally, we provide a continuous version of Littlewood--Paley theory, which will be used in the subsequent sections.

\subsection{Product Hardy and BMO spaces on spaces of homogeneous type}


To begin with, we point out that in \cite{HLL2,HLPW}, they considered the general setting of product spaces of homogeneous type
$\X:= (X_1,d_1,\mu_1)\times (X_2,d_2,\mu_2)$,
and developed the test function spaces and distribution spaces, Calder\'on's reproducing formula, Littlewood--Paley theory,
product Hardy and BMO spaces and atomic decompositions. For notational simplicity,  we now apply all these results to our setting, i.e.,
$$\X:=\rlz:=(\R_+,|\cdot|,\dmz)\times (\R_+,|\cdot|,\dmz).$$
We first observe that for
any interval $I:=I(x,r)\subset \R_+$, $m_\lz(I)\sim x^{2\lz}r+r^{2\lz+1}$; moreover, from  \cite{DLMWY} we have that
for any $I\subset \rrp$,
\begin{equation*}
\min\big(2, 2^{2\lz}\big)m_\lz(I)\le m_\lz(2I)\le 2^{2\lz+1}m_\lz(I).
\end{equation*}

We now recall the definition of approximation to the identity.
\begin{defn}\label{def-ati}
We say that $\{S_k\}_{k\in\mathbb Z}$ is an approximation to the identity if $\lim_{k\to\infty} S_k=Id$, $\lim_{k\to -\infty} S_k=0$
and moreover, the kernel $S_k(x,y)$ of $S_k$ satisfies the following condition:  for $\beta,\gamma\in (0,1]$,
\begin{itemize}
\item[${\rm (A_i)}$] for any $x,\,y,\,t \in\R_+$,
\begin{equation*}
|S_k(x, y)|\ls \frac1{m_\lz(I(x, 2^{-k}))+m_\lz(I(y,2^{-k}))+m_\lz(I(x, |x-y|))}\bigg(\frac {2^{-k}}{|x-y|+2^{-k}}\bigg)^\gamma;
\end{equation*}
  \item [${\rm (A_{ii})}$] for any $x,y,\wz y,t \in\R_+$ with $|y-\wz y|\le (t+|x-y|)/2$,
\begin{align*}
&|S_k(x, y)-S_k(x,\wz y)|+ |S_k(y,x)-S_k(\wz y,x)|\\
&\ls \frac1{m_\lz(I(x, 2^{-k}))+m_\lz(I(y,2^{-k}))+m_\lz(I(x, |x-y|))}\bigg(\frac {|y-\wz y|}{|x-y|+2^{-k}}\bigg)^{\beta}\bigg(\frac {2^{-k}}{|x-y|+2^{-k}}\bigg)^\gamma;
\end{align*}
  \item [${\rm (A_{iii})}$] for any $x,y,\wz x,\wz y,t \in\R_+$ with $|y-\wz y|\le (t+|x-y|)/2$,
\begin{align*}
&|S_k(x, y)-S_k(x,\wz y)-S_k(\wz x,y)+S_k(\wz x,\wz y)|\\
&\ls \frac1{m_\lz(I(x, 2^{-k}))+m_\lz(I(y,2^{-k}))+m_\lz(I(x, |x-y|))}\\
&\quad\times \bigg(\frac {|x-\wz x|}{|x-y|+2^{-k}}\bigg)^{\beta}\bigg(\frac {|y-\wz y|}{|x-y|+2^{-k}}\bigg)^{\beta}\bigg(\frac {2^{-k}}{|x-y|+2^{-k}}\bigg)^\gamma;
\end{align*}
  \item [${\rm (A_{iv})}$] for any $t,\, x\in\R_+$,
\begin{equation*}
\inzf S_k(x, y)\dmzy=\inzf S_k(y,x)\dmzy=1.
\end{equation*}
\end{itemize}
\end{defn}
One of the constructions of an approximation to the identity is due to
Coifman, see \cite{DJS}. We set $D_k:=S_k-S_{k-1}$, and it is obvious that
$D_k$ satisfies ${\rm (A_i)}$, ${\rm (A_{ii}) }$ and ${\rm (A_{iii})}$ and with
\begin{equation*}
\inzf D_k(x, y)\dmzy=\inzf D_k(y,x)\dmzy=0
\end{equation*}
for any $t,\, x\in\R_+$.

We now
recall the results on Hardy spaces and Carleson measure spaces
and related results developed in
 \cite{HLL2}. We begin with the test function spaces and distribution spaces, and the one-parameter version of which was defined by Han, M\"uller, and Yang \cite{hmy06,hmy08}, and then the product version by Han, Li, and Lu \cite{HLL2}.

%
%
\begin{defn}[\cite{hmy06}]\label{def-of-test-func-space}
Consider the space $(\R_+,|\cdot|,\dmz)$. Let $0<\gamma, \beta\leq 1$ and $r>0.$
A function $f$ defined on $\R_+$ is said to be a test function of type
$(x_0,r,\beta,\gamma)$ centered at $x_0\in \R_+$ if $f$ satisfies the
following conditions:
\begin{itemize}
\item[(i)]
$|f(x)|\leq C \frac{\displaystyle 1}{\displaystyle
V_r(x_0)+V(x,x_0)} \Big(\frac{\displaystyle r}{\displaystyle
r+d(x,x_0)}\Big)^{\gamma}$;
\item[(ii)]
$|f(x)-f(y)|\leq C \Big(\frac{\displaystyle
d(x,y)}{\displaystyle r+d(x,x_0)}\Big)^{\beta} \frac{\displaystyle
1}{\displaystyle V_r(x_0)+V(x,x_0)} \Big(\frac{\displaystyle
r}{\displaystyle r+d(x,x_0)}\Big)^{\gamma}$ for all $x,y\in \R_+$ with
$d(x,y)\le{\frac{1}{2}}(r+d(x,x_0)).$
\end{itemize}
Here $V(x,x_0):=m_\lz(I(x, |x-x_0|))$.
If $f$ is a test function of type $(x_0,r,\beta,\gamma)$, we write
$f\in \cg(x_0,r,\beta,\gamma)$ and the norm of $f\in
\cg(x_0,r,\beta,\gamma)$ is defined by
$\|f\|_{\cg(x_0,\,r,\,\beta,\,\gamma)}:=\inf\{C>0: {\rm (i)\ and\ (ii)\ hold}\}.$
\end{defn}

Now for any fixed $x_0\in \R_+$, we denote
$\cg(\beta,\gamma):=\cg(x_0,1,\beta,\gamma)$ and by $\cg_0(\beta,\gamma)$ the collection of all test functions in $\cg(\beta,\gamma)$ with $\int_{\R_+} f(x) \dmz(x)=0.$ Note that
$\cg(x_1,r,\beta,\gamma)=\cg(\beta,\gamma)$ with equivalent norms for all
$x_1\in \R_+$ and $r>0$ and that
$\cg(\beta,\gamma)$ is a Banach space with respect to the norm in
$\cg(\beta,\gamma)$.

Let $\GG(\beta,\gamma)$ be the
completion of the space $\cg_0(1,1)$ in
the norm of $\cg(\beta,\gamma)$ when $0<\beta,\gamma<1$. If $f\in \GG(\beta,\gamma)$, we then define
$\|f\|_{\GG(\beta,\gamma)}:=\|f\|_{\cg(\beta,\gamma)}$. $(\GG(\beta,\gamma))'$, the distribution space, is defined to be the set of all
linear functionals $L$ from $\GG(\beta,\gamma)$ to $\mathbb{C}$ with
the property that there exists $C\geq0$ such that for all $f\in
\GG(\beta,\gamma)$,
$|L(f)|\leq C\|f\|_{\GG(\beta,\gamma)}.$

Now we return to the product setting and recall the space of test functions and distributions on the product space $\rlz$.
\begin{defn}[\cite{HLL2}]\label{def-of-test-func-on-M times M}
Let $(x_0,y_0)\in \R_+\times\R_+$, $0<\gamma_1,\gamma_2,\beta_1,\beta_2\leq 1$ and $r_1, r_2>0.$ A function $f(x,y)$ defined on $\rlz$ is said to be a test function of type
$(x_0,y_0;r_1,r_2;\beta_1,\beta_2;\gamma_1,\gamma_2)$ if for any fixed $y\in \R_+,$ $f(x,y),$ as a function of
the variable of $x,$ is a test function in $\cg(x_0,r_1,\beta_1,\gamma_1)$ on $\R_+.$  Moreover, the following conditions are satisfied:
\begin{itemize}
\item[(i)]
$\Vert f(\cdot,y)\Vert_{\cg(x_0,\,r_1,\,\beta_1,\,\gamma_1)}\leq
\frac{\displaystyle C }{\displaystyle
V_{r_2}(y_0)+V(y_0,y)}\Big(\frac{\displaystyle r_2}{\displaystyle
r_2+d(y,y_0)}\Big)^{\gamma_2}$;
\item[(ii)]
$\Vert f(\cdot,y)-f(\cdot,\wz y)\Vert_{\cg(x_0,\,r_1,\,\beta_1,\,\gamma_1)}\leq
\frac{\displaystyle C}{\displaystyle
V_{r_2}(y_0)+V(y_0,y)}\Big(\frac{\displaystyle d(y,\wz y)}{\displaystyle
r_2+d(y,y_0)}\Big)^{\beta_2}\Big(\frac{\displaystyle r_2}{\displaystyle
r_2+d(y,y_0)}\Big)^{\gamma_2}$ \ \ \ \ \ \ for all $y,\wz y\in \R_+$
with $d(y,\wz y)\leq (r_2+d(y,y_0))/2$.
\end{itemize}
Similarly, for any fixed $x\in \R_+,$ $f(x,y),$ as a function of the variable of $y,$ is a test function in $\cg(y_0,r_2,\beta_2,\gamma_2)$ on $\R_+$, and both properties (i) and (ii) also hold with $x,y$ interchanged.

If $f$ is a test function of type
$(x_0,y_0;r_1,r_2;\beta_1,\beta_2;\gamma_1,\gamma_2)$, we write
$f\in \cg(x_{0},y_{0};r_{1},r_{2};\beta_{1},\beta_{2};$
$\gamma_{1},\gamma_{2})$ and the norm of $f$ is defined by
$$\|f\|_{\cg(x_{0},\,y_{0};\,r_{1},\,r_{2};\,\beta_{1},\,\beta_{2};\,\gamma_{1},\,\gamma_{2})}:=\inf\{C:\  {\rm (i),\, (ii)\ and\ (iii)\ hold}\}.$$
\end{defn}

Similarly, we denote by
$\cg(\beta_{1},\beta_{2};\gamma_{1},\gamma_{2})$ the class
$\cg(x_{0},y_{0};1,1;\beta_{1},\beta_{2};\gamma_{1},\gamma_{2})$ for
any fixed $(x_{0},y_{0})\in \R_+\times\R_+.$  Say that $f(x,y)\in \cg_0(\beta_{1},\beta_{2};\gamma_{1},\gamma_{2})$ if $$\int_{X_1}f(x,y)d\mu_1(x)=\int_{X_2}f(x,y)d\mu_2(y)=0.$$
Note that
$\cg(x_{0},y_{0};r_{1},r_{2};\beta_{1},\beta_{2};\gamma_{1},\gamma_{2})=
\cg(\beta_{1},\beta_{2};\gamma_{1},\gamma_{2})$ with equivalent
norms for all $(x_{0},y_{0})\in \R_+\times \R_+$ and $r_1,r_2>0$ and that
$\cg(\beta_{1},\beta_{2};\gamma_{1},\gamma_{2})$ is a Banach space
with respect to the norm in
$\cg(\beta_{1},\beta_{2};\gamma_{1},\gamma_{2})$.

Let $\GGp(\beta_1,\beta_2;\gamma_1,\gamma_2)$ be the completion of
the space $\cg_0(1,1,1,1)$ in
$\cg(\beta_1,\beta_2;\gamma_1,\gamma_2)$ with
$0<\beta_i,\gamma_i<1$.  If
$f\in\GGp(\beta_{1},\beta_{2};\gamma_{1},\gamma_{2}) $, we then
define
$\|f\|_{\GGp(\beta_{1},\,\beta_{2};\,\gamma_{1},\,\gamma_{2})}:=\|f\|_{\cg(\beta_{1},\,\beta_{2};\,\gamma_{1},\,\gamma_{2})}$.

We define the distribution space
$\big(\GGp(\beta_{1},\beta_{2};\gamma_{1},\gamma_{2})\big)^{'}$ by
all linear functionals $L$ from the space
$\GGp(\beta_{1},\beta_{2};\gamma_{1},\gamma_{2})$ to $\mathbb{C}$
with the property that there exists $C\geq 0$ such that for all
$f\in \GGp(\beta_{1},\beta_{2};\gamma_{1},\gamma_{2})$,
$|L(f)|\leq C
\|f\|_{\GGp(\beta_{1},\,\beta_{2};\,\gamma_{1},\,\gamma_{2})}.$

Now we recall the Hardy space $H^p(\rlz)$ in \cite{HLL2} defined in terms of
discrete Littlewood--Paley--Stein square function via a system of ``dyadic cubes" in
spaces of homogeneous type. We mention that in our current setting, we take the classical dyadic
intervals as our dyadic system. That is, for each $k\in \Z$,
$\mathscr X^k:=\{\tau:\,\,I^k_\tau:=(\tau 2^k, (\tau+1)2^k]\}_{\tau\in \Z_+}$, where  $\Z_+:=\N\cup\{0\}$.

\begin{defn}[\cite{HLL2}]\label{def-of-squa-func new metric}
    For $i = 1$, $2$, let $\{S_{k_i}\}_{k_i\in\mathbb{Z}}$ be approximations to
    the identity on $\R_+$, and let $D^{(i)}_{k_i} :=
    S_{k_i}-S_{k_i-1}$. For $f\in
    \big(\GGp(\beta_1,\beta_2;\gamma_1,\gamma_2)\big)'$ with
    $0 < \beta_i, \gamma_i < \theta_i$ for $i
    = 1$, 2, the {discrete Littlewood--Paley--Stein square
    function} $ S_d(f)$ of $f$ is defined by
    \[
        S_d(f)(x,y)
        :=\bigg\{  \sum_{k_1}\sum_{I_1 \in\mathscr{X}^{k_1+N_1}} \sum_{k_2}\sum_{I_2 \in\mathscr{X}^{k_2+N_2}} \big|D^{(1)}_{k_1}D^{(2)}_{k_2}(g)(x_{I_1},x_{I_2})\big|^2 \chi_{I_1}(x_1)\chi_{I_2}(x_2)\bigg\}^{1\over2},
    \]
    where $x_{I_i}$ is the center of of the dyadic interval $I_i$ for $i=1,2$, and $N_1$ and $N_2$ are two large fixed positive numbers.
\end{defn}

\begin{defn}[\cite{HLL2}]\label{def-of-Hardy-space on M new metric}
    Suppose $\frac{ 2\lz+1}{ 2\lz+2} <p\leq1$. Suppose $0 < \beta_i, \gamma_i <1$ for $i = 1$, $2$. The {Hardy space}
    $H^p(\rlz)$ is defined by
    \[
        H^p(\rlz)
        :=\big\lbrace f \in
            \big(\GGpp(\beta_1,\beta_2;\gamma_1,\gamma_2)\big)':
            S_d(f)\in L^p(\rlz)\big\rbrace,
    \]
    with the norm (quasi-norm) $\Vert
    f\Vert_{H^p(\rlz)} := \Vert S_d(f)\Vert_{L^p(\rlz)}$.
\end{defn}

\begin{defn}[\cite{HLL2,HLW}]\label{def-of-product-CMO-space new metric}
    Suppose $\frac{ 2\lz+1}{ 2\lz+2} <p\leq1$. Suppose $0 < \beta_i, \gamma_i < 1$ for $i = 1$, $2$.
    The generalized Carleson measure space $\cmo^p(\rlz)$
    is defined by the set of all
    $f\in\big(\GGp(\beta_1,\beta_2;\gamma_1,\gamma_2)\big)'$ such
    that
    \begin{align*}
        &\|f\|_{\cmo^p(\rlz)}:
        =\sup_{\Omega}
            \bigg\{ \frac{\displaystyle1}{\displaystyle \mu_\lz(\Omega)^{{2\over p}-1}}
            \int_{\Omega}\sum_{         k_1}\sum_{\alpha_1\in \mathscr{X}^{k_1}}\sum_{k_2}\sum_{\alpha_2\in \mathscr{X}^{k_2}}  \Big|D^{(1)}_{k_1}D^{(2)}_{k_2}(f)(x,y)\Big|^2 \\
         &\hskip3cm  \chi_{\{k_1,\,\alpha_1,\,k_2,\,\alpha_2:\ I_{\alpha_1}^{k_1}\times I_{\alpha_2}^{k_2}\subset \Omega\}}(k_1,\alpha_1,k_2,\alpha_2) \chi_{I_{\alpha_1}^{k_1}\times I_{\alpha_2}^{k_2}}(x,y) d\mu_\lz(x,y)
            \bigg\}^{1\over 2}<\infty,\nonumber
    \end{align*}
    where $\Omega$ ranges over all open sets in $\R_+\times\R_+$ with
    finite measure and $\chi_A$ is the characteristic function of a given set $A$.
\end{defn}

Next we recall the atomic decomposition for $H^p(\rlz)$ in \cite{HLPW}.
We call $R:=I_{\tau_1}^{k_1}\times
I_{\tau_2}^{k_2}$ a dyadic rectangle in $\R_+\times\R_+$.   Let
$\Omega\subset \R_+\times\R_+$ be an open set of finite measure and
$m_i(\Omega)$ denote the family of dyadic rectangles
$R\subset\Omega$ which are maximal in the $i$th ``direction'', $i=1,2$. Also we denote by $m(\Omega)$ the set of all maximal
dyadic rectangles contained in $\Omega$.


\begin{defn}[\cite{HLPW}]\label{def-of-p q atom}
Suppose $\frac{ 2\lz+1}{ 2\lz+2} <p\leq1$. A function $a(x_1,x_2)$ defined on $ \R_+\times\R_+ $ is called an atom of $H^p( \rlz )$ if $a(x_1,x_2)$ satisfies:
\begin{itemize}
\item[(1)] supp $a\subset\Omega$, where $\Omega$ is an open set of $ \R_+\times\R_+ $ with finite measure;
\item[(2)] $\|a\|_{L^2(\rlz)}\leq \mu_\lz(\Omega)^{1/2-1/p}$;
\item[(3)]  $a$ can be further decomposed into rectangular atoms $a_R$ associated to dyadic rectangle $R:=I_1\times I_2$, satisfying the following

\begin{enumerate}
  \item [(i)] there exist two constants $\bar C_1$ and $\bar C_2$ such that supp $a_R\subset \bar C_1I_1\times \bar C_2I_2$;
  \item [(ii)] $\int_{\R_+}a_R(x_1,x_2)\dmz(x_1)=0$ for a.e. $x_2\in \R_+$ and\\
   \hskip.7cm $\int_{\R_+}a_R(x_1,x_2)\dmz(x_2)=0$ for a.e.
$x_1\in \R_+$;
  \item [(iii)] $a=\sum\limits_{R\in m(\Omega)}a_R$ and $ \Big(\sum\limits_{R\in m(\Omega)}\|a_R\|_{L^2(\rlz)}^2\Big)^{1/2} \leq \mu_\lz(\Omega)^{1/2-1/p}$.

\end{enumerate}
\end{itemize}
\end{defn}


\begin{thm}[\cite{HLPW}]\label{theorem-Hp atom decomp}
Suppose $\frac{ 2\lz+1}{ 2\lz+2} <p\leq1$. Then $f\in L^2( \rlz )\cap H^p( \rlz )$ if and only if $f$ has an atomic decomposition; that is,
\begin{eqnarray*}
    f=\sum_{i=-\infty}^\infty\lambda_ia_i,
\end{eqnarray*}
in the sense of both $H^p(\rlz )$ and
$L^2( \rlz )$, where $a_i$ are  atoms and $\sum_i|\lambda_i|^p<\infty.$
Moreover,
\begin{eqnarray*}
    \|f\|_{H^p( \rlz )}
    \approx \inf  \left\{\sum_{i=-\infty}^{\infty}|\lambda_i |^p \right\}^{\frac{1}{p}},
\end{eqnarray*}
where the infimum is taken over all decompositions as above and
the implicit constants are independent of the $L^2( \rlz)$ and $H^p( \rlz )$ norms of $f.$
\end{thm}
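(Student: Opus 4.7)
The plan is to prove the two implications separately, keeping in mind that on the product space $\rlz$ the measure $\mu_\lz$ is doubling, so the general machinery of \cite{HLL2} (discrete Calder\'on reproducing formula, Plancherel--P\'olya inequalities, $L^2$-boundedness and almost orthogonality of the building blocks $D_{k_1}^{(1)}D_{k_2}^{(2)}$) is at our disposal. Throughout, the dyadic families $\mathscr X^{k}$ on $\R_+$ are the standard dyadic intervals.

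For the sufficiency direction, it suffices by the $p$-subadditivity of $\|\cdot\|_{H^p}^p$ (with $p\le1$) to show that every atom $a$ supported in an open set $\Omega\subset\rlz$ satisfies $\|a\|_{H^p(\rlz)}\ls 1$. Split $\int_{\R_+\times\R_+}[S_d(a)]^p\,d\mu_\lz$ into the parts over $\wz\Omega$ and its complement, where $\wz\Omega:=\{M_s(\chi_\Omega)>1/2\}$ is the enlargement of $\Omega$ by the strong dyadic maximal function. On $\wz\Omega$ one uses H\"older's inequality together with $\|S_d(a)\|_{L^2(\rlz)}\ls\|a\|_{L^2(\rlz)}\le\mu_\lz(\Omega)^{1/2-1/p}$ and the strong-maximal estimate $\mu_\lz(\wz\Omega)\ls\mu_\lz(\Omega)$. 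On the complement, expand $a=\sum_{R\in m(\Omega)}a_R$ and exploit the cancellation of each rectangular piece in each variable separately: the two moment conditions combined with the smoothness of the kernels of $D_{k_1}^{(1)}D_{k_2}^{(2)}$ produce a gain of the form $(\ell(I_1)/\mathrm{dist})^{\beta_1}(\ell(I_2)/\mathrm{dist})^{\beta_2}$, and summing against the $a_R$ through the Cauchy--Schwarz inequality with the control $\bigl(\sum_R\|a_R\|_{L^2}^2\bigr)^{1/2}\le\mu_\lz(\Omega)^{1/2-1/p}$ yields the desired bound.

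For the necessity direction, given $f\in L^2(\rlz)\cap H^p(\rlz)$, apply the discrete Calder\'on reproducing formula from \cite{HLL2},
\begin{equation*}
f(x_1,x_2)=\sum_{k_1,k_2}\sum_{I_1\in\mathscr X^{k_1+N_1}}\sum_{I_2\in\mathscr X^{k_2+N_2}}\mu_\lz(I_1\times I_2)\,D_{k_1}^{(1)}D_{k_2}^{(2)}(f)(x_{I_1},x_{I_2})\,\wz D_{k_1}^{(1)}\wz D_{k_2}^{(2)}(\cdot,\cdot;x_{I_1},x_{I_2}),
\end{equation*}
with convergence in both $H^p$ and $L^2$. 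Set $\Omega_j:=\{S_d(f)>2^j\}$ and $\wz\Omega_j:=\{M_s(\chi_{\Omega_j})>1/2\}$, and group the rectangles $R=I_1\times I_2$ into the family
\begin{equation*}
B_j:=\{R:\mu_\lz(R\cap\wz\Omega_j)>\tfrac12\mu_\lz(R),\ \mu_\lz(R\cap\wz\Omega_{j+1})\le\tfrac12\mu_\lz(R)\}.
\end{equation*}
Define $\lambda_j:=2^j\mu_\lz(\wz\Omega_j)^{1/p}$ and $a_j:=\lambda_j^{-1}\sum_{R\in B_j}\mu_\lz(R)D_{k_1}^{(1)}D_{k_2}^{(2)}(f)(x_{I_1},x_{I_2})\wz D_{k_1}^{(1)}\wz D_{k_2}^{(2)}(\cdot,\cdot;x_{I_1},x_{I_2})$, and further group the sum $\sum_{R\in B_j}$ according to the maximal dyadic rectangles of $\wz\Omega_j$ to produce the rectangular pieces $(a_j)_R$. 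The coefficient bound $\sum_j|\lambda_j|^p\ls\|f\|_{H^p}^p$ follows from the standard layer-cake identity $\sum_j 2^{jp}\mu_\lz(\wz\Omega_j)\sim\|S_d(f)\|_{L^p}^p$ using the $L^2$-boundedness of $M_s$ on $\rlz$.

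The hardest part will be verifying the atomic structure: namely that each $a_j$ is supported in $\wz\Omega_j$, the $L^2$-bound $\|a_j\|_{L^2}\le\mu_\lz(\wz\Omega_j)^{1/2-1/p}$ holds, and the rectangular decomposition satisfies $(\sum_R\|(a_j)_R\|_{L^2}^2)^{1/2}\le\mu_\lz(\wz\Omega_j)^{1/2-1/p}$. The size estimates rest on duality against $L^2(\wz\Omega_j)$ combined with the Plancherel--P\'olya inequality: one has
\begin{equation*}
\sum_{R\in B_j}\mu_\lz(R)\bigl|D_{k_1}^{(1)}D_{k_2}^{(2)}(f)(x_{I_1},x_{I_2})\bigr|^2\ls\int_{\wz\Omega_j\setminus\wz\Omega_{j+1}}[S_d(f)]^2\,d\mu_\lz\ls 2^{2j}\mu_\lz(\wz\Omega_j),
\end{equation*}
by the defining property of $B_j$ and the comparison of $S_d(f)$ with its discrete analogue at the sample points $(x_{I_1},x_{I_2})$. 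The support control requires Journ\'e's covering lemma on $\rlz$ to pass from the union of $B_j$-rectangles to $\wz\Omega_j$ up to a harmless geometric dilation factor $\bar C_1,\bar C_2$; this is the genuinely multiparameter input, and adapting it to the doubling measure $\mu_\lz$ (as opposed to Lebesgue measure) is where most of the technical work lies. The cancellation of each $(a_j)_R$ in each variable separately is inherited directly from the vanishing moments of $\wz D_{k_1}^{(1)}\wz D_{k_2}^{(2)}$.
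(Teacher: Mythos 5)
First, a point of comparison: the paper does not prove Theorem \ref{theorem-Hp atom decomp} at all --- it is quoted from \cite{HLPW} --- so the only place where the paper carries out an argument of this kind is Step 6 of Section \ref{sec:second thm}, where $\cn_h$ of an atom is estimated; that is the argument your sufficiency half must parallel. Your overall architecture (stopping-time decomposition for the direction $H^p\Rightarrow$ atoms, an atom-by-atom estimate for the converse) is the standard Chang--Fefferman/Han--Li--Lu route, and the decomposition half is essentially right, except that Journ\'e's covering lemma is not what the ``support control'' requires there: if $\mu_\lz(R\cap\Omega_j)>\tfrac12\mu_\lz(R)$, then every point of $R$ satisfies $\cms(\chi_{\Omega_j})>1/2$, so $R\subset\wz\Omega_j$ automatically, and grouping the $R\in B_j$ inside the maximal dyadic rectangles of $\wz\Omega_j$ produces the rectangular pieces with no covering lemma at all. (A lesser wrinkle: the $\wz D^{(i)}_{k_i}$ in the discrete reproducing formula are not compactly supported, so to get literal support in a dilate of $\wz\Omega_j$ one should build the atoms from compactly supported pieces, as the paper does in Section \ref{sec:first theorem} via Proposition \ref{p-kernel cancel-compact supp}.)

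The genuine gap is in the sufficiency direction, which is precisely where Journ\'e's lemma is indispensable and where you omit it. Outside $\wz\Omega$ a point can still be arbitrarily close to a fixed $R=I_1\times I_2\in m(\Omega)$ in one of the two variables; the cancellation of $a_R$ then yields decay only in the other variable, so the per-rectangle estimate carries no smallness, and after H\"older you are left with a factor $\bigl(\sum_{R\in m(\Omega)}\mu_\lz(R)\bigr)^{1-p/2}$, which is not controlled by $\mu_\lz(\Omega)^{1-p/2}$: the maximal dyadic rectangles of a product open set can overlap with unbounded multiplicity (the same phenomenon behind Carleson's counterexample). The correct argument --- the one run in Step 6 --- attaches to each $R$ the largest dyadic $\wz I\supset I_1$ with $\wz I\times I_2\subset\wz\Omega$ and the largest $\wz J\supset I_2$ with $\wz I\times\wz J\subset\wz{\wz\Omega}$, splits the integration region as $\bigcup_R 10(\wz I\times\wz J)$ and its complement (not merely $\wz\Omega$ and its complement), proves per-rectangle bounds as in \eqref{eqn:bdd of nontang maxi func on atom-1}--\eqref{eqn:bdd of nontang maxi func on atom-2} carrying a positive power of $|I_1|/|\wz I|$ resp.\ $|I_2|/|\wz J|$ (this is where the restriction $p>\frac{2\lz+1}{2\lz+2}$ enters), and only then sums, via H\"older together with Journ\'e's covering lemma in its homogeneous-type form (\cite{Jo,P,HLLin}), which bounds $\sum_{R\in m(\Omega)}\mu_\lz(R)\bigl(|I_1|/|\wz I|\bigr)^{\delta}$ by $C\mu_\lz(\Omega)$. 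As written, your ``Cauchy--Schwarz and sum'' step fails without this input, so the sufficiency half needs to be redone along these lines.
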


\subsection{Poisson kernel and conjugate Poisson kernel in the Bessel setting $(\R_+,|\cdot|,\dmz)$}
Recall that $\plz(f):= e^{-t\sqrt\tbz}f=P^{[\lz]}_t\sharp_\lz f$
and
$W^{[\lz]}_t(f):= e^{-t\tbz}f= W^{[\lz]}_{\sqrt{2t}}\sharp_\lz f$,
where \begin{equation*}
P^{[\lz]}(x):=\frac{2\lz\bgz(\lz)}{\bgz(\lz+1/2)\sqrt{\pi}}
\frac1{(1+x^2)^{\lz+1}}
\end{equation*}
and $W^{[\lz]}(x):=2^{(1-2\lz)/2}
\exp\lf(-x^2/2\r)/\bgz(\lz+1/2)$ and $f\in\loz$.
For $f$ and $\varphi\in \loz$, their {Hankel convolution} is defined by
setting, for all $x,\,t\in (0,\fz)$,
\begin{equation}\label{Hankel convo}
\Phi_{t,\,\lz}f(x):=\varphi_t\sharp_\lz f(x):=\dint_0^\fz f(y)\tlz \varphi_t(y)\dmz(y),
\end{equation}
where for $t,\,x\in (0, \fz)$, $\varphi_t(y):=t^{-2\lz-1}\varphi(y/t)$ and $\tlz \varphi_t(y)$
denotes the {Hankel translation} of $\varphi_t(y)$, that is,
\begin{equation}\label{Hankel trans}
\tlz \varphi_t(y):=c_\lz
\dint_0^\pi \varphi_t\lf(\sqrt{x^2+y^2-2xy\cos \theta}\r)(\sin\theta)^{2\lz-1}\,d\theta
\end{equation}
with $c_\lz:=\frac{\bgz(\lz+1/2)}{\bgz(\lz)\sqrt{\pi}}$, see \cite[pp.\,200-201]{bdt} or \cite{h}.


Moreover, we recall that  $\{e^{-t\Delta_\lz}\}_{t>0}$ or $\{e^{-t\sqrt{\Delta_\lz}}\}_{t>0}$ have the following properties; see
\cite{bdt,yy,wyz}.


\begin{lem}\label{l-seimgroup prop}
Let $\{T_t\}_{t>0}$ be one of $\{e^{-t\Delta_\lz}\}_{t>0}$ or $\{e^{-t\sqrt{\Delta_\lz}}\}_{t>0}$. Then $\{T_t\}_{t>0}$ is
a symmetric diffusion semigroup satisfying that $T_tT_s=T_sT_t$ for any $t,\,s\in(0, \fz)$,
$T_0=Id$, the identity operator, $\lim_{t\to0}T_tf=f$ in $\ltz$ and
\begin{itemize}
  \item [${\rm (S_i)}$]$\|T_tf\|_{\lpz}\le \|f\|_{\lpz}$ for all $p\in[1, \fz]$ and $t\in(0,\,\fz)$;
  \item[${\rm (S_{ii})}$] $T_t f\ge0$ for all $f\ge0$ and $t\in(0, \fz)$;
  \item [${\rm (S_{iii})}$] $T_t(1)=1$ for all $t\in (0,\,\fz)$.
\end{itemize}
\end{lem}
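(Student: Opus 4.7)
The plan is to reduce each item to standard facts about the Hankel transform $\mathcal H_\lambda$, which diagonalizes the Bessel operator in the sense that $\mathcal H_\lambda(\Delta_\lambda f)(\xi)=\xi^2\mathcal H_\lambda f(\xi)$ on a suitable core, together with the explicit Hankel–convolution form of the two kernels $W^{[\lambda]}$ and $P^{[\lambda]}$ recalled just before the lemma. I would organize the argument in four steps: (i) spectral/algebraic properties via the Hankel transform; (ii) positivity of the kernel and of the Hankel translation \eqref{Hankel trans}; (iii) the identity $T_t1=1$; (iv) $L^p$-contractivity by interpolating $L^1$ and $L^\infty$ bounds obtained from (ii) and (iii).

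For the first step, note that with $\mathcal H_\lambda$ realizing $\Delta_\lambda$ as a non-negative self-adjoint multiplication operator on $L^2(\R_+,dm_\lambda)$, the functional calculus gives that $e^{-t\Delta_\lambda}$ and $e^{-t\sqrt{\Delta_\lambda}}$ are well-defined bounded operators on $L^2$, satisfy $T_0=\mathrm{Id}$, the semigroup property $T_tT_s=T_{s+t}=T_sT_t$, and strong $L^2$-continuity $\lim_{t\to 0}T_tf=f$. Inverting the Hankel transform then recovers the explicit kernels: $T_tf=\varphi_t\sharp_\lambda f$ with $\varphi=P^{[\lambda]}$ (Poisson case) or $\varphi(x)=2^{(1-2\lambda)/2}e^{-x^2/2}/\Gamma(\lambda+1/2)$ (heat case, with dilation by $\sqrt{2t}$). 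Both $\varphi$ are manifestly nonnegative on $\R_+$.

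For property $(S_{ii})$, observe from \eqref{Hankel trans} that since $\lambda>0$ the measure $(\sin\theta)^{2\lambda-1}\,d\theta$ on $(0,\pi)$ is nonnegative, so the Hankel translation $\tau_x^{[\lambda]}\varphi_t(y)$ of a nonnegative $\varphi_t$ is nonnegative, hence $T_tf\ge 0$ whenever $f\ge 0$. For property $(S_{iii})$, the key computation is $\int_0^\infty P^{[\lambda]}(x)\,dm_\lambda(x)=1$ and $\int_0^\infty W^{[\lambda]}(x)\,dm_\lambda(x)=1$; these are standard evaluations of beta- and gamma-type integrals (the normalization constants in the definitions of $P^{[\lambda]}$ and $W^{[\lambda]}$ are precisely chosen for this). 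Inserting this into the Hankel convolution and using that $\tau_x^{[\lambda]}$ preserves the mass $\int_0^\infty(\cdot)\,dm_\lambda$ yields $T_t(1)(x)=1$ for every $x$ and $t$.

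For $(S_i)$, the $L^\infty$-bound follows at once from $(S_{ii})$ and $(S_{iii})$: $|T_tf(x)|\le T_t(|f|)(x)\le\|f\|_{L^\infty}T_t(1)(x)=\|f\|_{L^\infty}$. The $L^1$-bound follows from Fubini together with symmetry of the kernel in $x,y$ (which is visible either from self-adjointness or from the symmetry of $\tau_x^{[\lambda]}\varphi_t(y)$ in $(x,y)$) plus the mass-one identity; explicitly, $\int_0^\infty|T_tf(x)|\,dm_\lambda(x)\le\int_0^\infty|f(y)|\int_0^\infty\tau_x^{[\lambda]}\varphi_t(y)\,dm_\lambda(x)\,dm_\lambda(y)=\|f\|_{L^1}$. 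Riesz–Thorin interpolation then delivers the full range $p\in[1,\infty]$. The only step I expect to require actual checking rather than citation is the mass-one normalization for $P^{[\lambda]}$ and $W^{[\lambda]}$, and the symmetry of the Hankel-translated kernel in its two spatial variables; both are in \cite{bdt,h}, so in the paper I would give the short argument above and then refer to \cite{bdt,yy,wyz} for any remaining bookkeeping.
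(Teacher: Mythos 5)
Your argument is correct, but it is worth pointing out that the paper does not actually prove this lemma: it is stated as a recollection of known facts with a pointer to \cite{bdt,yy,wyz}, so there is no internal proof to compare against. What you have written is the standard self-contained justification, and all the steps check out. The functional calculus for the non-negative self-adjoint realization of $\Delta_\lz$ on $L^2(\R_+,dm_\lz)$ (via the Hankel transform) gives the semigroup identities, $T_0=Id$, commutativity and strong $L^2$-continuity; positivity of $\tau_x^{[\lz]}\varphi_t(y)$ follows from $\lz>0$ and the nonnegativity of $P^{[\lz]}$ and $W^{[\lz]}$; the normalizations $\int_0^\fz P^{[\lz]}\,dm_\lz=\int_0^\fz W^{[\lz]}\,dm_\lz=1$ do hold (a short beta/gamma computation confirms the constants in the paper's definitions are chosen for exactly this), and mass preservation of the Hankel translation then gives $T_t(1)=1$. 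Your route to ${\rm (S_i)}$ is also sound: the $L^\fz$ bound is immediate from ${\rm (S_{ii})}$ and ${\rm (S_{iii})}$, and the $L^1$ bound uses that $\tau_x^{[\lz]}\varphi_t(y)$ is visibly symmetric in $(x,y)$ from \eqref{Hankel trans}, so the kernel has unit mass in either variable; interpolation finishes. The only caveats are presentational: the paper's sign convention for $\tbz$ in \eqref{bessel laplace equation} is the negative of the operator whose square root and exponential are being taken, so you should state explicitly that $\Delta_\lz$ is understood as the non-negative self-adjoint operator with $\mathcal H_\lz(\Delta_\lz f)(\xi)=\xi^2\mathcal H_\lz f(\xi)$; and since the paper treats this lemma as citable background, in the actual text one would simply keep the references rather than reproduce your (correct) computation.
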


Next we recall the definitions of the Poisson kernel and conjugate Poisson kernel.
For any $t,\,x,\,y\in(0, \fz)$,
\begin{equation*}
P^{[\lz]}_tf(x):=\inzf P^{[\lz]}_t(x,y)f(y)y^{2\lz}\,dy,
\end{equation*}
where
\begin{equation*}
P^{[\lz]}_t(x,y)=\frac{2\lz t}{\pi}\int_0^\pi\frac{(\sin\theta)^{2\lz-1}}{(x^2+y^2+t^2-2xy\cos\theta)^{\lz+1}}\,d\theta.
\end{equation*}
See \cite{w,bdt}.

If $f\in\lpz$, $p\in[1, \fz)$, the $\Delta_\lz$-conjugate of $f$ is defined by setting,
for any $t,\,x,\,y\in(0, \fz)$,
\begin{equation}\label{conj poi defn}
\qlz(f)(x):=\int_0^\fz\qlz(x, y)f(y)\, dm_\lz(y),
\end{equation}
where
\begin{equation*}
\begin{array}[b]{cl}
\qlz(x, y)&:
=-\dfrac{2\lz}{\pi}\dint_0^\pi\dfrac{(x-y\cos\theta)(\sin\theta)^{2\lz-1}}
{(x^2+y^2+t^2-2xy\cos\theta)^{\lz+1}}\,d\theta;
\end{array}
\end{equation*}
see \cite[p.\,84]{ms}. We point out that
there exists the boundary value function $\lim_{t\to0}\qlz(f)(x)$
for almost every $x\in (0, \fz)$ (see \cite[p.\,84]{ms}),
which is defined to be the Riesz transform $\riz(f)$, .i.e,
\begin{align}\label{riz}
\riz(f)(x):=\lim_{t\to0}\qlz(f)(x) = \int_{\R_+}  -\dfrac{2\lz}{\pi}\dint_0^\pi\dfrac{(x-y\cos\theta)(\sin\theta)^{2\lz-1}}
{(x^2+y^2-2xy\cos\theta)^{\lz+1}}\,d\theta \ f(y) \dmz(y).
\end{align}
Moreover, we note that $u(t,x):=\plz(f)(x)$ satisfies \eqref{bessel laplace equation} and
 that $u(t,x):=\plz(f)(x)$ and $v(t,x):=\qlz(f)(x)$ satisfy the Cauchy--Riemann equations \eqref{CR}.

\begin{prop}[\cite{yy,wyz}]\label{p-aoti}
For any fixed $t$ and $x\in\R_+$,
$ \plz(x, \cdot)$, $ \qlz(x, \cdot)$, $ \pplz(x, \cdot)$ and $ t\pa_y\plz(x, \cdot)$
as  functions of $x$ are in $\GGone(\beta,\gamma)$ for all $\beta,\gamma\in (0,1]$;  symmetrically,  for any
fixed $t$ and $y\in\R_+$, $\pplz(\cdot, y)$  and $ t\pa_y\plz(\cdot, y)$ are in $\GGone(\beta,\gamma)$ for all $\beta,\gamma\in (0,1]$.
\end{prop}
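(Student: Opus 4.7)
The plan is to verify, for each of the four kernels, the size, smoothness, and cancellation conditions of Definition 2.1 relative to the natural center and scale $(x,t)$, and then invoke the fact (recorded just after Definition 2.1) that $\cg(x,t,\beta,\gamma)=\cg(\beta,\gamma)$ with equivalent norms. Since the cancellation reduces the question to membership in the completion $\GGone(\beta,\gamma)$, one needs to verify: (i) a pointwise size bound of the form
\[
|K(x,y)|\lesssim \frac{1}{m_\lz(I(x,t))+m_\lz(I(y,t))+m_\lz(I(x,|x-y|))}\Big(\frac{t}{t+|x-y|}\Big)^\gamma;
\]
(ii) a corresponding Hölder smoothness estimate in $y$; and (iii) the mean-zero condition $\int K(x,y)\,dm_\lz(y)=0$ for the three cancellatory kernels.

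First I would establish the basic size bound for $\plz(x,y)$ from the integral representation recalled just before the proposition by splitting the $\theta$-integral on $[0,\pi/2]$ and $[\pi/2,\pi]$ and using the elementary inequality $x^2+y^2-2xy\cos\theta\gtrsim \sin^2(\theta/2)(x+y)^2+|x-y|^2$. This yields $\plz(x,y)\lesssim t/[(t+|x-y|)\cdot m_\lz(I(x,t+|x-y|))]$, which is the required size estimate with any $\gamma\in(0,1]$. Differentiating under the integral sign in $y$ and in $t$ produces analogous pointwise bounds for $t\pa_y\plz(x,y)$ and $\pplz(x,y)=t\pa_t\plz(x,y)$ with an extra decay factor $1/(t+|x-y|)$ in the former case. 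A second $y$-differentiation, together with the mean-value theorem, yields the Hölder($\beta$) smoothness in $y$ for $\beta\in(0,1]$. The conjugate kernel $\qlz(x,y)$ is handled identically from its explicit integral representation.

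For the cancellation, I would use the conservation property $e^{-t\sqrt{\tbz}}(1)=1$ from Lemma \ref{l-seimgroup prop}, which gives $\int_{\R_+}\plz(x,y)\,\dmz(y)=1$. Differentiating in $t$ immediately produces $\int \pplz(x,y)\,\dmz(y)=0$. For $\qlz$, use the Cauchy--Riemann equation $\pa_x \plz=-\pa_t \qlz$: integrating in $y$ and using that $\int \plz(x,y)\,\dmz(y)\equiv 1$ is independent of $x$ yields $\pa_t\int \qlz(x,y)\,\dmz(y)=0$; combining this with the decay $\qlz(x,y)\to 0$ as $t\to\fz$ (which follows from the size estimate) gives $\int \qlz(x,y)\,\dmz(y)=0$. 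Finally, the Cauchy--Riemann equation $\pa_y\plz=-\pa_t\qlz$ gives $\int t\pa_y\plz\,\dmz(y)=-t\pa_t\int \qlz\,\dmz=0$. The symmetric statements in the other variable follow either by repeating the analysis in $x$ using the symmetry $\plz(x,y)=\plz(y,x)$ and the analogous symmetry of the derivative kernels, or by differentiating in $x$ directly.

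The main obstacle is the pointwise size and smoothness estimates for the Poisson and conjugate Poisson kernels, which require careful handling of the singular integrand in the $\theta$-integral and the non-standard measure $\dmz$. The cancellation step, while conceptually simple, depends on combining the conservation property, the Cauchy--Riemann equations, and the decay as $t\to\fz$, all of which must be justified via the size bounds established first; in particular the interchange of differentiation and integration must be justified using the pointwise bounds produced in the size-estimate step.
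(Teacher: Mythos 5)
The paper does not actually prove this proposition---it is imported from \cite{yy,wyz}---and your plan for the size and smoothness bounds (splitting the $\theta$-integral in the explicit representations of $\plz$ and $\qlz$, differentiating under the integral sign) is indeed the standard route taken there. The genuine problems are in your cancellation step. First, the identity you invoke, $\pa_y\plz(x,y)=-\pa_t\qlz(x,y)$, is not one of the Cauchy--Riemann equations: \eqref{CR} holds in the variables $(t,x)$, i.e. $\pa_x\plz(x,y)=-\pa_t\qlz(x,y)$, and using the symmetry of $\plz$ in $(x,y)$ one only gets $\pa_y\plz(x,y)=-\pa_t\big(\qlz(y,x)\big)$, with the \emph{transposed} conjugate kernel, whose integral in $y$ is not constant in $t$. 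In fact the conclusion you draw is false: integrating by parts in $y$ (the boundary terms vanish) gives $\int_0^\fz t\,\pa_y\plz(x,y)\,\dmz(y)=-2\lz t\int_0^\fz \plz(x,y)\,y^{2\lz-1}\,dy\neq 0$, because the weight $y^{2\lz}$ is not translation invariant; the natural mean-zero object in this setting is $\big(\pa_y+\tfrac{2\lz}{y}\big)\plz(x,\cdot)$, not $\pa_y\plz(x,\cdot)$. So this part of your argument cannot work as written. (Read literally against the definition of $\GGone(\beta,\gamma)$ as the completion of the mean-zero class $\cg_0(1,1)$---and the mean is a continuous functional on $\cg(\beta,\gamma)$ for $\gamma>0$---the same issue already affects $\plz(x,\cdot)$ itself, whose mean is $1$; the statement must be read as membership in the test class $\cg(x,t,\beta,\gamma)$, with cancellation only for $\qlz(x,\cdot)$ and $t\pa_t\plz(x,\cdot)$, which is how the cited sources formulate it. Your proposal neither flags this for $\plz$ nor avoids the false cancellation claim for $t\pa_y\plz$.)

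Second, in your proof that $\int_0^\fz\qlz(x,y)\,\dmz(y)=0$, the last step---``the decay $\qlz(x,y)\to0$ as $t\to\fz$ follows from the size estimate, hence the integral vanishes''---is a real gap: pointwise decay does not control the integral, and the size bound $|\qlz(x,y)|\ls \big[m_\lz(I(x,t))+m_\lz(I(x,|x-y|))\big]^{-1}$ (even with an extra factor $\big(\tfrac{t}{t+|x-y|}\big)^\gamma$) only yields $\int_0^\fz|\qlz(x,y)|\,\dmz(y)=O(1)$ uniformly in $t$; compare $\plz$, which satisfies a bound of exactly this shape while its integral is identically $1$. To conclude that the limit as $t\to\fz$ is $0$ you must exploit the cancellation $\int_0^\pi\cos\theta\,(\sin\theta)^{2\lz-1}\,d\theta=0$ hidden in the numerator $x-y\cos\theta$ (which shows the integral is $O(x/t)$), or prove the vanishing mean directly as in \cite{yy}. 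Likewise, the interchange of $\pa_t$ with the $y$-integral in the constancy-in-$t$ step requires integrable domination of $\pa_t\qlz(x,\cdot)$, which again needs the sharpened estimates, not just the basic size bound.
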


Based on Definition \ref{def-of-test-func-on-M times M} and Proposition \ref{p-aoti}, we further point out that
\begin{prop}\label{pp-aoti}
For any fixed $t_1$, $t_2$, $x_1$ and $x_2\in\R_+$,
$$\plzo(x_1, \cdot)\plzt(x_2,\cdot),\, \plzo(x_1, \cdot)\qlzt(x_2,\cdot),\,\qlzo(x_1, \cdot)\plzt(x_2,\cdot),\,\qlzo(x_1, \cdot)\qlzt(x_2,\cdot)$$
as functions of $(x_1,x_2)$ is in $\GGpone(\beta_1,\beta_2;\gamma_1,\gamma_2)$ for all $\beta_1,\gamma_1,\beta_2,\gamma_2\in (0,1]$.

\end{prop}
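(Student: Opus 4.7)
The plan is to exploit the tensor-product structure of the four kernels listed and reduce the product-space claim directly to the one-parameter result already proved in Proposition \ref{p-aoti}. Each of the four functions has the form $F(y_1, y_2) = K^{(1)}(y_1)\,K^{(2)}(y_2)$ with $K^{(1)} \in \{\plzo(x_1, \cdot),\,\qlzo(x_1, \cdot)\}$ and $K^{(2)} \in \{\plzt(x_2, \cdot),\,\qlzt(x_2, \cdot)\}$, and Proposition \ref{p-aoti} immediately gives $K^{(1)} \in \GGone(\beta_1, \gamma_1)$ and $K^{(2)} \in \GGone(\beta_2, \gamma_2)$ for all $\beta_i, \gamma_i \in (0, 1]$, with quantitative control of the norms inherited from the size and smoothness estimates of the Bessel (conjugate) Poisson kernels.

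I would then verify the defining estimates (i) and (ii) of Definition \ref{def-of-test-func-on-M times M} for $F$ by a factorization argument. Since the one-parameter $\cg$-norm is positively homogeneous, for each fixed $y_2$ we have $\|F(\cdot, y_2)\|_{\cg(x_1, t_1, \beta_1, \gamma_1)} = |K^{(2)}(y_2)|\,\|K^{(1)}\|_{\cg(x_1, t_1, \beta_1, \gamma_1)}$, and the pointwise size bound on $K^{(2)}$ from Definition \ref{def-of-test-func-space}(i) supplies exactly the decay factor in $y_2$ required by (i). Bilinearity rewrites $F(\cdot, y_2) - F(\cdot, \wz y_2) = K^{(1)}(\cdot)\bigl(K^{(2)}(y_2) - K^{(2)}(\wz y_2)\bigr)$, whose $\cg(x_1, t_1, \beta_1, \gamma_1)$-norm factors in the same way; the Hölder bound on $K^{(2)}$ from Definition \ref{def-of-test-func-space}(ii) then produces the $\beta_2$-power in $|y_2 - \wz y_2|$ demanded by (ii). The symmetric pair of conditions with the roles of $y_1$ and $y_2$ interchanged is proved identically.

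To upgrade the conclusion from $F \in \cg(\beta_1, \beta_2; \gamma_1, \gamma_2)$ to the stronger statement $F \in \GGpone(\beta_1, \beta_2; \gamma_1, \gamma_2)$, I would use that Proposition \ref{p-aoti} in fact places each factor in the closure of the mean-zero test functions: picking approximating sequences $f_n \to K^{(1)}$ in $\cg(\beta_1, \gamma_1)$ and $g_m \to K^{(2)}$ in $\cg(\beta_2, \gamma_2)$ with $f_n, g_m \in \cg_0(1,1)$, the tensor products $f_n \otimes g_m$ lie in $\cg_0(1, 1; 1, 1)$, and the same factorization-of-norms argument together with the triangle inequality gives $f_n \otimes g_m \to F$ in $\cg(\beta_1, \beta_2; \gamma_1, \gamma_2)$, placing $F$ in the required completion.

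The only real obstacle is bookkeeping: the argument must be run through all four combinations of Poisson and conjugate Poisson factors and in both orderings of the variables, and the re-centering and rescaling invariance noted just after Definition \ref{def-of-test-func-on-M times M} must be invoked to move the base point from $(x_1, x_2)$ to the fixed reference point of $\GGpone(\beta_1, \beta_2; \gamma_1, \gamma_2)$. No new analytic input is needed beyond Proposition \ref{p-aoti}; the product-space membership is forced by the tensor-product structure and the factorization of the relevant norms.
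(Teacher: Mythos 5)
Your argument is correct and is essentially the argument the paper intends: the paper states Proposition \ref{pp-aoti} without a written proof, presenting it as an immediate consequence of Definition \ref{def-of-test-func-on-M times M} and the one-parameter Proposition \ref{p-aoti}, and your tensor-product factorization of the $\cg$-norms (size and H\"older conditions in each variable separately, plus the symmetric conditions) is exactly how that reduction goes. Your extra care in approximating by $f_n\otimes g_m$ with $f_n,g_m\in\cg_0(1,1)$ to land in the completion $\GGpone(\beta_1,\beta_2;\gamma_1,\gamma_2)$ simply makes explicit a step the paper leaves implicit (and inherits whatever is asserted in Proposition \ref{p-aoti} about membership in $\GGone$), so there is no gap beyond what the paper itself assumes.
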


\subsection{$L^p$ boundedness of the product Littlewood--Paley area functions and square functions}

In this subsection, we provide the $L^p$ boundedness of a general version of the product Littlewood--Paley area functions and square functions for $1<p<\infty$, which covers  $S(f)$ and $S_u(f)$ defined in \eqref{esf} and \eqref{esfu}, respectively.


To begin with,
for $i=1,2$, we consider the integral operators $\left\{Q^{(i)}_{t_i}\right\}_{t_i>0}$ associated with the kernels $K^{(i)}_{t_i}(x_i,y_i)$. 
Assume that $K^{(i)}_{t_i}(x_i,y_i)$ satisfies the following properties (for the sake of simplicity, when we state these properties we drop the superscript $i$):
\begin{itemize}
\item[${\rm (K_i)}$] for any $x,\,y,\,t \in\R_+$,
\begin{equation*}
|K_t(x, y)|\ls \frac1{m_\lz(I(x, t))+m_\lz(I(y,t))+m_\lz(I(x, |x-y|))}\frac t{|x-y|+t};
\end{equation*}
  \item [${\rm (K_{ii})}$] for any $x,y,\wz y,t \in\R_+$ with $|y-\wz y|\le (t+|x-y|)/2$,
\begin{equation*}
|K_t(x, y)-K_t(x,\wz y)|\ls \frac1{m_\lz(I(x, t))+m_\lz(I(y,t))+m_\lz(I(x, |x-y|))}\frac {t|y-\wz y|}{(|x-y|+t)^2};
\end{equation*}
  \item [${\rm (K_{iii})}$] for any $t,\, x\in\R_+$,
\begin{equation*}
K_t(1)(x):=\inzf K_t(x, y)\dmzy=0.
\end{equation*}
\end{itemize}

%
%

We now provide the $L^p$ (for $1<p<\infty$) boundedness of the product Littlewood--Paley square functions associated with the operators $Q^{(1)}_{t_1}$ and $Q^{(2)}_{t_2}$, which will be needed in Section 4.
 To be precise, we have


\begin{thm}\label{thm product Littlewood--Paley}
Let $p\in(1, \fz)$, $Q^{(1)}_{t_1}$ and $Q^{(2)}_{t_2}$ be the same as above. Then for every $g\in L^p(\rlz)$, and almost all $x_2$, we have that
\begin{eqnarray}\label{Littlewood--Paley}
\bigg\| \left\{  \int_0^\fz \Big|Q^{(1)}_{t_1}(g(\cdot,x_2))(x_1)\Big|^2 {dt_1\over t_1} \right\}^{1/2}\bigg\|_{L^p(\R_+,\,\dmz(x_1))}\ls \|g(x_1,x_2)\|_{L^p(\R_+,\,\dmz(x_1))},
\end{eqnarray}
and similar result holds for $Q^{(2)}_{t_2}$,
and that
\begin{eqnarray}\label{product Littlewood--Paley}
\bigg\| \left\{  \int_0^\fz\int_0^\fz \Big|Q^{(1)}_{t_1}Q^{(2)}_{t_2}(g)(\cdot,\cdot)\Big|^2 {dt_1\over t_1}{dt_2\over t_2} \right\}^{1\over2}\bigg\|_{L^p(\rlz)}\ls \|g\|_{L^p(\rlz)}.
\end{eqnarray}

\end{thm}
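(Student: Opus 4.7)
The plan is to view $Tf := \{Q_t f\}_{t>0}$ as an $L^2(dt/t)$-valued Calder\'on--Zygmund operator on the space of homogeneous type $(\R_+, |\cdot|, \dmz)$, to prove \eqref{Littlewood--Paley} by almost orthogonality plus vector-valued Calder\'on--Zygmund theory, and then to deduce \eqref{product Littlewood--Paley} by iterating a Fefferman--Stein-type Hilbert-valued extension of \eqref{Littlewood--Paley} in the two variables. Drop the superscript and work with a generic $\{Q_t\}_{t>0}$ whose kernel satisfies (K$_{\rm i}$)--(K$_{\rm iii}$). Fix an approximation to the identity $\{S_k\}_{k\in\Z}$ as in Definition \ref{def-ati} and set $D_k := S_k - S_{k-1}$; the discrete Calder\'on-type reproducing formula on $(\R_+, \dmz)$ gives $\sum_k D_k = \mathrm{Id}$ with $\|f\|_{\ltz}^2 \sim \sum_k \|D_k f\|_{\ltz}^2$. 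A Schur-type kernel estimate for the composition $Q_t D_k$, pairing (K$_{\rm iii}$) against the size of $D_k$ when $t2^k \le 1$ and (K$_{\rm ii}$) against the cancellation of $D_k$ when $t2^k > 1$, yields an almost orthogonality bound of the form
$$\|Q_t D_k\|_{\ltz \to \ltz} \lesssim \min\bigl(t2^k, (t2^k)^{-1}\bigr)^{\epsilon}$$
for some $\epsilon \in (0, 1)$. Summing in $k$ and integrating in $dt/t$ yields $\int_0^\infty \|Q_t f\|_{\ltz}^2 \frac{dt}{t} \lesssim \|f\|_{\ltz}^2$, which is \eqref{Littlewood--Paley} at $p = 2$.

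To pass to general $p \in (1, \infty)$, consider the $L^2(dt/t)$-valued kernel $t \mapsto K_t(x, y)$ of $T$. By (K$_{\rm i}$) and (K$_{\rm ii}$), this kernel satisfies the standard size and H\"ormander-type smoothness estimates with values in $L^2(dt/t)$; combined with the $L^2$ bound established above, vector-valued Calder\'on--Zygmund theory on $(\R_+, |\cdot|, \dmz)$ gives the weak-type $(1, 1)$ inequality for $T$, and hence \eqref{Littlewood--Paley} for all $1 < p < \infty$ by interpolation and duality. The very same argument, carried out with $L^2(dt/t)$ replaced by $L^2(dt/t; H)$ for an auxiliary Hilbert space $H$, yields the Hilbert-valued extension
$$\Bigl\|\Bigl(\int_0^\infty \|Q_t F\|_H^2 \frac{dt}{t}\Bigr)^{1/2}\Bigr\|_{L^p(\R_+,\,\dmz)} \lesssim \|F\|_{L^p(\R_+,\,\dmz;\, H)}.$$

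For \eqref{product Littlewood--Paley}, apply this Hilbert-valued extension to $Q^{(1)}$ with $H := L^2(dt_2/t_2)$ and with the $H$-valued function $F(x_1, x_2) := \{Q^{(2)}_{t_2} g(x_1, x_2)\}_{t_2 > 0}$. For each fixed $x_2$, the Hilbert-valued bound applied to $F(\cdot, x_2)$ in $x_1$ gives
$$\int_{\R_+} \Bigl(\int_0^\infty \int_0^\infty |Q^{(1)}_{t_1} Q^{(2)}_{t_2} g(x_1, x_2)|^2 \frac{dt_1}{t_1}\frac{dt_2}{t_2}\Bigr)^{p/2} \dmz(x_1) \lesssim \int_{\R_+} \Bigl(\int_0^\infty |Q^{(2)}_{t_2} g(x_1, x_2)|^2 \frac{dt_2}{t_2}\Bigr)^{p/2} \dmz(x_1).$$
Integrating in $x_2$, applying Fubini, and then invoking the scalar one-parameter bound \eqref{Littlewood--Paley} for $Q^{(2)}$ in the $x_2$-variable produces \eqref{product Littlewood--Paley}. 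The main obstacle is the almost orthogonality estimate in the first step: one must carefully exploit (K$_{\rm iii}$) and (K$_{\rm ii}$) in complementary regimes together with the doubling of $\dmz$ on intervals of $(\R_+, |\cdot|)$. Once this is in place, both the passage from $L^2$ to $L^p$ and the iteration to the product setting are standard applications of vector-valued Calder\'on--Zygmund theory.
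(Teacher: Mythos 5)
Your $L^{2}$ step (almost orthogonality of $Q_{t}D_{k}$ via the complementary regimes $t2^{k}\le 1$ and $t2^{k}>1$) and your iteration scheme for \eqref{product Littlewood--Paley} (Hilbert-valued extension in $x_{1}$ with $H=L^{2}(dt_{2}/t_{2})$, then Fubini and the scalar bound in $x_{2}$) are sound, and for $1<p\le 2$ the vector-valued Calder\'on--Zygmund route does work: the $L^{2}(dt/t)$-valued kernel $t\mapsto K_{t}(x,y)$ has the required size and H\"ormander regularity \emph{in the second variable} by ${\rm (K_i)}$ and ${\rm (K_{ii})}$ (one needs the reverse doubling $m_\lz(2I)\ge \min(2,2^{2\lz})m_\lz(I)$ to make the integral over $t\ge|x-y|$ converge), so the weak $(1,1)$ bound and interpolation give $1<p\le2$. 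The genuine gap is the phrase ``interpolation and duality'' for $2<p<\infty$. Duality replaces $T$ by its adjoint, whose $L^{2}(dt/t)$-valued kernel is $(x,y)\mapsto K_{\cdot}(y,x)$, and the Calder\'on--Zygmund argument for the adjoint requires a H\"ormander condition in the \emph{first} variable of $K_{t}$, i.e.\ smoothness of $x\mapsto K_{t}(x,y)$. This is not among the hypotheses: ${\rm (K_i)}$--${\rm (K_{iii})}$ give only size, smoothness in $y$, and cancellation in $y$, and the theorem is applied in Section 4 to kernels (the component $\psi(t,x,y)$ of Lemma \ref{lem: constru of func psi}) for which only these one-sided conditions are established. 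So as written your argument proves \eqref{Littlewood--Paley} and \eqref{product Littlewood--Paley} only for $1<p\le 2$; the range $p>2$ does not follow by duality without an additional regularity assumption.

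The paper's proof avoids this issue by never asking $Q_{t}$ for more than ${\rm (K_{ii})}$ and ${\rm (K_{iii})}$: it expands $g$ by the discrete Calder\'on reproducing formula of \cite{HLL2} and estimates $Q_{t}\widetilde D_{k}$ by almost orthogonality, borrowing all needed smoothness and cancellation from the two-sided regular approximation to the identity $\widetilde D_{k}$ (cancellation of $Q_{t}$ when $t\ls 2^{-k}$, smoothness of $Q_{t}(x,\cdot)$ when $t\gs 2^{-k}$); the resulting expression is dominated pointwise by iterated maximal functions of the coefficients $D^{(1)}_{k_1}D^{(2)}_{k_2}(g)$, and the Fefferman--Stein vector-valued maximal inequality together with $\|S_{d}(g)\|_{L^p(\rlz)}\ls\|g\|_{L^p(\rlz)}$ gives the full range $1<p<\infty$ at once (the same scheme yields the one-parameter bound and Theorem \ref{thm product Littlewood--Paley area}). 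To repair your argument you would either have to add first-variable regularity of $K_{t}$ to the hypotheses (true for the Bessel Poisson and heat kernels, but not for the general class the theorem is stated for), or replace the duality step for $p>2$ by an argument of this reproducing-formula/maximal-function type.
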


\begin{proof}
To begin with, we need to apply the general product discrete Littlewood--Paley theory on spaces of homogeneous type to our Bessel setting, by considering
$(X_i,d_i,\mu_i):=(\R_+,|\cdot|,dm_\lz)$ for $i=1,2$, i.e., $\X:=\R_\lz$.
We note that in this product setting $\X$, we already have the discrete Littlewood--Paley theory (we refer to Theorem 2.14 and Proposition 2.16 in \cite{HLL2}), stated as follows: for $1<p<\infty$,
\begin{align}\label{discrete Littlewood--Paley}
\big\| S_d(g) \big\|_{L^p(\rlz)}\ls \|g\|_{L^p(\rlz)}.
\end{align}

We first prove \eqref{product Littlewood--Paley}. To this end, it suffices to prove that
\begin{eqnarray}\label{product Littlewood--Paley eee}
\bigg\| \left\{  \int_0^\fz\int_0^\fz \Big|Q^{(1)}_{t_1}Q^{(2)}_{t_2}(g)(\cdot,\cdot)\Big|^2 {dt_1\over t_1}{dt_2\over t_2} \right\}^{1\over2}\bigg\|_{L^p(\rlz)}\ls \big\| S_d(g) \big\|_{L^p(\rlz)}.
\end{eqnarray}
To see this, we now recall Calder\'on's reproducing formula (Theorem 2.9 in \cite{HLL2})
\begin{align}\label{reproducing}
g(x_1,x_2)&=   \sum_{k_1}\sum_{I_1 \in\mathscr{X}^{k_1+N_1}} \sum_{k_2}\sum_{I_2 \in\mathscr{X}^{k_2+N_2}}  m_\lambda(I_1)m_\lambda(I_2) \\
&\quad\quad\times\tilde{D}^{(1)}_{k_1}(x_1,x_{I_1})\tilde{D}^{(2)}_{k_2}(x_2,x_{I_2})D^{(1)}_{k_1}D^{(2)}_{k_2}(g)(x_{I_1},x_{I_2}),\noz
\end{align}
where the series converges in the sense of $L^p(\rlz)$, for $i:=1,2$, $\tilde{D}^{(i)}_{k_i}$  satisfies the same size, smoothness and cancellation conditions as $D^{(i)}_{k_i}$ does.

Observe that $Q^{(1)}_{t_1}Q^{(2)}_{t_2}$ is bounded on $\lpzd$.
Applying Calder\'on's reproducing formula to the left-hand side of \eqref{product Littlewood--Paley eee}, we obtain that
\begin{align}\label{eee1}
 \mathbb L&:=\int_0^\fz\int_0^\fz \big|Q^{(1)}_{t_1}Q^{(2)}_{t_2}(g)(x_1,x_2)\big|^2 {dt_1\over t_1}{dt_2\over t_2}\\
 &=\sum_{\wz k_1\in\mathbb Z}\sum_{\wz k_2\in\mathbb Z}  \int_{2^{-\wz k_1-1}}^{2^{-\wz k_1}}\int_{2^{-\wz k_2-1}}^{2^{-\wz k_2}} \bigg| \sum_{k_1}\sum_{I_1 \in\mathscr{X}^{k_1+N_1}} \sum_{k_2}\sum_{I_2 \in\mathscr{X}^{k_2+N_2}}  m_\lambda(I_1)m_\lambda(I_2)\nonumber\\
 &\quad \quad\times Q^{(1)}_{t_1}(\tilde{D}^{(1)}_{k_1}(\cdot,x_{I_1}))(x_1)Q^{(2)}_{t_2}(\tilde{D}^{(2)}_{k_2}(\cdot,x_{I_2}))(x_2) D^{(1)}_{k_1}D^{(2)}_{k_2}(g)(x_{I_1},x_{I_2})\bigg|^2 {dt_1\over t_1}{dt_2\over t_2}.\nonumber
\end{align}
Note that in this case, $t_1\sim 2^{-\wz k_1}$ and $t_2\sim 2^{-\wz k_2}$, hence,  $$Q^{(1)}_{t_1}(\tilde{D}^{(1)}_{k_1}(\cdot,x_{I_1}))(x_1)Q^{(2)}_{t_2}(\tilde{D}^{(2)}_{k_2}(\cdot,x_{I_2}))(x_2) $$
 satisfies the following almost orthogonality estimate (see Lemma 2.11 in \cite{HLL2}):  for $\epsilon \in(0, 1)$,
\begin{align}\label{eee2}
&\big|Q^{(1)}_{t_1}(\tilde{D}^{(1)}_{k_1}(\cdot,x_{I_1}))(x_1) Q^{(2)}_{t_2}(\tilde{D}^{(2)}_{k_2}(\cdot,x_{I_2}))(x_2)\big|\\
&\quad\ls \prod_{i=1}^2  2^{-|k_i-\wz k_i|\epsilon}\bigg(\frac {2^{-k_i}+2^{-\wz k_i}}{|x_i-x_{I_i}|+2^{-k_i}+2^{-\wz k_i}}\bigg)^\epsilon  \nonumber\\
&\quad\quad\times   \frac1{m_\lz(I(x_i, 2^{-k_i}+2^{-\wz k_i} ))+m_\lz(I(x_{I_i}, 2^{-k_i}+2^{-\wz k_i}))+m_\lz(I(x_i, |x_i-x_{I_i}|))}.\nonumber
\end{align}
Note that the right-hand side of the above inequality is independent of $t_1$ and $t_2$. By substituting \eqref{eee2} back to the right-hand side of \eqref{eee1}, we have that
\begin{align*}
 \mathbb L&\ls\sum_{\wz k_1}\sum_{\wz k_2}   \bigg| \sum_{k_1}\sum_{I_1 \in\mathscr{X}^{k_1+N_1}} \sum_{k_2}\sum_{I_2 \in\mathscr{X}^{k_2+N_2}}   \prod_{i=1}^2m_\lambda(I_i)  2^{-|k_i-\wz k_i|\epsilon}\bigg(\frac {2^{-k_i}+2^{-\wz k_i}}{|x_i-x_{I_i}|+2^{-k_i}+2^{-\wz k_i}}\bigg)^\epsilon \\
 &\quad \quad\times\frac1{m_\lz(I(x_i, 2^{-k_i}+2^{-\wz k_i} ))+m_\lz(I(x_{I_i}, 2^{-k_i}+2^{-\wz k_i}))+m_\lz(I(x_i, |x_i-x_{I_i}|))}\\ &\quad \quad\times \lf|D^{(1)}_{k_1}D^{(2)}_{k_2}(g)(x_{I_1},x_{I_2})\r| \bigg|^2.
\end{align*}
Then, based on the estimates in the proof of Theorem 2.10 in \cite[pp.\, 335--336]{HLL2}, we have the following estimate:
\begin{align*}
 \mathbb L
& \ls \sum_{k_1} \sum_{k_2} 2^{-|k_1-\wz k_1|\epsilon}2^{-|k_2-\wz k_2|\epsilon} 2^{[(k_1\wedge \wz k_1)-k_1](2\lz+1)(1-{1\over r})}2^{[(k_2\wedge \wz k_2)-k_2](2\lz+1)(1-{1\over r})}\\
 &\quad\times\Bigg[ \mathcal{M}_1\bigg( \sum_{I_1 \in\mathscr{X}^{k_1+N_1}}  \mathcal{M}_2\Big( \sum_{I_2 \in\mathscr{X}^{k_2+N_2}}   \inf_{\gfz{y_1\in I_1}{y_2\in I_2}}\lf|D^{(1)}_{k_1}D^{(2)}_{k_2}(g)(y_1,y_2)\r|^r  \chi_{I_2}(\cdot)\Big)(x_2) \chi_{I_1}(\cdot) \bigg)(x_1) \Bigg]^{2\over r},
\end{align*}
where $r<1$, $a\wedge b:=\min\{a,b\}$, and
\begin{equation*}
\mathcal{M}_1f(x_1,x_2):=\dsup_{I \ni x_1}\dfrac1{m_\lz(I)}\dint_I |f(y_1, x_2)|\,\dmz(y_1),
\end{equation*}
and
\begin{equation*}
\mathcal{M}_2f(x_1,x_2):=\dsup_{J \ni x_2}\dfrac1{m_\lz(J)}\dint_J |f(x_1, y_2)|\,\dmz(y_2).
\end{equation*}
By taking the square root and then the $L^p$ norm on both sides of the above inequality and then using Fefferman--Stein's vector-valued maximal function inequality (\cite{HLL2}), we obtain that
\begin{eqnarray}\label{eeeee3}
&&\bigg\| \left\{  \int_0^\fz\int_0^\fz \lf|Q^{(1)}_{t_1}Q^{(2)}_{t_2}(g)(\cdot,\cdot)\r|^2 {dt_1\over t_1}{dt_2\over t_2} \right\}^{1/2}\bigg\|_{L^p(\rlz)}\\
&&\quad\ls \bigg\| \bigg\{  \sum_{k_1}\sum_{I_1 \in\mathscr{X}^{k_1+N_1}} \sum_{k_2}\sum_{I_2 \in\mathscr{X}^{k_2+N_2}} \lf|D^{(1)}_{k_1}D^{(2)}_{k_2}(g)(x_{I_1},x_{I_2})\r|^2 \chi_{I_1}(\cdot)\chi_{I_2}(\cdot) \bigg\}^{1/2}\bigg\|_{L^p(\rlz)}\noz\\
&&\quad= \big\| S_d(g) \big\|_{L^p(\rlz)},\noz
\end{eqnarray}
which implies that \eqref{product Littlewood--Paley eee} holds. Hence, we have that \eqref{product Littlewood--Paley} holds.

Following the same steps above, we now sketch the proof of \eqref{Littlewood--Paley}.  Applying the following version of Calder\'on's reproducing formula
\begin{align}\label{reproducing one para}
g(x_1,x_2)&=   \sum_{k_1}\sum_{I_1 \in\mathscr{X}^{k_1+N_1}}  m_\lambda(I_1)
\tilde{D}^{(1)}_{k_1}(x_1,x_{I_1})D^{(1)}_{k_1}(g)(x_{I_1},x_{2})
\end{align}
to the left-hand side of \eqref{Littlewood--Paley},
%
we obtain that
\begin{align}\label{eee111}
 \wz {\mathbb L}&:=\int_0^\fz \lf|Q^{(1)}_{t_1}(g)(x_1,x_2)\r|^2 {dt_1\over t_1}\\
 &=\sum_{\wz k_1\in\mathbb Z}\sum_{\wz k_2\in\mathbb Z}  \int_{2^{-\wz k_1-1}}^{2^{-\wz k_1}}\bigg| \sum_{k_1}\sum_{I_1 \in\mathscr{X}^{k_1+N_1}}  m_\lambda(I_1) Q^{(1)}_{t_1}(\tilde{D}^{(1)}_{k_1}(\cdot,x_{I_1}))(x_1)D^{(1)}_{k_1}(g)(x_{I_1},x_{2})\bigg|^2 {dt_1\over t_1}.\nonumber
\end{align}
Then using the almost orthogonality estimate for $Q^{(1)}_{t_1}(\tilde{D}^{(1)}_{k_1}(x_1,x_{I_1}))$, and based on the estimates in the proof of Theorem 2.10 in \cite[pp.\, 335--336]{HLL2}, we have the following estimate:
\begin{align*}
 \wz{\mathbb L}
 \ls \sum_{k_1} 2^{-|k_1-\wz k_1|\epsilon}2^{[(k_1\wedge \wz k_1)-k_1](2\lz+1)(1-{1\over r})}
 \Bigg[ \mathcal{M}_1\bigg( \sum_{I_1 \in\mathscr{X}^{k_1+N_1}}    \inf_{y_1\in I_1}\lf|D^{(1)}_{k_1}(g)(y_1,x_2)\r|^r   \chi_{I_1}(\cdot) \bigg)(x_1) \Bigg]^{2\over r},
\end{align*}
where $r<1$
and the Fefferman--Stein vector-valued maximal function inequality, we obtain that \eqref{Littlewood--Paley} holds.
Similarly, we can obtain that \eqref{Littlewood--Paley} holds for $Q^{(2)}_{t_2}$.
\end{proof}

Based on the proof of Theorem \ref{thm product Littlewood--Paley} above, we have the following estimates related to the Littlewood--Paley $g$-function.
\begin{prop}\label{cor product Littlewood--Paley}
Let $p\in \left({2\lz+1\over 2\lz+2}, 1\right]$, $Q^{(1)}_{t_1}$ and $Q^{(2)}_{t_2}$ be the same as above. Then for every $g\in H^p(\rlz)\cap L^2(\rlz)$, we have that
\begin{eqnarray}\label{product Littlewood--Paley p leq 1}
\bigg\| \left\{  \int_0^\fz\int_0^\fz \Big|Q^{(1)}_{t_1}Q^{(2)}_{t_2}(g)(\cdot,\cdot)\Big|^2 {dt_1\over t_1}{dt_2\over t_2} \right\}^{1\over2}\bigg\|_{L^p(\rlz)}\ls \|S_d(g)\|_{L^p(\rlz)}.
\end{eqnarray}
\end{prop}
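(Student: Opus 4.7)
The plan is to adapt the proof of Theorem~\ref{thm product Littlewood--Paley} to the range $p\in\big(\tfrac{2\lz+1}{2\lz+2},\,1\big]$, following exactly the same structural steps. Namely, I would start from the Calder\'on reproducing formula \eqref{reproducing} applied to $g$, insert $Q^{(1)}_{t_1}Q^{(2)}_{t_2}$ inside, and use the almost orthogonality estimate \eqref{eee2} to bound the continuous double-parameter square function pointwise by a sum in $(k_1,I_1,k_2,I_2)$ involving $|D^{(1)}_{k_1}D^{(2)}_{k_2}(g)(x_{I_1},x_{I_2})|$ multiplied by the usual decay factors in $|k_i-\wz k_i|$ together with the molecular factors controlling $|x_i-x_{I_i}|$. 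Following the same estimates as on pp.\,335--336 of \cite{HLL2}, this reduces the square of the left-hand side of \eqref{product Littlewood--Paley p leq 1} to an expression of the form $\big[\cm_1\cm_2(\cdots)^r\big]^{2/r}$ summed against summable geometric factors in $|k_i-\wz k_i|$.

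The only genuine modification relative to Theorem~\ref{thm product Littlewood--Paley} is the choice of the auxiliary parameter $r$. I would pick $r\in\big(\tfrac{2\lz+1}{2\lz+2},\,p\big)$, which is a nonempty open interval precisely because of the hypothesis $p>\tfrac{2\lz+1}{2\lz+2}$. The upper bound $r<p$ is what gives $p/r>1$, allowing the vector-valued Fefferman--Stein maximal inequality to be invoked in $L^{p/r}(\ell^{2/r})$. The lower bound $r>\tfrac{2\lz+1}{2\lz+2}$ translates to $(2\lz+1)(1/r-1)<1$, so a smoothness parameter $\epsilon\in(0,1)$ may be chosen with $\epsilon>(2\lz+1)(1/r-1)$; this ensures that
\[
\sum_{k_1\in\Z} 2^{-|k_1-\wz k_1|\epsilon}\,2^{[(k_1\wedge\wz k_1)-k_1](2\lz+1)(1-1/r)}
\]
and its $k_2$-analogue are summable uniformly in $\wz k_1,\wz k_2$, exactly as in the $p>1$ case.

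Once $r$ has been fixed, taking the square root and then the $L^p$-norm of the pointwise bound reduces matters to estimating $\big\|\big[\cm_1\cm_2(\cdots)\big]^{1/r}\big\|_{L^p(\rlz)}=\big\|\cm_1\cm_2(\cdots)\big\|_{L^{p/r}(\rlz)}^{1/r}$. Since $p/r>1$ and $2/r>1$, the vector-valued Fefferman--Stein inequality in $L^{p/r}(\ell^{2/r})$ strips off the iterated maximal operators, producing precisely $\|S_d(g)\|_{L^p(\rlz)}$ on the right-hand side and yielding \eqref{product Littlewood--Paley p leq 1}.

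The only genuinely new point compared to Theorem~\ref{thm product Littlewood--Paley} is verifying that the two constraints on $r$---$r<p$ for Fefferman--Stein and $r>\tfrac{2\lz+1}{2\lz+2}$ for the geometric summability---can be satisfied simultaneously; this is precisely why the hypothesis $p>\tfrac{2\lz+1}{2\lz+2}$ is the natural threshold. Every other step of the argument is a line-by-line adaptation of the $p>1$ case, so no new technical obstacle arises.
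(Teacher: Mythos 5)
Your proposal is correct and follows essentially the same route as the paper: the paper's proof also just reruns the argument of Theorem \ref{thm product Littlewood--Paley} with the auxiliary exponent $r$ chosen in $\left(\frac{2\lz+1}{2\lz+2},p\right)$, so that the geometric factors remain summable and the vector-valued Fefferman--Stein inequality applies in $L^{p/r}(\ell^{2/r})$, yielding \eqref{eeeee3} and hence \eqref{product Littlewood--Paley p leq 1}. Your explicit verification of the compatibility of the two constraints on $r$ is exactly the point implicit in the paper's appeal to the estimates of \cite[pp.\,335--336]{HLL2}.
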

\begin{proof}
Following the same proof of Theorem \ref{thm product Littlewood--Paley} above, and noting that
based on the estimates in the proof of Theorem 2.10 in \cite[pp.\, 335--336]{HLL2}, we have the following estimate:
\begin{align*}
 \mathbb L
 &\ls \sum_{k_1} \sum_{k_2}2^{-|k_1-\wz k_1|\epsilon}2^{-|k_2-\wz k_2|\epsilon} 2^{[(k_1\wedge \wz k_1)-k_1](2\lz+1)(1-{1\over r})}2^{[(k_2\wedge \wz k_2)-k_2](2\lz+1)(1-{1\over r})}\\
 &\quad\ \ \ \times
 \Bigg[ \mathcal{M}_1\bigg( \sum_{I_1 \in\mathscr{X}^{k_1+N_1}}  \mathcal{M}_2\Big( \sum_{I_2 \in\mathscr{X}^{k_2+N_2}}   \inf_{\gfz{y_1\in I_1}{y_2\in I_2}}\lf|D^{(1)}_{k_1}D^{(2)}_{k_2}(g)(y_1,y_2)\r|^r  \chi_{I_2}(\cdot)\Big)(x_2) \chi_{I_1}(\cdot) \bigg)(x_1) \Bigg]^{2\over r}
\end{align*}
for ${2\lz+1\over 2\lz+2}<r<p$, where $\mathbb L$ is defined as in \eqref{eee1}.  Thus, we obtain that \eqref{eeeee3} holds  for ${2\lz+1\over 2\lz+2}<p\leq 1$, which implies \eqref{product Littlewood--Paley p leq 1}.
\end{proof}

Next we provide the $L^p$ (for $1<p<\infty$) boundedness of the product Littlewood--Paley area functions associated with the operators $Q^{(1)}_{t_1}$ and $Q^{(2)}_{t_2}$.
\begin{thm}\label{thm product Littlewood--Paley area}
Let $Q^{(1)}_{t_1}$ and $Q^{(2)}_{t_2}$ be the same as above. Then for $1<p<\infty$ and  for every $g\in L^p(\rlz)$, we have that
\begin{align}\label{product Littlewood--Paley area}
\bigg\| \left\{  \iint_{\Gamma(x) } \Big|Q^{(1)}_{t_1}Q^{(2)}_{t_2}(g)(y_1,y_2)\Big|^2 {d\mu_\lambda(y_1,y_2) dt_1  dt_2 \over t_1 m_\lz(I(x_1, t_1)) t_2 m_\lz(I(x_2, t_2))} \right\}^{1\over2}\bigg\|_{L^p(\rlz)}\ls \|g\|_{L^p(\rlz)},
\end{align}
where $\Gamma(x):=\Gamma(x_1)\times \Gamma(x_2)$.
\end{thm}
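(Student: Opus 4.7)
The plan is to mirror the proof of Theorem \ref{thm product Littlewood--Paley}, adapting it to handle the extra integration over the product cone $\Gamma(x)$ via the doubling property of $m_\lz$. Dyadically splitting
$$\int_0^\fz\!\!\int_0^\fz \cdots\, dt_1\,dt_2=\sum_{\wz k_1,\wz k_2\in\Z}\int_{2^{-\wz k_1-1}}^{2^{-\wz k_1}}\!\!\int_{2^{-\wz k_2-1}}^{2^{-\wz k_2}}\cdots\, dt_1\,dt_2$$
and inserting Calder\'on's reproducing formula \eqref{reproducing} into $Q^{(1)}_{t_1}Q^{(2)}_{t_2}(g)(y_1,y_2)$, I obtain an expansion analogous to \eqref{eee1} but evaluated at $(y_1,y_2)$ rather than $(x_1,x_2)$. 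The almost orthogonality estimate \eqref{eee2} then applies pointwise in $(y_1,y_2)$.

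The key new step is the cone integration. For $(y_i,t_i)\in\Gamma_i(x_i)$ with $t_i\sim 2^{-\wz k_i}$, the doubling of $m_\lz$ gives $m_\lz(I(x_i,t_i))\sim m_\lz(I(y_i,t_i))$, so the normalized density $d\mu_\lz(y_1,y_2)/[m_\lz(I(x_1,t_1))m_\lz(I(x_2,t_2))]$ behaves as a probability average over the slab $I(x_1,t_1)\times I(x_2,t_2)$. Since $|y_i-x_i|<t_i\sim 2^{-\wz k_i}$, the geometric factor $(2^{-k_i}+2^{-\wz k_i})/(|y_i-x_{I_i}|+2^{-k_i}+2^{-\wz k_i})$ and the measure terms in the denominator of \eqref{eee2} are comparable to their values at $y_i=x_i$, uniformly for $y_i\in I(x_i,t_i)$. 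Hence cone averaging costs only a multiplicative constant per dyadic scale.

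After this reduction, the integrand is controlled term by term by the quantity arising at $y_i=x_i$ in the proof of Theorem \ref{thm product Littlewood--Paley}. Following the estimates of Theorem 2.10 in \cite{HLL2} with $r\in(0,1)$ chosen appropriately, taking square roots and the $L^p$ norm, summing the geometric series in $\wz k_i$, and invoking the Fefferman--Stein vector-valued maximal inequality as in \eqref{eeeee3} yields
$$\bigg\|\Bigl\{\iint_{\Gamma(x)}|Q^{(1)}_{t_1}Q^{(2)}_{t_2}(g)|^2\frac{d\mu_\lz(y_1,y_2)\,dt_1\,dt_2}{t_1m_\lz(I(x_1,t_1))t_2m_\lz(I(x_2,t_2))}\Bigr\}^{\frac12}\bigg\|_{L^p(\rlz)}\ls\|S_d(g)\|_{L^p(\rlz)}.$$
Invoking \eqref{discrete Littlewood--Paley} then gives $\|S_d(g)\|_{L^p(\rlz)}\ls\|g\|_{L^p(\rlz)}$ for $1<p<\fz$, and the theorem follows.

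The main technical obstacle is verifying that cone averaging does not destroy the geometric decay from almost orthogonality. This is where the doubling of $m_\lz$ is crucial: it allows one to interchange $x_i$ and $y_i$ at scale $t_i$ throughout the estimate \eqref{eee2} with only a constant factor loss, so that the problem reduces to the $g$-function style estimate already established in Theorem \ref{thm product Littlewood--Paley}.
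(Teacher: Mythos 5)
Your proposal is correct and follows essentially the same route as the paper: the paper's proof is a brief sketch that inserts Calder\'on's reproducing formula \eqref{reproducing} and the almost orthogonality estimates \eqref{eee2}, bounds the cone integral pointwise by the maximal-function expression appearing in the proof of Theorem \ref{thm product Littlewood--Paley}, and concludes via the Fefferman--Stein inequality and \eqref{discrete Littlewood--Paley}. Your added detail that the cone averaging only costs a constant, since $|y_i-x_i|<t_i\sim 2^{-\wz k_i}$ and the doubling of $m_\lz$ make the factors in \eqref{eee2} at $y_i$ comparable to those at $x_i$, is precisely the step the paper leaves implicit.
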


\begin{proof}
The proof of this theorem is similar to that of Theorem \ref{thm product Littlewood--Paley} and so we briefly sketch the proof.  From Calder\'on's reproducing formula \eqref{reproducing} and the almost orthogonality estimates \eqref{eee2}, we have
\begin{align*}
&  \iint_{\Gamma(x) } \Big|Q^{(1)}_{t_1}Q^{(2)}_{t_2}(g)(y_1,y_2)\Big|^2 {d\mu_\lambda(y_1,y_2) dt_1  dt_2 \over t_1 m_\lz(I(x_1, t_1)) t_2 m_\lz(I(x_2, t_2))} \\
& \,\,\,\, \ls  \sum_{k_1,\,k_2}\!
 \Bigg[ \mathcal{M}_1\bigg( \sum_{I_1 \in\mathscr{X}^{k_1+N_1}}  \mathcal{M}_2\Big( \sum_{I_2 \in\mathscr{X}^{k_2+N_2}}   \inf_{\gfz{y_1\in I_1}{y_2\in I_2}}|D^{(1)}_{k_1}D^{(2)}_{k_2}(g)(y_1,y_2)|^r  \chi_{I_2}(\cdot)\Big)(x_2) \chi_{I_1}(\cdot) \bigg)(x_1) \Bigg]^{2\over r}
\end{align*}
which implies that the left-hand side of \eqref{product Littlewood--Paley area} is bounded by $\big\| S_d(g) \big\|_{L^p(\rlz)}$, which together with \eqref{discrete Littlewood--Paley}, finishes the proof of Theorem \ref{thm product Littlewood--Paley area}.
\end{proof}


Note that from Proposition \ref{p-aoti} and Lemma \ref{l-seimgroup prop} ${\rm (S_{iii})}$, we
see that the kernels of $S(f)$ and $S_u(f)$ satisfy the conditions ${\rm (K_i)}, {\rm (K_{ii})}$ and ${\rm (K_{iii})}$ listed above.  As a direct consequence of Theorem \ref{thm product Littlewood--Paley area}, we have
\begin{thm}\label{thm Littlewood--Paley area functions}
The product Littlewood--Paley area functions $S(f)$ and $S_u(f)$ are bounded operators on
$L^p(\rlz)$, $1<p<\infty$.
\end{thm}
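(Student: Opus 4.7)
The plan is to reduce both statements directly to Theorem~\ref{thm product Littlewood--Paley area}. Concretely, for each of the two area functions I will identify operators $Q^{(1)}_{t_1}$ and $Q^{(2)}_{t_2}$ whose integral kernels satisfy the size condition $(K_i)$, the smoothness condition $(K_{ii})$, and the cancellation condition $(K_{iii})$; once this is done, the $L^p$ bound for $1<p<\infty$ is immediate. The essential input is Proposition~\ref{p-aoti}, which asserts that the Bessel Poisson kernel together with its first-order $t$- and $y$-derivatives all lie in the test function class $\GGone(\beta,\gamma)$ for every $\beta,\gamma\in(0,1]$; this class packages precisely the Calder\'on--Zygmund-type estimates we need, together with vanishing integral.

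For $S(f)$, set $Q^{(i)}_{t_i}:=t_i\partial_{t_i}e^{-t_i\sqrt{\Delta_\lambda}}$, so that
$$|Sf(x)|^2=\iint_{\Gamma(x)}\big|Q^{(1)}_{t_1}Q^{(2)}_{t_2}f(y_1,y_2)\big|^2\,\frac{d\mu_\lambda(y_1,y_2)\,dt_1\,dt_2}{t_1\,m_\lambda(I(x_1,t_1))\,t_2\,m_\lambda(I(x_2,t_2))},$$
which is exactly the quantity bounded by Theorem~\ref{thm product Littlewood--Paley area}. The kernel is $K^{(i)}_{t_i}(x_i,y_i)=t_i\partial_{t_i}P^{[\lambda]}_{t_i}(x_i,y_i)$. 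By Proposition~\ref{p-aoti}, for each fixed $t_i,x_i$ this function of $y_i$ lies in the test function class at scale $t_i$ centered at $x_i$ with smoothness and decay exponents $\beta=\gamma=1$, which gives $(K_i)$ and $(K_{ii})$. For $(K_{iii})$ one may either invoke that elements of $\GGone$ have vanishing integral, or pass the derivative through: $\int_0^\infty t\partial_tP^{[\lambda]}_t(x,y)\,dm_\lambda(y)=t\partial_t(e^{-t\sqrt{\Delta_\lambda}}1)(x)=t\partial_t(1)=0$ by Lemma~\ref{l-seimgroup prop}$(S_{iii})$. Theorem~\ref{thm product Littlewood--Paley area} then yields $\|Sf\|_{L^p(\rlz)}\lesssim\|f\|_{L^p(\rlz)}$.

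For $S_u(f)$, expand $\nabla_{t_i,y_i}=(\partial_{t_i},\partial_{y_i})$ and use the trivial bound $|\nabla_{t_i,y_i}e^{-t_i\sqrt{\Delta_\lambda}}f|\le|\partial_{t_i}e^{-t_i\sqrt{\Delta_\lambda}}f|+|\partial_{y_i}e^{-t_i\sqrt{\Delta_\lambda}}f|$ to dominate $S_u(f)$ by a finite sum of four area functions of the same form treated above, with $Q^{(i)}_{t_i}\in\{t_i\partial_{t_i}e^{-t_i\sqrt{\Delta_\lambda}},\,t_i\partial_{y_i}e^{-t_i\sqrt{\Delta_\lambda}}\}$ in each variable. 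Proposition~\ref{p-aoti} again places both kernel types in the required test function class, so $(K_i)$ and $(K_{ii})$ hold in each of the four cases; for $(K_{iii})$, the $\partial_t$ cases follow as before, and the $\partial_y$ cases follow directly from the vanishing-mean property built into membership in $\GGone$. Summing the four resulting $L^p$ bounds completes the proof.

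There is no real obstacle beyond this bookkeeping: the hard analytic work---the discrete Littlewood--Paley decomposition, the almost-orthogonality estimates, and the Fefferman--Stein vector-valued maximal inequality---has already been carried out inside the proof of Theorem~\ref{thm product Littlewood--Paley area}, while Proposition~\ref{p-aoti} reduces the verification of the kernel hypotheses to a matter of identifying scales and invoking cancellation.
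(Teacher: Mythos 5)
Your proposal is correct and follows essentially the same route as the paper: the paper's proof of this theorem is precisely the observation that, by Proposition~\ref{p-aoti} and Lemma~\ref{l-seimgroup prop}~${\rm (S_{iii})}$, the kernels underlying $S(f)$ and $S_u(f)$ satisfy ${\rm (K_i)}$--${\rm (K_{iii})}$, so Theorem~\ref{thm product Littlewood--Paley area} applies directly. Your write-up simply makes explicit the choice of $Q^{(i)}_{t_i}$ and the componentwise domination of $S_u$, which the paper leaves implicit.
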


\section{Proof of Theorem \ref{thm H1 and classical H1}}
\label{sec:first theorem}


In this section, we prove Theorem \ref{thm H1 and classical H1}.
%
%
%
The main approach here is to  show that
$H^p_{\Delta_\lambda}(\rlz)$ has a particular atomic decomposition as in Theorem \ref{theorem-Hp atom decomp}

To begin with, we recall the following construction of $\psi$ in \cite{dy03}.  Let $\varphi:=-\pi i\chi_{\frac12<|x|<1}$ and
$\psi$ the Fourier transform of $\varphi$. That is,
$$\psi(s):=s^{-1}(2\sin (s/2)-\sin s).$$

Consider the operator
\begin{align}\label{eee psi}
\psi\big(t\sqrt{\Delta_\lz}\big):=
\big(t\sqrt{\Delta_\lz}\big)^{-1}\lf[2\sin\big(t\sqrt{\Delta_\lz}/2\big)-\sin\big(t\sqrt{\Delta_\lz}\big)\r].
\end{align}

\begin{prop}[\cite{dy03}]\label{p-kernel cancel-compact supp}
For all $t\in(0, \fz)$, $\psi\lf(t\sqrt{\Delta_\lz}\r)(1)=0$
and the kernel $K_{\psi(t\sqrt{\Delta_\lz})}$ of $\psi(t\sqrt{\Delta_\lz})$ has support contained in
$\{(x, y)\in\mathbb R_+\times \mathbb R_+:\, |x-y|\le t\}.$

\end{prop}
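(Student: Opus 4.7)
My plan is to exploit functional calculus for $\sqrt{\Delta_\lz}$ combined with finite propagation speed for the associated wave equation $\partial_t^2 u + \Delta_\lz u = 0$. The starting point is to realize $\psi(s) = s^{-1}(2\sin(s/2) - \sin s)$ as a cosine transform of a compactly supported function. Using the identity $\sin x / x = \int_0^1 \cos(sx)\,ds$ together with evenness, one checks that
\begin{equation*}
\psi(s) = \int_{-1/2}^{1/2}\cos(us)\,du - \frac12\int_{-1}^{1}\cos(us)\,du = \int_{-1}^{1} g(u)\cos(us)\,du,
\end{equation*}
where $g(u):=\chi_{\{|u|<1/2\}}(u) - \frac12\chi_{\{|u|<1\}}(u)$ is bounded and supported in $[-1,1]$. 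By the spectral theorem applied to the non-negative self-adjoint operator $\Delta_\lz$, this identity lifts at the operator level to
\begin{equation*}
\psi\bigl(t\sqrt{\Delta_\lz}\bigr) = \int_{-1}^{1} g(u)\,\cos\bigl(ut\sqrt{\Delta_\lz}\bigr)\,du,
\end{equation*}
interpreted as a Bochner integral of bounded operators on $L^2(\R_+,\dmz)$.

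Given this representation, the support statement reduces to finite propagation speed. The key fact I would invoke is that the kernel of $\cos(\tau\sqrt{\Delta_\lz})$ is supported in $\{(x,y)\in\R_+\times\R_+:\,|x-y|\le\tau\}$ for every $\tau\ge 0$. Because $g$ forces $|u|\le 1$, the kernel of each $\cos(ut\sqrt{\Delta_\lz})$ appearing in the integral is supported in $\{|x-y|\le |u|t\}\subset\{|x-y|\le t\}$, and this property is preserved after integration against $g(u)\,du$, yielding the desired containment for $K_{\psi(t\sqrt{\Delta_\lz})}$.

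For the cancellation claim $\psi(t\sqrt{\Delta_\lz})(1)=0$, I would use two observations. First, since $\Delta_\lz(1)=0$, the power-series expansion of $\cos$ gives $\cos(ut\sqrt{\Delta_\lz})(1)\equiv 1$ pointwise for every $u,t$. Second, a direct integration (equivalently, the identity $\int g(u)\,du = \psi(0)$, valid since $\psi$ extends continuously to $0$ with value $0$) shows $\int_{-1}^{1} g(u)\,du = 1 - \frac12\cdot 2 = 0$. Pairing both sides of the operator identity with the constant function $1$ therefore yields
\begin{equation*}
\psi\bigl(t\sqrt{\Delta_\lz}\bigr)(1)(x) = \int_{-1}^{1} g(u)\,du = 0,
\end{equation*}
equivalently $\int K_{\psi(t\sqrt{\Delta_\lz})}(x,y)\,\dmz(y) = 0$ for all $x\in\R_+$.

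The step I expect to be the main obstacle is rigorously justifying finite propagation speed for $\cos(t\sqrt{\Delta_\lz})$ in this weighted setting. Since $\Delta_\lz = -\partial_x^2 - (2\lz/x)\partial_x$ has principal symbol $\xi^2$, the standard energy method for second-order symmetric hyperbolic equations should apply: if $u$ solves $\partial_t^2 u + \Delta_\lz u = 0$ with initial data supported in a set $E$, then differentiating a local energy of the form $\int(|\partial_t u|^2 + |\partial_x u|^2)\,\dmz$ over a truncated backward cone and integrating by parts should show that $u(t,\cdot)$ is supported in the $t$-neighborhood of $E$. The care required is with the singular drift $2\lz/x$ near $x=0$, but the Muckenhoupt--Stein framework recalled earlier provides the analytic setup to control the relevant boundary terms. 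Once finite propagation speed is in hand, the proposition follows from the three paragraphs above.
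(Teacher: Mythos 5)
The paper does not prove this proposition itself---it simply cites \cite{dy03}---and your argument is precisely the standard route taken there: write $\psi(s)=\int_{-1}^{1}g(u)\cos(us)\,du$ with $g$ supported in $[-1,1]$ and $\int g=\psi(0)=0$, lift this by the spectral theorem, and invoke finite propagation speed for $\cos\bigl(\tau\sqrt{\Delta_\lz}\bigr)$, which is indeed available in the Bessel setting since the heat kernel of $\Delta_\lz$ satisfies Gaussian upper bounds (equivalently, Davies--Gaffney estimates, which by Coulhon--Sikora give finite speed, so you need not redo the energy estimate near $x=0$). Your proof is correct.
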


%

We now turn to the proof of our first main result, Theorem \ref{thm H1 and classical H1}.
\begin{proof}[\bf Proof of Theorem \ref{thm H1 and classical H1}]
Note that from the  definition of $H^p_{{\Delta_\lz}}( \rlz)$, we have $L^2(\rlz)\cap H^p_{{\Delta_\lz}}( \rlz)$ is dense in $H^p_{{\Delta_\lz}}( \rlz)$. Moreover, from  Proposition 2.19 in \cite{HLL2}, we have that
 $L^2(\rlz)\cap H^p(\rlz)$ is dense in $H^p(\rlz)$.
Thus, by a density argument, it suffices to show that
\begin{equation}\label{equiv: atomic H1}
L^2(\rlz)\cap H^p_{{\Delta_\lz}}( \rlz)=L^2(\rlz)\cap H^p(\rlz)
\end{equation}
with equivalent norms.

We first prove that for every $f\in L^2(\rlz)\cap H^p_{{\Delta_\lz}}( \rlz)$,
$f$ belongs to $H^p(\rlz)$ and
\begin{equation}\label{equiv: atomic H1 e1}
\|f\|_{H^p(\rlz)}\ls \|f\|_{H^p_{{\Delta_\lz}}( \rlz)}.
\end{equation}
To prove this argument, it suffices to show that for every $f\in L^2(\rlz)\cap H^p_{{\Delta_\lz}}( \rlz)$, $f$ has an atomic decomposition, with atoms satisfying the properties as in Definition \ref{def-of-p q atom}.

To see this, we adapt the proof of the atomic decomposition as in Proposition 3.4 in \cite{CDLWY} to our current setting of Bessel operators. We point out that
in \cite{CDLWY} they considered only the atomic decompositions for Hardy space $H^1_{L_1,\,L_2}(X_1\times X_2)$ associated with operators $L_1$ and $L_2$, but their methods also work for $p<1$.

Let $f\in L^2(\rlz)\cap H^p_{{\Delta_\lz}}( \rlz)$. For each
$j\in\zz$, define
$$\Omega_j:=\lf\{(x_1, x_2)\in \rlz:\, Sf(x_1,x_2)>2^j\r\},$$
\begin{equation*}
\begin{array}[b]{cl}
B_j&:=\lf\{R:=I^{k_1}_{\alpha_1}\times I^{k_2}_{\alpha_2}:\,
\mu_\lz(R\cap \Omega_j)>\mu_\lz(R)/2,\,\mu_\lz((R\cap \Omega_{j+1})\le \mu_\lz(R)/2\r\},
\end{array}
\end{equation*}
and
$$\widetilde{\Omega}_j:=\lf\{(x_1, x_2)\in \rlz:\, \cms(\chi_{\Omega-j})(x_1,x_2)>1/2\r\}.$$
Here we use $I^{k_i}_{\alpha_i}$ ($i=1,2$) to denote the dyadic intervals as stated in Section 2 and
$\cms$ is the maximal function defined by
\begin{equation}\label{stro max func-defn}
{\cms}(f)(x_1, x_2):=\dsup_{\gfz{I\ni x_1,\, J\ni x_2}{R:=I\times J}}\dfrac1{\mu_\lz(R)}\iint_R|f(y_1,y_2)|\,d\mu_\lz(y_1,y_2),
\end{equation}
where the supremum is taken over all rectangles $R:=I\times J$ with intervals $I,\,J\subset\R_+$.
For each dyadic rectangle $R:= I^{k_1}_{\alpha_1}\times I^{k_2}_{\alpha_2}$ in $\rlz$, the tent $T(R)$ is defined as
$$T(R):= \lf\{(y_1, y_2, t_1, t_2): (y_1, y_2)\in R,\, t_1\in\lf(2^{-k_1}, 2^{-k_1+1}\r], t_2\in\lf(2^{-k_2}, 2^{-k_2+1}\r]\r\}.$$

We now consider the following reproducing formula
\begin{eqnarray*}
f(x_1, x_2)&&=C_\psi\dint_0^\infty\dint_0^\infty\psi(t_1\sqrt{\Delta_\lz})\psi(t_2\sqrt{\Delta_\lz})
\big( t_1\sqrt{\Delta_\lz}e^{-t_1\sqrt{\Delta_\lz}} \otimes t_2\sqrt{\Delta_\lz}e^{-t_2\sqrt{\Delta_\lz}}\big) f(x_1, x_2)\,\frac{dt_1}{t_1}\frac{dt_2}{t_2},
\end{eqnarray*}
in the sense of $L^2(\rlz)$, where $\psi(t_1\sqrt{\Delta_\lz})$ and $\psi(t_2\sqrt{\Delta_\lz})$ are defined as in \eqref{eee psi} and $C_\psi$ is a constant depending on $\psi$ (see (3.13) in \cite{CDLWY}). Then we have
\begin{eqnarray}\label{eeee1}
f(x_1, x_2)&&=C_\psi\sum_{j\in\zz}\sum_{R\in B_j}\iiiint_{T(R)}\psi(t_1\sqrt{\Delta_\lz})(x_1,y_1)\psi(t_2\sqrt{\Delta_\lz})(x_2,y_2)\\
 &&\quad\quad \big(t_1\sqrt{\Delta_\lz}e^{-t_1\sqrt{\Delta_\lz}} \otimes t_2\sqrt{\Delta_\lz}e^{-t_2\sqrt{\Delta_\lz}}\big)(f)(y_1, y_2)\,d\mu_\lz(y_1,y_2)\frac{dt_1}{t_1}\frac{dt_2}{t_2}\nonumber\\
&&=:\sum_{j\in\zz}\az_ja_j(x_1, x_2)\nonumber
\end{eqnarray}
with
\begin{eqnarray*}
\az_j&&:=C_\psi\bigg\|\bigg(\sum_{R\in B_j}\dint_0^\infty\dint_0^\infty
\lf|\big(t_1\sqrt{\Delta_\lz}e^{-t_1\sqrt{\Delta_\lz}} \otimes t_2\sqrt{\Delta_\lz}
e^{-t_2\sqrt{\Delta_\lz}}\big)f(\cdot, \cdot)\r|^2\chi_{T(R)}\frac{dt_1}{t_1}\frac{dt_2}{t_2}\bigg)^{\frac12}\bigg\|_{L^2(\rlz)}\\
&&\quad\quad\times\mu_\lz(\widetilde \Omega_j)^{1/p-1/2},
\end{eqnarray*}
and
$$a_j(x_1, x_2)=\sum_{\bar R\in m(\widetilde \Omega_j)}a_{j,\,\bar R}(x_1, x_2),$$
where
\begin{eqnarray*}
a_{j,\,\bar R}&&:=\sum_{R\in B_j,\,R\subset \bar R}\frac1{\az_j}\iiiint_{T(R)}\psi(t_1\sqrt{\Delta_\lz})(x_1,y_1)\psi(t_2\sqrt{\Delta_\lz})(x_2,y_2)
 \\
 &&\quad\quad \big(t_1\sqrt{\Delta_\lz}e^{-t_1\sqrt{\Delta_\lz}} \otimes t_2\sqrt{\Delta_\lz}e^{-t_2\sqrt{\Delta_\lz}}\big)(f)(y_1, y_2)\,d\mu_\lz(y_1,y_2)\frac{dt_1}{t_1}\frac{dt_2}{t_2}.
\end{eqnarray*}

Following the proof of Proposition 3.4 in \cite{CDLWY}, we deduce that
\begin{align}\label{eeeee2}
\sum_j|\alpha_j|^p \leq C\|f\|_{H^p_{{\Delta_\lz}}( \rlz)}^p
\end{align}
and that for each $j$, $a_j$ is supported in $\widetilde \Omega_j$, and that $\|a_j\|_{L^2(\rlz)}\leq \mu_\lz(\widetilde \Omega_j)^{1/2-1/p},$
which implies that $a_j$ satisfies (1) and (2)
in Definition \ref{def-of-p q atom}.
%
%
%
Moreover,  each $a_{j,\,\bar R}(x_1, x_2)$ is supported in $CR$, where $C$ is a fixed positive constant, and
$$ \sum_{\bar R\in m(\widetilde \Omega_j)} \|a_{j,\,\bar{R}}\|^2_{L^2(\rlz)}\leq \mu_\lz(\widetilde \Omega_j)^{1-2/p}.$$
By Proposition \ref{p-kernel cancel-compact supp}, we also see that
$$\dint a_{j,\,\bar R}(x_1, x_2)\,dm_\lz(x_1)=\dint a_{j,\,\bar R}(x_1, x_2)\,dm_\lz(x_2)=0.$$
This shows that $a_j$ satisfies (3) in Definition \ref{def-of-p q atom}.  Combining these results, we get that for each $j$, $a_j$ is an atom as in Definition \ref{def-of-p q atom}. Hence, applying the result of Theorem \ref{theorem-Hp atom decomp} and \eqref{eeee1} and \eqref{eeeee2}, we obtain that \eqref{equiv: atomic H1 e1} holds.

Next we prove that for every $f\in L^2(\rlz)\cap H^p(\rlz) $,
$f$ belongs to $H^p_{{\Delta_\lz}}( \rlz)$ and
\begin{equation}\label{equiv: atomic H1 e2}
\|f\|_{H^p_{{\Delta_\lz}}( \rlz)}\ls\|f\|_{H^p(\rlz)} .
\end{equation}
To see this, note that for every $f\in L^2(\rlz)\cap H^p(\rlz) $, from Theorem \ref{theorem-Hp atom decomp}, we obtain that
$ f=\sum_k \lambda_k  a_k,$
where each $a_k$ is an atom in Definition \ref{def-of-p q atom} and $\sum_k|\lambda_k|^p\ls\|f\|_{H^p(\rlz)}^p$.

As pointed out in Section 2.3, $S(f)$ is bounded on $L^2(\rlz)$ (Theorem \ref{thm Littlewood--Paley area functions}), and the kernels of $S(f)$ satisfy the conditions ${\rm (K_i)}, {\rm (K_{ii})}$ and ${\rm (K_{iii})}$. Then, we have
\begin{align*}
\|S(a_k)\|_{L^p(\rlz)}\ls 1,
\end{align*}
where  the implicit constant is independent of $a_k$.
As a consequence, we have
\begin{align*}
\|S(f)\|_{L^p(\rlz)}^p\leq \sum_k|\lambda_k|^p\|S(a_k)\|_{L^p(\rlz)}^p \ls \|f\|_{H^p(\rlz)}^{p},
\end{align*}
which implies that \eqref{equiv: atomic H1 e2} holds.

Combining the results of \eqref{equiv: atomic H1 e1} and \eqref{equiv: atomic H1 e2}, we get that \eqref{equiv: atomic H1} holds, with equivalent norms. This completes the proof of Theorem \ref{thm H1 and classical H1}.
\end{proof}

%
%


Based on the proof of Theorem \ref{thm H1 and classical H1} above, we now prove that the three versions of Hardy spaces,
i.e.,  $H^p_{\Delta_\lz,\,1}( \rlz )$, $H^p_{\Delta_\lz,\,2}( \rlz )$ and $H^p_{\Delta_\lz,\,3}( \rlz )$, as define in Remark \ref{remark}, coincide with $H^{p}(\rlz)$.
\begin{prop}\label{prop eee}
For $p\in \left({2\lz+1\over2\lz+2},1\right]$, the Hardy spaces $H^p_{\Delta_\lz,\,1}( \rlz )$, $H^p_{\Delta_\lz,\,2}( \rlz )$ and $H^p_{\Delta_\lz,\,3}( \rlz )$ coincide with $H^{p}(\rlz)$ and they have equivalent norms (or quasi-norms).
\end{prop}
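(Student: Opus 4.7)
The plan is to mimic, line for line, the argument used for Theorem~\ref{thm H1 and classical H1}, replacing the Littlewood--Paley area function built from the Poisson semigroup by the three remaining objects (area function via heat semigroup; $g$-function via Poisson semigroup; $g$-function via heat semigroup). By the same density argument invoked in the proof of Theorem~\ref{thm H1 and classical H1}, it suffices to establish, for each $i\in\{1,2,3\}$, the identification
\[
L^2(\rlz)\cap H^p_{\Delta_\lz,\,i}(\rlz)=L^2(\rlz)\cap H^p(\rlz)
\]
with equivalent quasi-norms.

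First I would handle the inclusion $L^2\cap H^p_{\Delta_\lz,\,i}\subset L^2\cap H^p$ by producing, for any $f$ in the left-hand side, an atomic decomposition as in Definition~\ref{def-of-p q atom}. For each $i$ one writes down a Calder\'on-type reproducing formula built from $\psi(t\sqrt{\Delta_\lz})$ of Proposition~\ref{p-kernel cancel-compact supp} together with the operators appearing in the definition of the relevant area or square function. For instance, when the area function uses the heat semigroup one uses
\[
f=C_\psi\int_0^\fz\!\!\int_0^\fz \psi(t_1\sqrt{\Delta_\lz})\psi(t_2\sqrt{\Delta_\lz})\bigl(t_1\Delta_\lz e^{-t_1\Delta_\lz}\otimes t_2\Delta_\lz e^{-t_2\Delta_\lz}\bigr)f\,\tfrac{dt_1}{t_1}\tfrac{dt_2}{t_2},
\]
and in the $g$-function cases one keeps the inner operator exactly as in the defining formula of $g(f)$. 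Because $\psi(t\sqrt{\Delta_\lz})$ has compactly supported kernel and annihilates constants, the level-set/tent decomposition used between \eqref{eeee1} and \eqref{eeeee2} in the proof of Theorem~\ref{thm H1 and classical H1} then yields atoms satisfying the support, size and double-cancellation conditions (1)--(3) of Definition~\ref{def-of-p q atom}, and the coefficient estimate $\sum_j|\alpha_j|^p\ls\|f\|_{H^p_{\Delta_\lz,\,i}}^p$ coming directly from the definition of the relevant area or square function. Invoking Theorem~\ref{theorem-Hp atom decomp} then gives $\|f\|_{H^p(\rlz)}\ls\|f\|_{H^p_{\Delta_\lz,\,i}(\rlz)}$.

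For the reverse inclusion $L^2\cap H^p\subset L^2\cap H^p_{\Delta_\lz,\,i}$, I would decompose $f\in L^2\cap H^p(\rlz)$ atomically via Theorem~\ref{theorem-Hp atom decomp} and show $\|A^{(i)}(a)\|_{L^p(\rlz)}\ls 1$ uniformly in the atom $a$, where $A^{(i)}$ denotes the area/square function corresponding to $H^p_{\Delta_\lz,\,i}$. This is where the generality of the Littlewood--Paley estimates of Section~2.3 pays off: one verifies from Proposition~\ref{p-aoti}, Lemma~\ref{l-seimgroup prop} and standard Bessel heat-kernel bounds that the kernels of $t_i\partial_{t_i}e^{-t_i\Delta_\lz}$, $t_i\partial_{t_i}e^{-t_i\sqrt{\Delta_\lz}}$, and their gradient analogues satisfy the structural conditions $(\mathrm{K_i})$, $(\mathrm{K_{ii}})$, $(\mathrm{K_{iii}})$. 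Hence Theorem~\ref{thm Littlewood--Paley area functions} and Proposition~\ref{cor product Littlewood--Paley} apply, and the standard argument (exactly as at the end of the proof of Theorem~\ref{thm H1 and classical H1}) gives the uniform estimate $\|A^{(i)}(a)\|_{L^p(\rlz)}\ls 1$ and therefore $\|f\|_{H^p_{\Delta_\lz,\,i}(\rlz)}\ls\|f\|_{H^p(\rlz)}$.

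The step I expect to require the most care is the verification that the inner operators appearing in $H^p_{\Delta_\lz,\,1}$, $H^p_{\Delta_\lz,\,2}$, $H^p_{\Delta_\lz,\,3}$ genuinely satisfy the conditions $(\mathrm{K_i})$--$(\mathrm{K_{iii}})$ uniformly in $t$; once this is in place, the reproducing formula, the tent/atom construction, and the $L^p$ boundedness machinery are all applied verbatim from the Poisson--area-function case, so no further structural obstacle arises.
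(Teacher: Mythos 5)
Your treatment of $H^p_{\Delta_\lz,\,1}(\rlz)$ (area function via the heat semigroup) is essentially the paper's own argument: the heat kernel satisfies $(\mathrm{K_i})$--$(\mathrm{K_{iii}})$ and the proof of Theorem \ref{thm H1 and classical H1} is repeated. One repair is needed even there: pairing $\psi(t_i\sqrt{\Delta_\lz})$ with $t_i\Delta_\lz e^{-t_i\Delta_\lz}$ does not give a Calder\'on reproducing formula, because the spectral integral $\int_0^\fz\psi(t\sqrt{s})\,ts\,e^{-ts}\,\frac{dt}{t}$ is not constant in $s$; you must pair functions of the same variable, e.g. use $t_i^2\Delta_\lz e^{-t_i^2\Delta_\lz}=\eta\big(t_i\sqrt{\Delta_\lz}\big)$ with $\eta(u)=u^2e^{-u^2}$, and then a change of variable in $t$ relates the resulting square function to the one defining $H^p_{\Delta_\lz,\,1}$. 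This is a fixable slip, not a structural problem.

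The genuine gap is in the $g$-function cases $i=2,3$. Your atomic decomposition is the tent-space stopping-time construction: $\Omega_j:=\{A^{(i)}f>2^j\}$, the families $B_j$ chosen by relative measure, and the coefficient bound $\sum_j|\alpha_j|^p\ls\|f\|_{H^p_{\Delta_\lz,\,i}}^p$ rests on the estimate $\sum_{R\in B_j}\iiiint_{T(R)}|Q_{t_1}Q_{t_2}f|^2\,d\mu_\lz\,\frac{dt_1dt_2}{t_1t_2}\ls 2^{2j}\mu_\lz(\widetilde\Omega_j)$. That estimate is proved by noting that every point of the tent $T(R)$ lies in the cone $\Gamma(x)$ (of comparable aperture) of every $x$ in the substantial part of $R$ lying outside $\Omega_{j+1}$, and then applying Fubini -- i.e. it intrinsically uses a conical (area) function on the level sets. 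If $\Omega_j$ is instead defined through the vertical square function $g(f)$, there is no such comparison: $g(f)(x)$ only sees the values of $t_1\pa_{t_1}T_{t_1}t_2\pa_{t_2}T_{t_2}f$ on the vertical ray over $x$, and the tent integral over $T(R)$ cannot be dominated by $\inf_{x\in R\setminus\Omega_{j+1}}g(f)(x)^2\,\mu_\lz(R)$. So the coefficient estimate does not ``come directly from the definition''; what you would actually need first is $\|Sf\|_{L^p}\ls\|g(f)\|_{L^p}$, which is precisely the nontrivial content in this direction. The paper avoids this entirely for $i=2,3$: it never builds atoms from the $g$-function, but instead proves $\|g(f)\|_{L^p(\rlz)}\ls\|S_d(f)\|_{L^p(\rlz)}$ via Proposition \ref{cor product Littlewood--Paley}, and gets the converse $\|S_d(f)\|_{L^p(\rlz)}\ls\|g(f)\|_{L^p(\rlz)}$ by rerunning the same discrete reproducing formula/almost-orthogonality (Plancherel--P\'olya) argument with the roles of the continuous operators $Q_t$ and the discrete blocks $D_k$ interchanged. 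As written, your forward inclusion $L^2(\rlz)\cap H^p_{\Delta_\lz,\,i}(\rlz)\subset L^2(\rlz)\cap H^p(\rlz)$ for $i=2,3$ is therefore unproved; either insert such a Plancherel--P\'olya step (or a $g$-to-$S$ comparison), or switch to the paper's route for these two cases.
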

\begin{proof}
We first consider $H^p_{\Delta_\lz,\,1}( \rlz)$ defined by the Littlewood--Paley area function via the heat semigroup $\{e^{-t\tbz}\}_{t>0}$. Since the kernel of $\{e^{-t\tbz}\}_{t>0}$ satisfies conditions ${\rm (K_i)}, {\rm (K_{ii})}$ and ${\rm (K_{iii})}$,
following the proof of Theorem \ref{thm H1 and classical H1}, we obtain that  $H^p_{\Delta_\lz,\,1}( \rlz )$ coincides with $H^{p}(\rlz)$ and they have equivalent norms (or quasi-norms).

Next we consider  $H^p_{\Delta_\lz,\,2}( \rlz)$ defined by the Littlewood--Paley $g$-function via the Poisson semigroup $\{e^{-t\sqrt{\tbz}}\}_{t>0}$. Suppose $f\in H^p(\rlz)$. Then from Proposition \ref{cor product Littlewood--Paley}, we obtain that $\|g(f)\|_{L^p(\rlz)}\ls\|S_d(f)\|_{L^p(\rlz)}$, which shows that
$H^p_{\Delta_\lz,2}( \rlz)\supseteq H^p(\rlz)$. Conversely, suppose $f\in H^p_{\Delta_\lz,\,2}( \rlz)$, following the proof of Proposition \ref{cor product Littlewood--Paley}, we obtain that $\|S_d(f)\|_{L^p(\rlz)} \ls\|g(f)\|_{L^p(\rlz)}$ and hence $H^p_{\Delta_\lz,\,2}( \rlz)\subseteq H^p(\rlz)$. As a consequence, we get that  $H^p_{\Delta_\lz,\,2}( \rlz)$ coincides with $H^p(\rlz)$ and they have equivalent norms. Similar argument holds for $H^p_{\Delta_\lz,\,3}( \rlz)$.
\end{proof}

\section{Proof of Theorem \ref{thm: char of Hardy spacs by max func}}
\label{sec:second thm}

This section is devoted to the proof of Theorem \ref{thm: char of Hardy spacs by max func}.
To this end, we will prove the chain of six inequalities as in \eqref{loop} by the following six steps, respectively.


\medskip
{\bf Step 1:\ $\|f\|_{H^p_{\Delta_\lz}( \rlz)} \leq
\|f\|_{H^p_{S_u}( \rlz)}$} for $f\in H^p_{S_u}( \rlz)\cap L^2(\rlz)$.



Note that from the definitions of the area functions $Sf$ and $S_uf$ in \eqref{esf} and \eqref{esfu} respectively, we have
for $f\in L^2(\rlz)$, $S(f)(x) \leq S_u(f)(x)$, which implies that $\|f\|_{H^p_{\Delta_\lz}( \rlz)} \leq
\|f\|_{H^p_{S_u}( \rlz)}$.
%
%

\bigskip

{\bf Step 2:\ $\|f\|_{H^p_{S_u}( \rlz)} \ls
\|f\|_{H^p_{\cn_{P}}(\rlz)}$} for $f\in H^p_{\cn_{P}}( \rlz)\cap L^2(\rlz)$.

\medskip
Recall again
\begin{align*}
\gratx u(t,x):=(\prz_t u, \prz_x u),\, \,\deltx u(t, x):=\dlz+\pa_t^2=-\mathcal D^\ast \mathcal D+\pa_t^2,
\end{align*}
where $\mathcal D^\ast:=-\pa_x - {2\lz\over x}\pa_x$ is the formal adjoint operator of $\mathcal D:=\pa_x$ in $\ltz$.


Next we introduce the following lemma about finding the ``conjugate pair'' of functions $(\phi,\psi)$, which plays a key role in this step.
\begin{lem}\label{lem: constru of func psi}
Let $\phi\in C^\fz_c(\R_+)$ such that $\phi\ge 0$, $\supp(\phi)\subset (0, 1)$ and
\begin{equation*}
\inzf \phi(x)\,\dmz(x)=1.
\end{equation*}
Then there exists a function $\psi(t, x, y)$ on $\R_+\times\R_+\times\R_+$, such that
\begin{itemize}
\item[(i)]for any function $f\in\ltz$ and $t,\,x\in \R_+$,
\begin{equation}\label{CR equa:psi}
\prz_t (\phi_t\slz f)(x)= -\mathcal D^*[\psi(f)(t,x)]= \prz_x[\psi(f)(t,x)]+\frac{2\lz}x\psi(f)(t,x),
\end{equation}
where
$$\psi(f)(t,x):=\inzf \psi(t, x, y)f(y)\dmzy;$$

  \item [(ii)] for any $t,\, x,\, y$ with $|x-y|\ge t,$
  \begin{equation}\label{comp supp psi}
\psi(t, x, y)\equiv0;
\end{equation}
  \item [(iii)] $\psi(t, x, y)$ satisfies the conditions ${\rm (K_i)}$, ${\rm (K_{ii})}$ and ${\rm (K_{iii})}$ as in Section 2.3.
%
\end{itemize}
\end{lem}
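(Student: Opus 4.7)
The plan is to motivate an explicit formula for $\psi$ by reducing condition~(i) to a first order ODE in the $x$ variable at the kernel level, and then to read off (ii) and (iii) from the resulting integral representation. Writing $K(t,s,y):=\tau^{[\lz]}_s\phi_t(y)$, so that $\phi_t\sharp_\lz f(x)=\int_0^\fz K(t,x,y)f(y)\,dm_\lz(y)$, identity~(i) is exactly the pointwise relation
\[
\pa_t K(t,x,y)=\pa_x\psi(t,x,y)+\frac{2\lz}{x}\psi(t,x,y)=x^{-2\lz}\pa_x\bigl(x^{2\lz}\psi(t,x,y)\bigr).
\]
I would therefore take $x^{2\lz}\psi(t,x,y)$ to be the primitive in $x$ of $s\mapsto s^{2\lz}\pa_t K(t,s,y)$ anchored at infinity, giving the explicit definition
\[
\psi(t,x,y):=-x^{-2\lz}\int_x^\fz s^{2\lz}\pa_t K(t,s,y)\,ds,
\]
after which~(i) holds by construction. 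It is convenient to note that $\pa_t\phi_t=-t^{-1}\wz\phi_t$ with $\wz\phi(v):=(2\lz+1)\phi(v)+v\phi'(v)\in C_c^\fz(0,1)$ and $\int_0^\fz\wz\phi\,dm_\lz=0$ by an integration by parts, so $\pa_t K(t,s,y)=-t^{-1}\tau^{[\lz]}_s\wz\phi_t(y)$ is a mean-zero Hankel kernel at scale $t$ with support in $\{|s-y|<t\}$.

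For (ii), the Hankel translation identity combined with $\supp\phi_t\subset(0,t)$ already forces $K(t,s,y)=0$ as soon as $|s-y|\ge t$. Thus if $x>y+t$, every $s\ge x$ in the defining integral satisfies $s>y+t$ and the integrand vanishes identically, so $\psi\equiv 0$. For the opposite case $y>x+t$, I would pass to the equivalent representation
\[
\psi(t,x,y)=x^{-2\lz}\int_0^x s^{2\lz}\pa_t K(t,s,y)\,ds,
\]
which is legitimate because of the conservation property $\int_0^\fz K(t,s,y)\,dm_\lz(s)=1$; this last identity follows from the symmetry $\tau^{[\lz]}_s\phi_t(y)=\tau^{[\lz]}_y\phi_t(s)$ together with $\int_0^\fz\phi_t\,dm_\lz=1$, and it implies $\int_0^\fz s^{2\lz}\pa_t K(t,s,y)\,ds=0$ after differentiating in $t$. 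On this alternative form, $y>x+t$ forces $s\le x<y-t$ throughout, so the integrand again vanishes.

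For (iii), the cancellation $(K_\mathrm{iii})$ follows by Fubini and the same conservation law: $\int_0^\fz\psi(t,x,y)\,dm_\lz(y)=-x^{-2\lz}\int_x^\fz s^{2\lz}\pa_t(1)\,ds=0$. The size estimate $(K_\mathrm i)$ is the main technical point. From $|\pa_t K(t,s,y)|\ls \frac{1}{t\,m_\lz(I(s,t))}\chi_{\{|s-y|<t\}}$ and the observation that on $\{|x-y|<t\}$ every $s$ appearing in either of the two integrals satisfies $|s-x|<2t$, doubling gives $m_\lz(I(s,t))\sim m_\lz(I(x,t))$. I would then pick whichever representation makes the $x$-integration harmless: for $x\gs t$ the $\int_x^\fz$ form yields $|\psi(t,x,y)|\ls 1/m_\lz(I(x,t))$, while for $x\ls t$ the $\int_0^x$ form gives $|\psi(t,x,y)|\ls x^{-2\lz}\cdot x^{2\lz+1}/t^{2\lz+2}\ls 1/t^{2\lz+1}\sim 1/m_\lz(I(x,t))$. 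The smoothness bound $(K_{\mathrm{ii}})$ is obtained by the same procedure applied to $\pa_y K$, which obeys an analogous estimate with an extra Lipschitz factor $|y-\wz y|/t$ coming from the regularity of $\wz\phi$. The hard part is precisely this case analysis at small $x$: the singular prefactor $x^{-2\lz}$ would be catastrophic if paired naively with the $\int_x^\fz$ representation, and the reason for setting up both representations and proving their equivalence is exactly so that $x^{-2\lz}$ is always matched either by the compensating factor $\int_0^x s^{2\lz}\,ds\sim x^{2\lz+1}$ (for small $x$) or by the doubling comparison $m_\lz(I(s,t))\sim m_\lz(I(x,t))$ (for large $x$).
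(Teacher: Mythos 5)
Your construction is, in substance, the same as the paper's: the paper defines $\psi$ exactly as the primitive anchored at $0$, namely $\psi(t,x,y)=x^{-2\lz}\int_0^x w^{2\lz}\,\prz_t\tau^{[\lz]}_w\phi_t(y)\,dw$ (this is \eqref{defi of psi}), and your $-x^{-2\lz}\int_x^\fz s^{2\lz}\prz_t K(t,s,y)\,ds$ coincides with it via the total-mass identity $\int_0^\fz \prz_t\tau^{[\lz]}_s\phi_t(y)\,\dmz(s)=0$, which the paper also exploits (through $\int_0^\fz D(w,y,z)\,\dmz(w)=1$) when proving \eqref{comp supp psi} in the case $x>y+t$; so the difference is bookkeeping, not method. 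Two caveats. First, your one-line Fubini for $({\rm K_{iii}})$ should be run on the $\int_0^x$ representation rather than the $\int_x^\fz$ one: on the region $\{y>x+t\}$, which has infinite $m_\lz$-measure, the integrand $s^{2\lz}\prz_t K(t,s,y)$ does not vanish (only its $s$-integral does), so the $(s,y)$ double integral in the $\int_x^\fz$ form is not absolutely convergent and the interchange as written is not justified; on the $\int_0^x$ form the interchange is immediate, and that is exactly how the paper argues. Second, $({\rm K_{ii}})$ is where the paper does its real work: a mean value theorem on the translated bump followed by a three-case analysis (comparable $y$ and $t$; $y\gg t$ with $|x-y|<y/2$; $y\gg t$ with $|x-y|\ge y/2$). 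Your reduction to a Lipschitz bound for the $y$-derivative of $\tau^{[\lz]}_s\wz\phi_t$ is the right idea, but that bound itself needs the same small-$s$/large-$s$ dichotomy (the angular integral gains $(t/s)^{2\lz}$ when $s\gg t$), and you must apply the mean value theorem to the kernel difference rather than estimating $\psi(t,x,y)$ and $\psi(t,x,\wz y)$ separately, since near the boundary of the support one of the two can vanish while the other admits no gain of $|y-\wz y|/t$. With these points filled in, your argument is correct and is essentially the paper's proof.
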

\begin{proof}
First, by 
\eqref{Hankel trans}, we observe that
\begin{eqnarray*}
\prz_t\tlz\phi_t(y)&&=-c_\lz t^{-2\lz-2}\dint_0^\pi
 (\sin\theta)^{2\lz-1}\lf[(2\lz+1)\phi\lf(\frac{\sqrt{x^2+y^2-2xy\cos\theta}}t\r)\r.\\
&&\quad\lf.+\frac{\sqrt{x^2+y^2-2xy\cos\theta}}t\phi^\prime
\lf(\frac{\sqrt{x^2+y^2-2xy\cos\theta}}t\r)\r]\,d\theta\noz.
\end{eqnarray*}
Note that \eqref{CR equa:psi} holds if $\psi$ satisfies  that for any $t, x,y$,
$$\prz_t\tlz\phi_t(y)=\prz_x\psi(t, x, y)+\frac{2\lz}x\psi(t, x, y).$$
Thus, we define
\begin{eqnarray}\label{defi of psi}
\quad\quad\,\,\psi(t, x, y):&&=-c_\lz t^{-2\lz-2}x^{-2\lz}\int_0^x \dint_0^\pi (\sin\theta)^{2\lz-1}\lf[(2\lz+1)\phi\lf(\frac{\sqrt{w^2+y^2-2wy\cos\theta}}t\r)\r.\\
&&\quad\lf.+\frac{\sqrt{w^2+y^2-2wy\cos\theta}}t\phi^\prime
\lf(\frac{\sqrt{w^2+y^2-2wy\cos\theta}}t\r)\r]\,d\theta w^{2\lz}dw\noz.
\end{eqnarray}
Then it is easy to see that $\psi$ satisfies the equation \eqref{CR equa:psi}.

Now we prove that \eqref{comp supp psi} holds.
In fact, for all $x,\,y,\,z\in(0, \fz)$, let
$\triangle(x, y, z)$ be the area of a triangle with sides $x$, $y$,
$z$ if such a triangle exists. And then we define
\begin{equation*}
D(x, y, z):=c_\lz 2^{2\lz-2}(xyz)^{-2\lz+1}[\triangle(x, y, z)]^{2\lz-2}
\end{equation*}
if $\triangle(x, y, z)\not=0$, and $D(x, y, z):=0$ otherwise.
By a change of variables argument, we obtain that
\begin{equation*}
\begin{array}[t]{cl}
&c_\lz\dint_0^\pi \phi_t\lf(\sqrt{x^2+y^2-2xy\cos \theta}\r)
(\sin\theta)^{2\lz-1}\,d\theta=\dint_0^\fz \phi_t(z)D(x, y, z)\,\dmz(z).
\end{array}
\end{equation*}
Recall that for all $x,\,z\in (0, \fz)$,
\begin{equation}\label{integ of func D:integ}
\dint_0^\fz D(x, y, z)\,\dmz(y)=1;
\end{equation}
see in \cite[p.\,335,\,(6)]{h}.
%
By change of variables, we write
\begin{eqnarray}\label{repres of psi}
\quad\psi(t, x,y)&=&-t^{-2\lz-2}x^{-2\lz}\int_0^x\inzf D(w, y,z)\lf[(2\lz+1)\phi\lf(\frac zt\r)+\frac zt\phi^\prime\lf(\frac zt\r)\r] z^{2\lz}\,dzw^{2\lz}dw\\
&=&-t^{-1}x^{-2\lz}\int_0^x\inzf D(w, y,z)\prz_z\lf[\lf(\frac zt\r)^{2\lz+1}\phi\lf(\frac zt\r)\r]\,dzw^{2\lz}\,dw\noz\\
&=&-t^{-1}x^{-2\lz}\inzf\int_0^x D(w, y,z)\prz_z\lf[\lf(\frac zt\r)^{2\lz+1}\phi\lf(\frac zt\r)\r]w^{2\lz}\,dw\,dz.\noz
\end{eqnarray}

We first prove $\psi(t, x, y)=0$ if $x>y+t$. To this end, recall that $\supp(\phi)\subset (0, 1)$. Then $\phi(z/t)\not=0$ only if $z\in(0, t)$. Also, by the definition of $D(w, y,z)$,
we see that $D(w, y,z)\not=0$ only if $|y-z|< w<y+z$. Then by \eqref{integ of func D:integ} and
 the fact that $\phi\in C^\fz_c(\R_+)$, we have that
\begin{eqnarray*}
\psi(t, x, y)&=&-t^{-1}x^{-2\lz}\inzf\int_0^\fz D(w, y,z)\prz_z\lf[\lf(\frac zt\r)^{2\lz+1}\phi\lf(\frac zt\r)\r]w^{2\lz}\,dw\,dz\\
&=&-t^{-1}x^{-2\lz}\inzf\int_0^\fz D(w, y,z)w^{2\lz}\,dw\prz_z\lf[\lf(\frac zt\r)^{2\lz+1}\phi\lf(\frac zt\r)\r]\,dz\\
&=&-t^{-1}x^{-2\lz}\inzf\prz_z\lf[\lf(\frac zt\r)^{2\lz+1}\phi\lf(\frac zt\r)\r]\,dz=0.
\end{eqnarray*}

On the other hand, assume that $x<y-t$. Then by the compact support of $\phi$, we see that
$w\le x<y-t\le y-z\le |y-z|$. This together with the definition of $D(w, y, z)$ implies that
$\psi(t, x, y)=0$.

Now we show $\psi$ satisfies ${\rm (K_i)}$. By \eqref{comp supp psi} and the doubling property of $dm_\lz$,  it suffices to show that
for any $t, \,x,\,y$ such that $|x-y|<t$,
\begin{equation}\label{size of psi-2}
|\psi(t, x,y)|\ls \frac1{m_\lz(I(x, t))}.
\end{equation}
From \eqref{repres of psi}, \eqref{integ of func D:integ} and $\phi\in C^\fz_c(\R_+)$, we deduce that
\begin{eqnarray*}
|\psi(t, x,y)|&\ls& t^{-1}x^{-2\lz}\lf|\inzf\int_0^x D(w, y,z)\prz_z\lf[\lf(\frac zt\r)^{2\lz+1}\phi\lf(\frac zt\r)\r]w^{2\lz}\,dw\,dz\r|\noz \ls t^{-1}x^{-2\lz}.
\end{eqnarray*}
Moreover, if $x\le t$, then from \eqref{repres of psi}, \eqref{integ of func D:integ} and $\phi\in C^\fz_c(\R_+)$,
it follows that
\begin{eqnarray*}
|\psi(t, x,y)|&\ls&t^{-2\lz-2}x^{-2\lz}\int_0^x\inzf D(w, y,z)\lf|(2\lz+1)\phi\lf(\frac zt\r)+\frac zt\phi^\prime\lf(\frac zt\r)\r| z^{2\lz}\,dzw^{2\lz}dw\\
&\ls&t^{-2\lz-2}x^{-2\lz}\int_0^x\inzf D(w, y,z) z^{2\lz}\,dzw^{2\lz}dw \ls t^{-2\lz-2}x\ls t^{-2\lz-1}.
\end{eqnarray*}
Thus, ${\rm(K_i)}$ holds.

Now we show $\psi$ satisfies ${\rm (K_{ii})}$.  We first observe that by the assumption that $|y-\wz y|\le (t+|x-y|)/2$
and the doubling property of $dm_\lz$,
$|x-y|+t\sim |x-\wz y|+t$
and
$$m_\lz(I(x, t))+m_\lz(I(y,t))+m_\lz(I(x, |x-y|))\sim m_\lz(I(x, t))+m_\lz(I(\wz y,t))+m_\lz(I(x, |x-\wz y|)).$$
Based on these facts, we may further assume that $y>\wz y$. For otherwise, it is sufficient to show that
\begin{eqnarray}\label{regular of psi-3}\quad\quad
|\psi(t, x, y)-\psi(t, x, \wz y)|\ls \frac1{m_\lz(I(x, t))+m_\lz(I(\wz y,t))+m_\lz(I(x, |x-\wz y|))}\frac {t|y-\wz y|}{(|x-\wz y|+t)^2}.
\end{eqnarray}

Since $y>\wz y$, if $x>y+t$, then $x>\wz y+t$ and by \eqref{comp supp psi}, we see that $\psi(t, x, y)=0$ and $\psi(t, x, \wz y)=0$.
Similarly, if $x<\wz y-t$, then $x<y-t$ and by \eqref{comp supp psi} again, $\psi(t, x, y)=0$ and $\psi(t, x, \wz y)=0$.
Hence, ${\rm (K_{ii})}$ holds trivially if $x>y+t$ or $x<\wz y-t$.
Moreover,  observe that
$$[\wz y-t, y+t]=[\wz y-t, \wz y+t]\cup [y-t, y+t].$$
%
Therefore, by similarity and \eqref{regular of psi-2}, we only need to consider the case that $y-t\le x\le y+t$.
It  then suffices to show that
\begin{equation}\label{regular of psi-2}
|\psi(t, x, y)-\psi(t, x,\wz y)|\ls \frac1{m_\lz(I(y,t))+m_\lz(I(y, |x-y|))}\frac {|y-\wz y|}{t}.
\end{equation}
We write
\begin{eqnarray*}
&&|\psi(t, x, y)-\psi(t, x,\wz y)|\\
&&\quad\ls  t^{-2\lz-2}x^{-2\lz}\dint_0^\pi\int_0^x (\sin\theta)^{2\lz-1}\lf|\phi\lf(\frac{\sqrt{w^2+y^2-2wy\cos\theta}}t\r)\r.\\
&&\hskip6cm\lf.-\phi\lf(\frac{\sqrt{w^2+\wz y^2-2w\wz y\cos\theta}}t\r)\r|w^{2\lz}dwd\theta\\
&&\quad\quad+ t^{-2\lz-2}x^{-2\lz}\dint_0^\pi\int_0^x (\sin\theta)^{2\lz-1}\lf|\frac{\sqrt{w^2+y^2-2wy\cos\theta}}t\phi^\prime\lf(\frac{\sqrt{w^2+y^2-2wy\cos\theta}}t\r)\r.\\
&&\hskip4cm\lf.-\frac{\sqrt{w^2+{\wz y}^2-2w\wz y\cos\theta}}t\phi^\prime\lf(\frac{\sqrt{w^2+{\wz y}^2-2w\wz y\cos\theta}}t\r)\r|w^{2\lz}dwd\theta\\
&&\quad=:{\rm I}+{\rm II}.
\end{eqnarray*}
We only consider the term ${\rm I}$, since for the term ${\rm II}$, we consider the function $\wz\phi(x):=x \phi^\prime (x)$, and then the form of  ${\rm II}$ will be the same as  ${\rm I}$.  We study the following four cases:

For the term ${\rm I}$, we first note that from the mean value theorem,
\begin{eqnarray*}
{\rm I}&& \ls t^{-2\lz-2}x^{-2\lz}\dint_0^\pi\int_0^x (\sin\theta)^{2\lz-1}\lf|\phi^\prime\lf(\frac{\sqrt{w^2+\xi^2-2w\xi\cos\theta}}t\r)\r|\\
&&\quad\times \frac{|\xi-w\cos\theta||y-\wz y|}{t\sqrt{w^2+\xi^2-2w\xi\cos\theta}}w^{2\lz}dwd\theta\\
&&\ls t^{-2\lz-2}x^{-2\lz}\dint_0^\pi\int_0^x (\sin\theta)^{2\lz-1}\lf|\phi^\prime\lf(\frac{\sqrt{w^2+\xi^2-2w\xi\cos\theta}}t\r)\r|
\frac{|y-\wz y|}{t}w^{2\lz}dwd\theta.
\end{eqnarray*}
To continue, we consider the following three cases.

Case (i) $t<y\le 8t$. 
In this case, by $x<y+t$, we have $x<2y\le 16t$. Again, since $|\phi'(x)|\ls 1$, we get that
\begin{eqnarray*}
{\rm I}
&\ls & x^{-2\lz}|y-\wz y|\ t^{-2\lz-3}\dint_0^\pi\int_0^x (\sin\theta)^{2\lz-1}
w^{2\lz}dwd\theta\\
&\ls & x^{-2\lz}|y-\wz y|\ t^{-2\lz-3} x^{2\lz+1}\\
&\ls & |y-\wz y|t^{-2\lz-2}\\
&\ls& \frac1{m_\lz(I(y,t))+m_\lz(I(y, |x-y|))}\frac {|y-y'|}{t}.
\end{eqnarray*}
%

Case (ii) $8t<y$ and $|x-y|<y/2$.  In this case,
$$|y-\xi|<|y-\wz y|\le \frac{t+|x-y|}{2}<\frac5{16}y,$$
which implies that $\xi\sim y\sim \wz y \sim x$. Thus,
we see that
\begin{eqnarray*}
{\rm I}
&\ls & t^{-2\lz-3}y^{-2\lz}|y-\wz y|\dint_0^\fz\int_0^\fz D(\xi, w, z) \lf|\phi^\prime\lf(\frac{z}t\r)\r|
w^{2\lz}dw z^{2\lz}dz\\
&\ls & t^{-2\lz-3}y^{-2\lz}|y-\wz y|\int_0^\fz  \lf|\phi^\prime\lf(\frac{z}t\r)\r|z^{2\lz}dz\\
&\ls & t^{-2}y^{-2\lz}|y-\wz y|\int_0^\fz  \lf|\phi^\prime\lf(z\r)\r|z^{2\lz}dz\\
&\ls & t^{-2}y^{-2\lz}|y-\wz y|\\
&\ls& \frac1{m_\lz(I(y,t))+m_\lz(I(y, |x-y|))}\frac {|y-y'|}{t}.
\end{eqnarray*}

Case (iii)  $8t< y$ and $|x-y|\ge y/2$.  In this case, by $x<y+t$, we have $x\le y/2$. Moreover,
$$\xi>\wz y=y-(y-\wz y)>y-\frac{t+y-x}2=\frac{y+x-t}2>x,$$
which implies that for any $w\in(0, x)$ and $\theta\in(0, \pi)$,
$$\sqrt{w^2+\xi^2-2w\xi\cos\theta}>\xi-w>\xi-x>\frac{y-x-t}2\ge \frac y8\ge t.$$
This together with $\supp(\phi)\subset (0, 1)$ shows that ${\rm I}=0$.

Combining the cases above we conclude that \eqref{regular of psi-2} holds, which implies ${\rm (K_{ii})}$.


Finally, we show that ${\rm (K_{iii})}$ holds.
Indeed, by \eqref{repres of psi} together with \eqref{integ of func D:integ} and $\phi\in C^\fz_c(\R_+)$, we conclude that
\begin{eqnarray*}
&&\inzf \psi(t, x, y)\dmz(y)\\
&&\quad=-t^{-1}x^{-2\lz}\inzf\inzf\int_0^x D(w, y,z)\prz_z\lf[\lf(\frac zt\r)^{2\lz+1}\phi\lf(\frac zt\r)\r]w^{2\lz}\,dw\,dz\dmz(y)\\
&&\quad=-t^{-1}x^{-2\lz}\int_0^x\inzf\prz_z\lf[\lf(\frac zt\r)^{2\lz+1}\phi\lf(\frac zt\r)\r]\,dz w^{2\lz}\,dw\\
&&\quad=-\frac x{(2\lz+1)t}\inzf\prz_z\lf[\lf(\frac zt\r)^{2\lz+1}\phi\lf(\frac zt\r)\r]\,dz =0.
\end{eqnarray*}
This shows ${\rm (K_{iii})}$, and hence finishes the proof of Lemma \ref{lem: constru of func psi}.
\end{proof}

\begin{lem}\label{lem:Merryfield lem}
Let $p\in((2\lz+1)/(2\lz+2), 1]$ and $\phi$ be as in Lemma \ref{lem: constru of func psi}.
Then there exists a positive constant $C$ such that
for any $f,\,g\in\ltz$ with $u(t, x):=\plz f(x)$ satisfying $\sup_{|y-x|<t}|u(t,y)|\in L^p(\R_+,\dmz(x))$,
\begin{eqnarray*}
{\rm I}&&:= \dinrp|\gratx u(t,x)|^2\lf|\phi_t\sharp_\lz g(x)\r|^2 t\dmz(x)\,dt\\
&&\le C \lf[\inrp [f(x)]^2[g(x)]^2\dmz(x)+\dinrp|u(t,x)|^2|Q_t(g)(x)|^2\dlmxt\r],\noz
\end{eqnarray*}
where $Q_t( g)(x) := \big( t \partial_t (\phi_t\sharp_\lz g)(x),\ t \partial_x (\phi_t\sharp_\lz g)(x),\psi( g)(t,x)\big)$ is a vector-valued function, with $\psi( g)(t,x)$ as obtained in Lemma \ref{lem: constru of func psi}.
\end{lem}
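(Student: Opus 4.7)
The plan is to derive the inequality from an integration-by-parts identity based on the $\lz$-harmonicity of $u$, and then close the argument via Cauchy--Schwarz and absorption of $\mathrm{I}$ back to the left-hand side. Set $U := u^2$, $h(t,x) := \phi_t\slz g(x)$ and $\wz h(t,x) := \psi(g)(t,x)$, and split the Bessel Laplacian as $\deltx = \prz_t^2 + L_x$ with $L_x := \prz_x^2 + (2\lz/x)\prz_x$. Since $\deltx u = 0$, we have the pointwise identity $|\gratx u|^2 = \tfrac12\,\deltx(U)$, and hence
\begin{align*}
2\,\mathrm{I} \;=\; \dinrp \prz_t^2 U \cdot h^2\, t\, \dmz(x)\,dt \;+\; \dinrp L_x U \cdot h^2\, t\,\dmz(x)\,dt.
\end{align*}

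Next, I would integrate by parts. The self-adjointness of $L_x$ on $\ltz$ (\textit{i.e.} $\inzf L_x F \cdot G\,\dmz = -\inzf \prz_x F\cdot \prz_x G\,\dmz$) immediately gives $\dinrp L_x U \cdot h^2\, t\,\dmz dt = -4\dinrp u\prz_x u \cdot h\prz_x h\cdot t\,\dmz dt$. For the $\prz_t^2$ piece, two successive integrations by parts in $t$ (justified via a standard $0<\eps<t<N<\fz$ truncation), together with $u(0,\cdot)=f$ and $\phi_t\slz g \to g$ as $t\to 0^+$, produce the key identity
\begin{align*}
\dinrp \prz_t^2 U \cdot h^2\, t\,\dmz dt = -4\dinrp u\prz_t u \cdot h\prz_t h \cdot t\,\dmz dt + \inrp f^2 g^2\,\dmz + 2\dinrp u^2 h\,\prz_t h\,\dmz dt.
\end{align*}
The crucial move is then to rewrite the last term via the Cauchy--Riemann identity $\prz_t h = \prz_x\wz h + (2\lz/x)\wz h$ from Lemma~\ref{lem: constru of func psi}. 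An integration by parts in $x$ on the $\prz_x\wz h$ piece cancels the two $(2\lz/x)$-contributions exactly (a reflection of the Bessel adjoint structure), giving
\begin{align*}
\dinrp u^2 h\,\prz_t h\,\dmz dt = -2\dinrp u\prz_x u \cdot h\wz h\,\dmz dt - \dinrp u^2\prz_x h\cdot \wz h\,\dmz dt.
\end{align*}

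Assembling the pieces, $2\mathrm I - \inrp f^2 g^2\,\dmz$ is a sum of four cross terms. Writing $J := \dinrp u^2 |Q_t g|^2\,\dlmxt$, each of the first three is bounded by $\mathrm I^{1/2} J^{1/2}$ via the Cauchy--Schwarz splitting $(\prz_{\#} u \cdot h\sqrt t)(u \cdot \prz_{\#} h\sqrt t)$, while the last term is bounded directly by $J$ upon pairing $(t\prz_x h)$ and $\wz h$ against the weight $\dmz dt/t$ (both $(t\prz_x h)^2$ and $\wz h^2$ appear in $|Q_t g|^2$). Young's inequality $12\sqrt{\mathrm I J} \leq \mathrm I + 36 J$ then collapses the estimate to $\mathrm I \leq \inrp f^2 g^2\,\dmz + 38 J$, as claimed.

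The main obstacle is the rigorous justification of the integration by parts and the absorption step. Concretely, one must verify that (i) boundary contributions at $t\to 0^+$ and $t\to\fz$ vanish, where the hypothesis that the nontangential maximal of $u$ lies in $L^p(\R_+,\dmz)$ together with $f,g\in\ltz$ supplies the necessary decay and integrability; and (ii) $\mathrm I$ is finite a priori, so that the absorption $12\sqrt{\mathrm I J} \leq \mathrm I + 36 J$ is legal. Both issues are resolved via truncation and passage to the limit. A subtler structural point is that the exact cancellation of the $(2\lz/x)$ terms in handling $\dinrp u^2 h\,\prz_t h$ is essential for the whole scheme, and this cancellation depends intrinsically on both the Bessel adjoint structure and the precise form of $\psi$ supplied by Lemma~\ref{lem: constru of func psi}.
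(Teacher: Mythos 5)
Your proposal is correct and follows essentially the same route as the paper's proof: the identity $2|\gratx u|^2=\deltx(u^2)$ for the $\lz$-harmonic extension, integration by parts in $t$ and in $x$, the substitution $\prz_t(\phi_t\slz g)=\prz_x[\psi(g)]+\tfrac{2\lz}{x}\psi(g)$ from Lemma \ref{lem: constru of func psi} with the exact cancellation of the $\tfrac{2\lz}{x}$ contributions, and then Cauchy--Schwarz with absorption of ${\rm I}$ into the left-hand side. The only point you treat more lightly than the paper is the justification of the integrations by parts in $x$: besides the $t$-boundary terms you list, the boundary contributions at $x\to0$ and $x\to\fz$ (namely $u^2(t,x)\,\phi_t\slz g(x)\,x^{2\lz}\,\psi(g)(t,x)$) must be shown to vanish, which the paper does via explicit estimates such as $|\psi(g)(t,x)|\ls x^{1/2}\|g\|_\ltz$ as $x\to0$ and $|u(t,x)|\ls x^{-\lz}t^{-1/2}\|f\|_\ltz$, $|\psi(g)(t,x)|\ls x^{-\lz}t^{-1/2}\|g\|_\ltz$ as $x\to\fz$, all of which follow from the size and support properties of $\psi$ and of the Poisson kernel.
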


\begin{proof}
First, we claim that $u(t, x)\to0$ as $t\to \fz$.
Indeed, observe that for any $x,y,\,t\in \R_+$ with $|y-x|<t$,
$$|u(t, x)|=\lf|\plz f(x)\r|\le \sup_{|y-z|<t}|u(t,z)|.$$
Since $\sup_{|x-y|<t}|u(t,y)|\in\lpz$, we have that
\begin{eqnarray*}
\lf|\plz f(x)\r|^p&\le& \frac1{m_\lz(I(x, t))} \dint_{I(x,\, t)}\bigg|\sup_{|x-y|<t}|u(t,y)|\bigg|^p\dmzy\\
&\le& \frac1{m_\lz(I(x, t))}\lf\|\sup_{|x-y|<t}|u(t,y)|\r\|_\lpz^p.
\end{eqnarray*}
 This means that $u(t, x)\to0$ as $t\to \fz$ and the claim follows.

We now claim that
\begin{equation}\label{equality for gradient}
2|\gratx u(t,x)|^2=\deltx(u^2(t,x)).
\end{equation}
In fact, recall that $u$ satisfies the equation \eqref{bessel laplace equation}.
We then see that
\begin{eqnarray*}
\deltx(u^2(t,x))=\prz_t^2 u^2+\prz_x^2 u^2+\frac{2\lz}x\prz_x u^2=2\lf[(\prz_t u)^2+(\prz_x u)^2\r]=2|\gratx u(t,x)|^2.
\end{eqnarray*}
This implies claim \eqref{equality for gradient}.

From the claim \eqref{equality for gradient} and integration by parts, we deduce that
\begin{eqnarray*}
2{\rm I}&&=\dinrp  \deltx(u^2(t,x))  \lf|\phi_t\sharp_\lz g(x)\r|^2 t\dmz(x)\,dt\\
&&=-\dinrp\mathcal D^\ast \mathcal D(u^2(t,x))(\phi_t\sharp_\lz g(x))^2 t\,\dmz(x)\,dt
\\
&&\quad\quad+\dinrp \prz^2_t (u^2(t,x))\lf[t(\phi_t\sharp_\lz g(x))^2\r]\,\dmz(x)\,dt\\
&&=-\dinrp  \prz_x(u^2(t,x))\ \prz_x (\phi_t\sharp_\lz g(x))^2 t\,\dmz(x)\,dt\\
&&\quad\quad-\dinrp \prz_t (u^2(t,x))\ \prz_t\lf[t(\phi_t\sharp_\lz g(x))^2\r]\,\dmz(x)dt\\
&&=-4\dinrp u(t,x)\ \prz_x u(t,x)\  \phi_t\sharp_\lz g(x)\  \prz_x(\phi_t\sharp_\lz g(x))t\,\dmz(x)\,dt\\
&&\quad\quad -2 \dinrp u(t,x) \prz_t (u(t,x))\ (\phi_t\sharp_\lz g(x))^2\,\dmz(x)dt\\
&&\quad\quad -4 \dinrp u(t,x) \prz_t (u(t,x))\ t (\phi_t\sharp_\lz g(x))\ \prz_t (\phi_t\sharp_\lz g(x))\,\dmz(x)dt\\
&&=-4\dinrp u(t,x)\gratx u(t,x)\cdot t (\phi_t\sharp_\lz g(x))\ \gratx (\phi_t\sharp_\lz g(x))\ \,\dmz(x)\,dt\\
&&\quad\quad - \dinrp  \prz_t (u^2(t,x))\ (\phi_t\sharp_\lz g(x))^2\,\dmz(x)dt\\
&&=:{\rm A}+{\rm B}.
\end{eqnarray*}

For the term ${\rm A}$, using H\"older's inequality and Cauchy's inequality, we obtain that
\begin{eqnarray}\label{esti A merryfield lem}
|{\rm A}|&\le& 4\lf[\dinrp u^2(t,x)|\gratx (\phi_t\sharp_\lz g(x))|^2 t \dmz(x)\,dt\r]^{\frac12} \\
&&\quad\times\lf[\dinrp \lf|\gratx u(t,x)\r|^2\lf|\phi_t\sharp_\lz g(x)\r|^2t \dmz(x)\,dt\r]^{\frac12}\noz\\
&\le & C\dinrp u^2(t,x)|\gratx (\phi_t\sharp_\lz g(x))|^2 t \dmz(x)\,dt\noz\\
&&\quad + {1\over 8}\dinrp \lf|\gratx u(t,x)\r|^2\lf|\phi_t\sharp_\lz g(x)\r|^2 t \dmz(x)\,dt\noz\\
&= & C\dinrp u^2(t,x)|t\gratx (\phi_t\sharp_\lz g(x))|^2 {\dmz(x)\,dt \over t}
 + {{\rm I}\over 8}.\noz
\end{eqnarray}

For term ${\rm B}$, from integration by parts, we have
\begin{eqnarray*}
{\rm B}&=&  -  \int_{\R_+} u^2(t,x)\ (\phi_t\sharp_\lz g(x))^2\bigg|_{t=0}^{t=\infty} \dmz(x) +  \dinrp  u^2(t,x)\ \prz_t (\phi_t\sharp_\lz g(x))^2\,\dmz(x)dt \\
&=:& {\rm B}_1+{\rm B}_2.
\end{eqnarray*}
It is easy to see that
\begin{equation}\label{esti B1 merryfield lem}
{\rm B}_1 \sim\int_{\R_+} f(x)^2g(x)^2 \dmz(x).
\end{equation}
For ${\rm B}_2$, we get that
\begin{eqnarray*}
{\rm B}_2=  2\dinrp  u^2(t,x)\ \phi_t\sharp_\lz g(x)\ \prz_t (\phi_t\sharp_\lz g(x))\,\dmz(x)dt.
\end{eqnarray*}
Then, using  Lemma \ref{lem: constru of func psi}, for the function $\phi$, there exists a function $\psi(t,x,y)$ such that
$\psi$ satisfies the equation \eqref{CR equa:psi} and $\psi$ satisfies all properties listed in (ii)--(v) in Lemma \ref{lem: constru of func psi}. Hence, we get that
\begin{eqnarray*}
{\rm B}_2&=& 2 \dinrp  u^2(t,x)\ \phi_t\sharp_\lz g(x)\ \mathcal D^*\big(\psi(g)(t,x)\big)\dmz(x)dt\\
&=&  2\dinrp  u^2(t,x)\ \phi_t\sharp_\lz g(x)\ \prz_x(\psi(g))(t, x)\ x^{2\lambda}\, dxdt\\
&&\quad+  2\dinrp  u^2(t,x)\ \phi_t\sharp_\lz g(x)\ \frac{2\lz}{x}\psi(g)(t, x)\dmz(x)dt\\
&=:& 2\mathcal{B}_{21}+ 2\mathcal{B}_{22}.
\end{eqnarray*}
For  $\mathcal{B}_{21}$, integration by parts, gives
\begin{eqnarray*}
\mathcal{B}_{21} &=&  -\dinrp  \prz_x \bigg( u^2(t,x)\ \phi_t\sharp_\lz g(x)\ x^{2\lz}\bigg) \psi(g)(t, x)\ \, dxdt\\
&&  +  \int_{\R_+} u^2(t,x)\ \phi_t\sharp_\lz g(x)\ x^{2\lambda}\psi(g)(t, x)\bigg|_{x=0}^{x=\infty}\, dt\\
&=&- 2\dinrp  u(t,x) \prz_x u(t,x)\ \phi_t\sharp_\lz g(x)\ x^{2\lz}\ \psi(g)(t, x)\ \, dxdt\\
&& -\dinrp    u^2(t,x)\  \prz_x \big(\phi_t\sharp_\lz g(x)\big) \ x^{2\lz}\ \psi(g)(t, x)\ \, dxdt\\\\
&& -\dinrp   u^2(t,x)\ \phi_t\sharp_\lz g(x)\ 2\lz\,x^{2\lz -1}\ \psi(g)(t, x)\ \, dxdt\\\\
&&  +  \int_{\R_+} u^2(t,x)\ \phi_t\sharp_\lz g(x)\ x^{2\lambda}\psi(g)(t, x)\bigg|_{x=0}^{x=\infty}\, dt,
\end{eqnarray*}
where the third term on the right-hand side equals $- \mathcal{B}_{22}$. Hence,
\begin{eqnarray*}
{\rm B}_2&=&  - 2\dinrp  u(t,x) \prz_x u(t,x)\ \phi_t\sharp_\lz g(x)\ x^{2\lz}\ \psi(g)(t, x)\ \, dxdt\\
&& -\dinrp    u^2(t,x)\  \prz_x \big(\phi_t\sharp_\lz g(x)\big) \ x^{2\lz}\ \psi(g)(t, x)\ \, dxdt\\
&&  +  \int_{\R_+} u^2(t,x)\ \phi_t\sharp_\lz g(x)\ x^{2\lambda}\psi(g)(t, x)\bigg|_{x=0}^{x=\infty}\, dt,\\
&=:& {\rm B}_{21}+ {\rm B}_{22}+{\rm B}_{23}.
\end{eqnarray*}
For the term ${\rm B}_{23}$, we first note that
\begin{eqnarray*}
{\rm B}_{23} &=&  - \int_{\R_+} \lim_{x\to0} \Big( u^2(t,x)\ \phi_t\sharp_\lz g(x)\ x^{2\lambda}\psi_t \sharp_\lz g(x)\Big) \, dt\\
&&+ \int_{\R_+} \lim_{x\to\infty} \Big( u^2(t,x)\ \phi_t\sharp_\lz g(x)\ x^{2\lambda}\psi_t \sharp_\lz g(x)\Big) \, dt\\
&=:& {\rm B}_{231}+ {\rm B}_{232}.
\end{eqnarray*}
We claim that
\begin{eqnarray} \label{claim boundary}
{\rm B}_{231}={\rm B}_{232}=0.
\end{eqnarray}
We first consider ${\rm B}_{231}$.  Letting $x\to0+$ and applying H\"older's inequality, we see that
\begin{eqnarray*}
\lim_{x\to0}|u(t, x)|&\le & \lim_{x\to0}\lf[\inzf \plz (x, y)^2y^{2\lz}dy\r]^{\frac12}\|f\|_\ltz\\
&=&\|f\|_\ltz\frac{2\lz t}{\pi}\lf\{\inzf\lf[\dint_0^\pi \frac{(sin\theta)^{2\lz-1}}{(y^2+t^2)^{\lz+1}}\,d\theta\r]^2 y^{2\lz}dy\r\}^{\frac12}\\
&\ls&t\|f\|_\ltz\lf[\inzf\frac{y^{2\lz}}{(y^2+t^2)^{2\lz+2}}\,dy\r]^\frac12\\
&\ls& t^{-\lz-\frac12}\|f\|_\ltz,
\end{eqnarray*}
and
\begin{eqnarray*}
&&\lim_{x\to0}|\phi_t\sharp_\lz g(x)|\\
&&\quad\ls\lim_{x\to0}\lf|\inzf \dint_0^\pi\lf[t^{-2\lz-1}\phi\lf(\frac{\sqrt{x^2+y^2-2xycos\theta}}{t}\r)(sin\theta)^{2\lz-1}\,d\theta\r]^2 y^{2\lz}\,dy\r|^\frac12\|g\|_\ltz\\
&&\quad\sim\lf|\inzf \lf[t^{-2\lz-1}\phi\lf(\frac{y}{t}\r)\r]^2 y^{2\lz}\,dy\r|^\frac12\|g\|_\ltz\\
&&\quad\sim t^{-\lz-\frac12}\|\phi\|_\ltz\|g\|_\ltz.
\end{eqnarray*}
Moreover, by \eqref{comp supp psi}, ${\rm (K_{i})}$ and \eqref{defi of psi}, we see that
$$\|\psi\|_\loz\ls1\,\,{\rm and}\,\,|\psi(t, x, y)|\ls\frac{x}{t^{2\lz+2}}.$$
Thus, we have that
\begin{eqnarray*}
\lim_{x\to0}|\psi(g)(t, x)|&\le& \lim_{x\to0}\|\psi\|^\frac12_\loz\lf[\inzf|\psi(t, x,y)|g(y)|^2\dmz(y)\r]^{\frac12}\\
&\ls&\lim_{x\to0} x^\frac12\|g\|_\ltz=0.
\end{eqnarray*}
Therefore, we obtain that
$$\lim_{x\to0} \Big| u^2(t,x)\ \phi_t\sharp_\lz g(x)\ x^{2\lambda}\psi(g)(t, x)\Big|=0,$$
which gives that ${\rm B}_{231}=0$.  Next we verify the term ${\rm B}_{232}$.  Note that
$$\lf|\plz(x, y)\r|\ls \frac1{m_\lz(I(x, |x-y|+t))}\frac t{|x-y|+t}.$$
 Then by H\"older's inequality, we have that
\begin{align*}
|u(t,x)|^2&\ls \|f\|_\ltz^2\sum_{k=0}^\fz 2^{-2k}\dint_{|x-y|<2^kt}\frac1{[m_\lz(I(x, 2^{k-1}t))]^2}\dmz(y)\\
&\ls\|f\|_\ltz^2\frac1{x^{2\lz}t}.
\end{align*}
Moreover, from \eqref{size of psi-2} and H\"older inequality,  we deduce that
\begin{eqnarray*}
|\psi(g)(t, x)|&\le&\|\psi\|^\frac12_\loz\lf[\inzf|\psi(t,x, y)||g(y)|^2\dmz(y)\r]^{\frac12}\ls x^{-\lz}t^{-1/2}\|g\|_\ltz.
\end{eqnarray*}
By these and the fact that
\begin{eqnarray*}
|\phi_t\sharp_\lz g(x)|&\le&\|\phi\|_\ltz\|g\|_\ltz,
\end{eqnarray*}
we obtain that
$$ \lim_{x\to\infty} \Big| u^2(t,x)\ \phi_t\sharp_\lz g(x)\ x^{2\lambda}\psi(g)(t, x)\Big| =0,$$
which implies that
$${\rm B}_{232}=0.$$
Hence, the claim \eqref{claim boundary} holds.



Similar to the estimate for the term ${\rm A}$, as for the term ${\rm B}_{21}$, using H\"older's inequality and Cauchy's inequality, we obtain that
\begin{eqnarray}\label{esti B21 merryfield lem}
{\rm B}_{21}&\leq& {1\over 8} \dinrp   |\gratx u(t,x)|^2  |\phi_t\sharp_\lz g(x)|^2\ t \dmz(x)dt\\
&&\quad +C\dinrp   | u(t,x)|^2  |\psi(g)(t, x)|^2\ {\dmz(x)dt\over t}\noz\\
&=& {{\rm I}\over 8}
 +C\dinrp  | u(t,x)|^2  |\psi(g)(t, x)|^2\ {\dmz(x)dt\over t}.\noz
\end{eqnarray}
Again, for the term ${\rm B}_{22}$ we have
\begin{eqnarray}\label{esti B22 merryfield lem}
{\rm B}_{22}&\ls&  \dinrp  |u(t,x)|^2  \Big| t\prz_x\big(\phi_t\sharp_\lz g(x)\big)\Big|^2\  {\dmz(x)dt\over t} +\dinrp  | u(t,x)|^2  |\psi(g)(t, x)|^2\ {\dmz(x)dt\over t}.\noz
\end{eqnarray}

Combining the estimates of ${\rm A}$ and ${\rm B}$, \eqref{esti A merryfield lem}, \eqref{esti B1 merryfield lem}, \eqref{esti B21 merryfield lem},
 \eqref{esti B22 merryfield lem} and \eqref{claim boundary}, and
 by moving the two terms ${{\rm I}\over 8}$ to the left-hand side,  we obtain that
\begin{eqnarray*}
{\rm I}\ls&&  \int_{\R_+} f(x)^2g(x)^2 \dmz(x) +\dinrp |u(t,x)|^2|t\partial_x (\phi_t\sharp_\lz g(x))|^2 {\dmz(x)\,dt \over t}\\
&&\quad+\dinrp |u(t,x)|^2|t\partial_t (\phi_t\sharp_\lz g(x))|^2 {\dmz(x)\,dt \over t}+\dinrp  | u(t,x)|^2  |\psi( g)(t,x)|^2\ {\dmz(x)dt\over t}.
\end{eqnarray*}
We now define
$$ Q_t( g)(x) := \big( t \partial_t (\phi_t\sharp_\lz g(x)),\ t \partial_x (\phi_t\sharp_\lz g(x)),\psi( g)(t,x)\big).  $$
Then we have
\begin{eqnarray*}
{\rm I}&\ls&  \int_{\R_+} f(x)^2g(x)^2 \dmz(x)
+\dinrp |u(t,x)|^2 |Q_t( g)(x)|^2 {\dmz(x)\,dt \over t}.
\end{eqnarray*}
This finishes the proof of Lemma \ref{lem:Merryfield lem}.
\end{proof}

Next we have the following result for the product case, which follows from the iteration of Lemma \ref{lem:Merryfield lem}.
Before stating our next Lemma, we introduce the notation $\phi_{t_1}\sharp_{\lz,\,1} g(x_1, x_2)$, $\phi_{t_2}\sharp_{\lz,\,2} g(x_1, x_2)$ and  $\phi_{t_1}\phi_{t_2}\sharp_{\lz,\,1,\,2} g(x_1, x_2)$
to denote the convolution with respect to the first, second and both variables, respectively.

\begin{lem}\label{lem: product Merryfield lem}
Let $u(t_1, t_2, x_1, x_2):=\plzo\plzt f(x_1, x_2)$ and $\phi$ be a smooth function as in Lemma \ref{lem: constru of func psi}. 
Then for $f,\,g\in L^2(\rlz)$, there exists a positive constant $C$  such that
\begin{eqnarray*}
{\rm \tilde I}&:=&\dinrp\dinrp\lf|\gratxo\gratxt u(t_1, t_2, x_1, x_2)\r|^2 \\
&&\quad\quad\times\lf|\phi_{t_1}\phi_{t_2}\sharp_{\lz,\,1,\,2} g(x_1, x_2)\r|^2 t_1 t_2\dmzdt\\
&\le& C\bigg\{\dinrp[f(x_1, x_2)]^2 [g(x_1, x_2)]^2\dmzd\\
&&\quad+\inrp\dinrp\lf|\plzt f(x_1, x_2)\r|^2\lf[ Q^{(2)}_{t_2}(g)(x_1,x_2) \r]^2\dlmxtt\dmzo\\
&&\quad+\inrp\dinrp \lf|\plzo f(x_1, x_2)\r|^2 \lf[ Q^{(1)}_{t_1}(g)(x_1,x_2) \r]^2\,\dlmxto\dmzt\\
&&\quad+\dinrp\dinrp | u(t_1,t_2,x_1, x_2)|^2  \Big| Q^{(1)}_{t_1} Q^{(2)}_{t_2}(g)(x_1,x_2) \Big|^2 {\dmzdt\over t_1t_2}\bigg\}.
\end{eqnarray*}

Here the operator $Q^{(1)}_{t_1}$ is defined as
$$Q^{(1)}_{t_1}(g)(x_1,x_2) := \big( t_1 \partial_{t_1} (\phi_{t_1}\sharp_{\lz,\,1} g)(x_1,x_2),\ t_1 \partial_{x_1} (\phi_{t_1}\sharp_{\lz,\,1} g)(x_1,x_2),  \psi( g(\cdot,x_2) )(t_1,x_1)\big),  $$
where $$\psi( g(\cdot,x_2) )(t_1,x_1) = \int_0^\infty \psi(t_1,x_1,y_1)g(y_1,x_2) \dmz(y_1)$$ is obtained from Lemma  \ref{lem: constru of func psi}.  The definition of $Q^{(2)}_{t_2}(g)(x_1,x_2) $ is similar.
\end{lem}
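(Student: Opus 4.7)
The strategy is to iterate Lemma \ref{lem:Merryfield lem} twice, once in each pair of variables. Two observations make the iteration go through. First, since $\phi_{t_1}\sharp_{\lz,\,1}$, $\phi_{t_2}\sharp_{\lz,\,2}$, $\nabla_{t_j,x_j}$ and the Poisson semigroups $P^{[\lz]}_{t_j}$ act on disjoint pairs of variables, any two of them commute, including the identity $Q^{(1)}_{t_1}[\phi_{t_2}\sharp_{\lz,\,2}h]=\phi_{t_2}\sharp_{\lz,\,2}[Q^{(1)}_{t_1}h]$; for the first two ($\phi$-based) components this is automatic, and for the $\psi$-component it reduces to a Fubini interchange between the $y_1$-integral in \eqref{defi of psi} and the $z_2$-integral defining $\tlz\phi_{t_2}$ via \eqref{Hankel trans}. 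Second, for each fixed $(t_2,x_2)$ the function $v(t_1,x_1):=\nabla_{t_2,x_2}u(t_1,t_2,x_1,x_2)$ equals $P^{[\lz]}_{t_1}\big[\nabla_{t_2,x_2}P^{[\lz]}_{t_2}f(\cdot,x_2)\big](x_1)$ and is therefore componentwise $\lz$-harmonic in $(t_1,x_1)$, so the one-parameter Lemma \ref{lem:Merryfield lem} applies to each of its scalar components.

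\textbf{Two applications.} I first freeze $(t_2,x_2)$ and apply Lemma \ref{lem:Merryfield lem} componentwise to $v$ in the variables $(t_1,x_1)$, with boundary data $\nabla_{t_2,x_2}P^{[\lz]}_{t_2}f(\cdot,x_2)$ and scalar test data $\phi_{t_2}\sharp_{\lz,\,2}g(\cdot,x_2)$, so that $\phi_{t_1}\phi_{t_2}\sharp_{\lz,\,1,\,2}g$ emerges on the left after using $|\nabla_{t_1,x_1}\nabla_{t_2,x_2}u|^2=|\nabla_{t_1,x_1}v|^2$. Integrating the resulting pointwise bound against $t_2\,\dmzt$ shows that ${\rm \tilde I}\lesssim T_1+T_2$, where the boundary term is
\[
T_1:=\iint\!\iint|\nabla_{t_2,x_2}P^{[\lz]}_{t_2}f(x_1,x_2)|^2\,|\phi_{t_2}\sharp_{\lz,\,2}g(x_1,x_2)|^2\,t_2\,d\mu_\lz\,dt_2
\]
and the interior term is
\[
T_2:=\iiiint|\nabla_{t_2,x_2}u|^2\,\bigl|Q^{(1)}_{t_1}\bigl[\phi_{t_2}\sharp_{\lz,\,2}g(\cdot,x_2)\bigr](x_1)\bigr|^2\,\frac{\dmzo\,dt_1}{t_1}\,t_2\,\dmzt.
\]
For $T_1$, I fix $x_1$ and recognise the inner double integral as the one-parameter expression with boundary data $f(x_1,\cdot)$ and test function $g(x_1,\cdot)$; Lemma \ref{lem:Merryfield lem} yields the first two summands of the claim after integrating in $x_1$. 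For $T_2$, I commute $Q^{(1)}_{t_1}$ past $\phi_{t_2}\sharp_{\lz,\,2}$ and, for each fixed $(t_1,x_1)$, apply the one-parameter lemma componentwise in $(t_2,x_2)$ to the Poisson extension $u(t_1,\cdot,x_1,\cdot)$ of $P^{[\lz]}_{t_1}f(x_1,\cdot)$, with the vector $Q^{(1)}_{t_1}g(x_1,\cdot)$ playing the role of $g$; integrating in $\dmzo\,dt_1/t_1$ and using $Q^{(2)}_{t_2}Q^{(1)}_{t_1}=Q^{(1)}_{t_1}Q^{(2)}_{t_2}$ produces the remaining two summands.

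\textbf{Main obstacle.} The most delicate point is verifying the one-parameter hypothesis $\sup_{|y-x|<t}|u(t,y)|\in L^p$ on each slice at each stage; its role in Lemma \ref{lem:Merryfield lem} was to force the boundary term at $t=\infty$ to vanish and to legitimise the integration-by-parts steps. This I expect to follow, for all slices $f(x_1,\cdot)$, $f(\cdot,x_2)$, $P^{[\lz]}_{t_1}f(x_1,\cdot)$ and $P^{[\lz]}_{t_2}f(\cdot,x_2)$, from the $L^2$-boundedness of the one-parameter Bessel non-tangential maximal operator combined with Fubini and the standing assumption $f,g\in L^2(\rlz)$. Modulo this verification and the three commutation identities above, the proof is a clean two-step iteration of Lemma \ref{lem:Merryfield lem}.
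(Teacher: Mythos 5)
Your proposal follows essentially the same route as the paper: there the lemma is likewise proved by iterating Lemma \ref{lem:Merryfield lem} twice, first in $(t_1,x_1)$ with boundary data $\gratxt\plzt f(\cdot,x_2)$ and test factor $\phi_{t_2}\sharp_{\lz,\,2}g(\cdot,x_2)$, and then in $(t_2,x_2)$ separately for the resulting boundary and interior terms, using exactly your commutation of the linear integral operator $Q^{(1)}_{t_1}$ with $\phi_{t_2}\sharp_{\lz,\,2}$ since they act on different variables. The integrability point you flag is treated no more elaborately in the paper (for $f\in L^2(\rlz)$ the required vanishing of the Poisson extensions as $t_i\to\fz$ is immediate from $L^2$ bounds), so your argument matches the published proof.
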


\begin{proof}
By Lemma \ref{lem:Merryfield lem} for $t_1$ and $x_1$ and the conservation property of Poisson semigroup, we have that
\begin{eqnarray*}
{\rm \tilde{I}}&&=\dinrp\dinrp\lf|\gratxo \plzo\Big(\gratxt \plzt f(\cdot, t_2, \cdot, x_2)  \Big) (t_1,x_1)\r|^2\\
&&\quad\times\lf|  \phi_{t_1}  \sharp_{\lz,\,1} \Big(\phi_{t_2}\sharp_{\lz,\,2} g(\cdot, x_2) \Big)(x_1)  \r|^2 t_1 \ t_2 \dmzdt\\
&&\ls \dinrp\inrp\lf|\gratxt\lf(\plzt  f\r)(x_1, x_2)\r|^2\lf|\lf(\phi_{t_2}\sharp_{\lz,\,2}  g\r)(x_1, x_2)\r|^2\dmzo\ t_2\dmzt\,dt_2\\
&&\quad+\dinrp\dinrp\lf|\gratxt\lf(\plzo\plzt  f\r)(x_1, x_2)\r|^2\\
&&\hskip2cm\times \lf[ Q^{(1)}_{t_1} ( \phi_{t_2}\sharp_{\lz,\,2} g(\cdot, x_2) )(x_1) \r]^2\dlmxto\,t_2\dmzt\,dt_2\\
&&= \inrp\ \dinrp\lf|\gratxt\lf(\plzt  f(x_1, \cdot)\r)(x_2)\r|^2\lf|\lf(\phi_{t_2}\sharp_{\lz,\,2}  g(x_1, \cdot)\r)(x_2)\r|^2\ t_2\dmzt\,dt_2\ \dmzo\\
&&\quad+\dinrp\dinrp\lf|\gratxt \plzt \lf(\plzo  f(x_1, \cdot) \r)(x_1, x_2)\r|^2\\
&&\hskip2cm\times \lf[ \phi_{t_2}\sharp_{\lz,\,2}\big( Q^{(1)}_{t_1} (  g(\cdot, \cdot) )(x_1)\big)(x_2) \r]^2\dlmxto\,t_2\dmzt\,dt_2\\
&&=:{\rm {\tilde I_1}+{\tilde I_2}},
\end{eqnarray*}
where in the last but second equality, we use the fact that the order of $\sharp_{\lz,\,2}$ and $Q^{(1)}_{t_1}$ can be changed since they are acting on different variables and $Q^{(1)}_{t_1}$ is a linear integral operator.

Now we apply Lemma \ref{lem:Merryfield lem} to ${\rm {\tilde I_1}}$ and see that
\begin{eqnarray*}
{\rm {\tilde I_1}}&&\ls \dinrp |f(x_1, x_2)|^2 |g(x_1, x_2)|^2\dmzo\dmzt\\
&&\quad+\inrp\dinrp\lf|\plzt f(x_1, x_2)\r|^2\lf[ Q^{(2)}_{t_2}(g)(x_1,x_2)  \r]^2\dlmxtt\dmzo.
\end{eqnarray*}
Similarly, another application of Lemma \ref{lem:Merryfield lem} yields that
\begin{eqnarray*}
{\rm {\tilde I_2}}
&&\ls \inrp\dinrp \lf|\plzo f(x_1, x_2)\r|^2 \lf[ Q^{(1)}_{t_1}(g)(x_1,x_2)  \r]^2\dmzt\,\dlmxto\\
&&\quad+\dinrp\dinrp | u(t_1,t_2,x_1, x_2)|^2  \Big| Q^{(1)}_{t_1} Q^{(2)}_{t_2}(g)(x_1,x_2) \Big|^2 {\dmzdt\over t_1t_2}.
\end{eqnarray*}
This finishes the proof of Lemma \ref{lem: product Merryfield lem}.
\end{proof}

\begin{proof}[\bf Proof of $\|f\|_{H^p_{S_u}( \rlz)} \leq C
\|f\|_{H^p_{\cn_P}(\rlz)}$]

\hskip.2cm

For any $\alpha>0$ and  $f\in\ltzd$ satisfying
$\cn_P(f) \in\lpzd $, 
we define
\begin{eqnarray*}
{\mathcal A}(\alpha):=\lf\{(x,y)\in \R_+\times\R_+:\
\cms\big(\chi_{\cn_P(f)>\alpha\}}\big)(x,y)<\frac1{200}\frac1{2^{4\lz+2}}\r\},
\end{eqnarray*}
where $\cms$ is the strong maximal function defined in \eqref{stro max func-defn}.
We first claim that
%
\begin{eqnarray}\label{step1}
&&\iint_{{\mathcal A}(\az)}S_u^2(f)(x_1,x_2)\dmzd\\
&&\quad\leq\iiiint_{R^{*}}\big| t_1t_2\nabla_{t_1,\,y_1}\nabla_{t_2,\,y_2} u(t_1,t_2,y_1,y_2) \big|^2\frac{d\mu_\lz(y_1,y_2)\,dt_1dt_2}{t_1t_2},\noz
\end{eqnarray}
where for $t_1,\,t_2,\,y_1,\,y_2\in\R_+$, $R(y_1, y_2, t_1, t_2):=I(y_1,t_1)\times I(y_2,t_2)$ and
$$R^{*}:=\Big\{(y_1,y_2,t_1,t_2):\ \dfrac{\mu_\lz(\{ \cn_P(f)>\alpha\}\cap
R(y_1,y_2,t_1,t_2))} {\ \mu_\lz(R(y_1,y_2,t_1,t_2))}<\frac1{200}\frac1{2^{4\lz+2}}\Big\}.$$

Indeed,  observe that
\begin{eqnarray*}
\lefteqn{\iint_{{\mathcal A}(\az)}S_u^2(f)(x_1,x_2)\dmzd}\\
&& \quad\leq\iint_{\R_+\times \R_+}\iint_{\R_+\times \R_+}\iint_{{\mathcal A}(\az)\cap R(y_1, y_2, t_1, t_2)}\\
&&\hskip1cm\big| \nabla_{t_1,\,y_1}\nabla_{t_2,\,y_2} u(t_1,y_1,t_2,y_2) \big|^2
t_1t_2\frac{\dmzd\ d\mu_\lambda(y_1,y_2) dt_1dt_2}{m_\lz(I(x_1,t_1))m_\lz(I(x_2,t_2))}.
\end{eqnarray*}
For any $(y_1,y_2,t_1,t_2)\in \R_+\times\R_+\times\R_+\times\R_+$ with $R(y_1,y_2,t_1,t_2) \bigcap
{\mathcal A}(\az)\not=\emptyset$,
there exists some $(x_1,x_2)\in R(y_1,y_2,t_1,t_2) \bigcap
{\mathcal A}(\az)$ such that
\begin{eqnarray*}
\cms\big(\chi_{\cn_P(f)>\alpha\}}\big)(x_1,x_2)<\frac1{200}\frac1{2^{4\lz+2}}.
\end{eqnarray*}
Hence we have
\begin{eqnarray*}
\dfrac{\mu_\lz\lf(\{ \cn_P(f)>\alpha\}\bigcap R(y_1,y_2,t_1,t_2)\r)}{\mu_\lz(R(y_1,y_2,t_1,t_2))}<\frac1{200}\frac1{2^{4\lz+2}}.
\end{eqnarray*}
Then by the fact that for any $y_1\in I(x_1, t_1)$ and $y_2\in I(x_2, t_2)$,
$$m_\lz(I(x_1,t_1))\sim m_\lz(I(y_1,t_1))\quad {\rm and}\quad m_\lz(I(x_2,t_2))\sim m_\lz(I(y_2,t_2)),$$
we have \eqref{step1} and the claim holds.
%

%
%
%
%
%
%
%
%
%
%

Let $ g(x,y):=\chi_{\{ \cn_P(f)\leq \alpha\}}(x,y)$ and $\phi\in C^\fz_c(\R_+)$ such that
$\supp(\phi)\subset (0, 1)$, $\phi\equiv 1$ on $(0,1/2]$ and $0\le \phi(x)\le 1$ for all $x\in\R_+$.
Then for
$(x_1,x_2,t_1,t_2)\in R^{*}$, we have
\begin{align}\label{step2}
 &\phi_{t_1}\phi_{t_2}\sharp_{\lz,\,1,\,2} g(x_1, x_2)\\
 &\quad=\iint_{\R_+\times\R_+} \tau_{x_1}^{[\lambda]}\tau_{x_2}^{[\lambda]}(\phi_{t_1}\phi_{t_2})(y_1,y_2)\chi_{\{ \cn_P(f)\leq
\alpha\}}(y_1,y_2) dm_\lz(y_1)dm_\lz(y_2) \nonumber\\
 &\quad=\iint_{  \{ \cn_P(f)\leq
\alpha\} \cap R(x_1,\,x_2,\,t_1,\,t_2) } \tau_{x_1}^{[\lambda]}\tau_{x_2}^{[\lambda]}(\phi_{t_1}\phi_{t_2})(y_1,y_2)  dm_\lz(y_1)dm_\lz(y_2) \nonumber\\
&\quad\ge \iint_{  \{ \cn_P(f)\leq
\alpha\} \cap R(x_1,\,x_2,\,t_1/2,\,t_2/2) }  dm_\lz(y_1)dm_\lz(y_2) \gs1,\nonumber
\end{align}
where the last inequality follows from the fact that
\begin{eqnarray*}
&& \mu_\lz(\{(y_1,y_2)\in\R_+\times \R_+:\,\, \cn_P(f)(y_1,y_2)\leq
\alpha\} \cap R(x_1,x_2,t_1/2,t_2/2) ) \\
&&\quad\ge \mu_\lz(\{(y_1,y_2)\in\R_+\times \R_+:\,\, \cn_P(f)(y_1,y_2)\leq
\alpha\} \cap R(x_1,x_2,t_1,t_2))\\
&&\quad\quad- \mu_\lz(R(x_1,x_2,t_1,t_2)\setminus R(x_1,x_2,t_1/2,t_2/2))\\
&&\quad\ge \mu_\lz (R(x_1,x_2,t_1/2,t_2/2))- \frac1{200}\frac1{2^{4\lz+2}}\mu_\lz( R(x_1,x_2,t_1,t_2) ) \\
&& \quad\gs \mu_\lz( R(x_1,x_2,t_1,t_2) ).
\end{eqnarray*}


Combining (\ref{step1}) and (\ref{step2}), and then using Lemma \ref{lem: product Merryfield lem}, we have
\begin{eqnarray*}
&&\iint_{{\mathcal A}(\az)}S_u^2(f)(x_1,x_2)\dmzd\\
&&\quad \ls \iiiint_{R^{*}}\big| t_1t_2\nabla_{t_1,\,y_1}\nabla_{t_2,\,y_2} u(t_1,t_2,y_1,y_2) \big|^2
\lf| \phi_{t_1}\phi_{t_2}\sharp_{\lz,\,1,\,2} g(y_1, y_2) \r|^2\frac{d\mu_\lz(y_1,y_2)\,dt_1dt_2}{t_1t_2}\nonumber\\
&&\quad\ls\bigg\{\dinrp[f(x_1, x_2)]^2 [g(x_1, x_2)]^2\dmzd\\
&&\quad\quad+\inrp\dinrp\lf|\plzt f(x_1, x_2)\r|^2\lf| Q^{(2)}_{t_2}(g)(x_1,x_2) \r|^2\dlmxtt\dmzo\\
&&\quad\quad+\inrp\dinrp \lf|\plzo f(x_1, x_2)\r|^2 \lf| Q^{(1)}_{t_1}(g)(x_1,x_2) \r|^2\,\dlmxto\dmzt\\
&&\quad\quad+\dinrp\dinrp | u(t_1,t_2,x_1, x_2)|^2  \Big| Q^{(1)}_{t_1} Q^{(2)}_{t_2}(g)(x_1,x_2) \Big|^2 {\dmzdt\over t_1t_2}\bigg\}\\
&&\quad=: {\rm I+II+III+IV}.\nonumber
\end{eqnarray*}

For the term ${\rm I}$, we have
\begin{eqnarray*}
{\rm I}&=&\iint_{\{\cn_P(f)\leq \alpha\}}f^2(y_1, y_2)d\mu_\lz(y_1,y_2)\leq\iint_{\{\cn_P(f)\leq \alpha\}}|\cn_P(f)(y_1, y_2)|^2d\mu_\lz(y_1,y_2).
\end{eqnarray*}

For the term ${\rm II}$, we claim that: if $|Q^{(2)}_{t_2}(g)(x_1,x_2)|\not=0$, then there exists some $w_2$
such that $(x_1,w_2)\in \{\cn_P(f)\leq \alpha\}$, and satisfies
$|x_2-w_2|< t_2$.
To see this, recall that
$$Q^{(2)}_{t_2}(g)(x_1,x_2) := \big( t_2 \partial_{t_2} (\phi_{t_2}\sharp_{\lz,\,2} g(x_1,x_2)),\ t_2 \partial_{x_2} (\phi_{t_2}\sharp_{\lz,\,2} g(x_1,x_2)),  \psi( g(x_1, \cdot) )(t_2,x_2)\big),  $$
where $$\psi( g(x_1, \cdot) )(t_2,x_2) = \int_0^\infty \psi(t_2,x_2,y_2)g(x_1,y_2) \dmz(y_2)$$ is obtained from Lemma  \ref{lem: constru of func psi}. Hence, if $|Q^{(2)}_{t_2}(g)(x_1,x_2)|\not=0$, then we have that one of the three terms $\partial_{t_2} (\phi_{t_2}\sharp_{\lz,\,2} g)(x_1,x_2)$, $\partial_{x_2} (\phi_{t_2}\sharp_{\lz,\,2} g)(x_1,x_2)$, $\psi( g(x_1, \cdot) )(t_2,x_2)$ must be non-zero. Hence, there must be some $w_2$
such that $(x_1,w_2)$ is in the support of the function $g$, and satisfies
$|x_2-w_2|< t_2$. This implies that the claim holds.

Then we get that
$|\plzt f(x_1, x_2)|\leq \alpha$. As a consequence,
\begin{eqnarray}\label{term II}
{\rm II}&\le& \alpha^2 \inrp\dinrp \lf| Q^{(2)}_{t_2}(g)(x_1,x_2) \r|^2\dlmxtt\dmzo \\
&=& \alpha^2 \inrp\dinrp \lf| Q^{(2)}_{t_2}(1-g)(x_1,x_2) \r|^2\dlmxtt\dmzo \nonumber\\
&\ls& \alpha^2 \dinrp |1-
g(x_1,x_2)|^2 d\mu_\lz(x_1,x_2) \nonumber\\
&\ls &  \alpha^2 \mu_\lz(\{(x_1,x_2)\in\R_+\times \R_+:\,\,\cn_P(f)(x_1,x_2)>\alpha\}), \nonumber
\end{eqnarray}
where the second inequality follows from the $L^2$ boundedness of the Littlewood--Paley square function estimate, i.e.,  \eqref{Littlewood--Paley} of Theorem \ref{thm product Littlewood--Paley}.  For the term ${\rm III}$, symmetrically, we can obtain the same estimate as term ${\rm II}$.

For the term ${\rm IV}$, if $Q^{(1)}_{t_1} Q^{(2)}_{t_2}(g)(x_1,x_2)\not=0 $, then  there exist some $(w_1,w_2)$ such that $(w_1,w_2)\in
\{\cn_P(f)\leq \alpha\}$ and $|x_1-w_1|<t_1$ and $|x_2-w_2|<t_2$. Hence
$|u(t_1,t_2,x_1, x_2)|\leq \alpha$. Following the same routine of
(\ref{term II}), and using the $L^2$ boundedness of the product Littlewood--Paley square function estimate, i.e.,  \eqref{product Littlewood--Paley} of Theorem \ref{thm product Littlewood--Paley}, we have
\begin{eqnarray}\label{term IV}
{\rm IV}\ls \alpha^2\mu_\lz\lf(\lf\{(x_1,x_2)\in\R_+\times \R_+:\,\,\cn_P(f)(x_1,x_2)>\alpha\r\}\r). \nonumber
\end{eqnarray}

Combining the four terms above, we have
\begin{eqnarray}\label{part I}
&&\iint_{\mathcal A(\alpha)}S_u^2(f)(x_1,x_2)\,d\mu_\lz(x_1,x_2)\\
&&\quad\ls \alpha^2 \mu_\lz(\{(x_1,x_2)\in\R_+\times \R_+:\,\,\cn_P(f)(x_1,x_2)>\alpha\})\noz\\
&&\quad\quad+ \iint_{\{\cn_P(f)\leq
\alpha\}}|\cn_P(f)(x_1,\,x_2)|^2\dmzd.\noz
\end{eqnarray}

By the $L^2(\rlz)$-boundedness of the strong maximal function $\cm_S$, we have
\begin{align} \label{part II}
\mu_\lz\big(\R_+\times\R_+\setminus \mathcal A (\alpha)\big)
&\ls \iint_{\R_+\times\R_+} \lf[\cm_S(\chi_{\{\cn_P(f)>\alpha\}})(x_1,x_2)\r]^2  \,d\mu_\lz(x_1,x_2)\\
&\ls \iint_{\R_+\times\R_+}\lf[\chi_{\{\cn_P(f)>\alpha\}}(x_1,x_2)\r]^2 \,d\mu_\lz(x_1,x_2)\nonumber\\
&= \mu_\lz\big(\{(x_1,\,x_2)\in\R_+\times \R_+:\,\,\cn_P(f)(x_1,\,x_2)>\alpha\}\big).\nonumber
\end{align}

Combining (\ref{part I}) and (\ref{part II}), we have
\begin{eqnarray*}
&&\mu_\lz\Big(\{(x_1,x_2)\in\R_+\times \R_+:\ S_u(f)(x_1,x_2)>\alpha\}\Big)\\
&& \quad\leq \mu_\lz\Big( \{(x_1,x_2)\in\R_+\times \R_+:\
S_u(f)(x_1,x_2)>\alpha\} \cap  \mathcal A (\alpha)\Big) \\
&&\quad\quad+ \mu_\lz\Big( \{(x_1,x_2)\in\R_+\times \R_+:\
S_u(f)(x_1,x_2)>\alpha\} \setminus  \mathcal A (\alpha)\Big)\nonumber\\
& &\quad\ls  \mu_\lz\big(\{(x_1,\,x_2)\in\R_+\times \R_+:\,\,\cn_P(f)(x_1,\,x_2)>\alpha\}\big)\\
&&\quad\quad+ \frac1{\az^2}\iint_{\{\cn_P(f)\leq
\alpha\}}|\cn_P(f)(x_1,x_2)|^2\,\dmzd,
\end{eqnarray*}
which via a standard argument shows that $\|S_u(f)\|_\lpzd \ls \|\cn_P(f)\|_\lpzd$.
\end{proof}

\bigskip

{\bf Step 3: \ $\|f\|_{H^p_{\cn_P}( \rlz)}\ls
\|f\|_{H^p_{\mathcal R_P}(\rlz)}$} for $f\in H^p_{\mathcal R_P}(\rlz) \cap L^2(\rlz)$.

We now define the product grand maximal functions, borrowing an idea from \cite{YZ} in the one-parameter setting (see also \cite{gly1,gly2}).
\begin{defn}
Let $\beta_1,\beta_2,\gamma_1,\gamma_2\in(0,1]$. For any $f\in \big(\GGpone(\beta_1,\beta_2;\gamma_1,\gamma_2)\big)'$, we define the product grand maximal function as follows: For $(x_1,x_2)\in\R_+\times\R_+$,
\begin{align}
G_{\beta_1,\,\beta_2,\,\gamma_1,\,\gamma_2}(f)(x_1,x_2):= \sup\big\{ \langle f,\vz_1\vz_2\rangle: \ \|\vz_i\|_{\G(x_i,\,r_i,\,\beta_i,\,\gamma_i)}\leq 1, r_i>0,\ i=1,2 \big\}.
\end{align}
\end{defn}

By the definition of $\cn_{P}f$, we have
\begin{align*}
\cn_{P}f(x_1, x_2)=\sup_{\gfz{|y_1-x_1|< t_1}{|y_2-x_2|< t_2}}\!\lf| \int_{\R_+}\int_{\R_+} P_{t_1}^{[\lz]}(y_1, z_1)P_{t_2}^{[\lz]}(y_2, z_2)  f(z_1, z_2) d\mu_\lz(z_1,z_2)\r|.
\end{align*}
Next, for $i=1,2$, for any fixed $y_i,t_i\in\R_+$, the Poisson kernel $P_{t_i}^{[\lz]}(y_i, z_i)$, as a function of $z_i$, satisfies the conditions (${\rm K_{i}}$) and (${\rm K_{ii}}$) as in Section 2.3 (see \cite{yy}), and hence, it
is a test function of the type $(y_i,\,t_i,\,1,\,1)$, with  the norm $\big\|P_{t_i}^{[\lz]}(y_i, \cdot)\big\|_{\G(y_i,\,t_i,\,1,\,1)}=: C_\lz$, where $C_\lz$ is a positive constant depending only on $\lz$ (see Definition \ref{def-of-test-func-space} for the test function and its norm).
Hence, it is a test function of the type
$(y_i,\,t_i,\,\beta_i,\,\gamma_i)$ for every $\beta_i,\gamma_i\in(0,1]$ with the norm $ C_\lz$. 
Moreover, for any $x_i$ with $|x_i-y_i|<t_i$, we have that $\big\|P_{t_i}^{[\lz]}(y_i, \cdot)\big\|_{\G(x_i,\,t_i,\,\beta_i,\,\gamma_i)}\ls C_\lz$, where the implicit constant is independent of $x_i,\,t_i,\,\beta_i$ and $\gamma_i$.

Then, there exists a positive constant $\wz C_\lz$ such that
$$ \sup_{|y_1-x_1|< t_1} \lf\|P_{t_1}^{[\lz]}(y_1, \cdot)\r\|_{\G(x_1,\,t_1,\,\beta_1,\,\gamma_1)}=
\sup_{|y_2-x_2|< t_2}\lf\|P_{t_2}^{[\lz]}(y_2, \cdot)\r\|_{\G(x_2,\,t_2,\,\beta_2,\,\gamma_2)}=\wz C_\lz.$$
We then obtain that
\begin{align*}
\cn_{P}f(x_1, x_2)\ls G_{\beta_1,\,\beta_2,\,\gamma_1,\,\gamma_2}(f)(x_1,x_2).
\end{align*}


Next we claim that
\begin{align}\label{moser}
G_{\beta_1,\,\beta_2,\,\gamma_1,\,\gamma_2}(f)(x_1,x_2)\ls\big\{ \mathcal{M}_1\mathcal{M}_2( |\mathcal{R}_P(f)|^r )(x_1,x_2) \big\}^{1\over r}
\end{align}
for  any $r\in({2\lz+1\over 2\lz+2}, p)$ and $f\in H^p_{\mathcal R_P}(\rlz) \cap L^2(\rlz)$, where $\cm_1$ and $\cm_2$ are as in Section \ref{s2}.
This implies our Step 3.

To prove \eqref{moser}, we first prove the following inequality:
\begin{align}\label{moser 1}
 |\langle f,\psi_1\psi_2\rangle| &\ls \Bigg[ \mathcal{M}_1\bigg(  \mathcal{M}_2\Big(   \lf|\mathcal{R}_P(f)\r|^r  \Big)  \bigg)(x_1,x_2) \Bigg]^{1\over r}
 \end{align}
for any $r\in( {2\lz+1\over 2\lz+2}, p)$,  $f\in H^p_{\mathcal R_P}(\rlz) \cap L^2(\rlz)$,
 $\psi_1\in \GG(\beta_1,\gamma_1)$ with $\|\psi_1\|_{\G(x_1,\,2^{-\wz k_1},\,\beta_1,\,\gamma_1)}\leq 1$
 and $\psi_2\in \GG(\beta_2,\gamma_2)$ with $\|\psi_2\|_{\G(x_2,\,2^{-\wz k_2},\,\beta_2,\,\gamma_2)}\leq 1$.

To see this,  consider the following approximations to the identity: For each $k\in\mathbb Z$, define the operator
\begin{align}\label{Pk}
P_k:= P^{[\lz]}_{2^{-k}}
\end{align}
with the kernel
$
P_k(x,y):= P^{[\lz]}_{2^{-k}}(x,y).
$ 
Then, it is easy to see that
$$\lim_{k\to\infty} P_k = \lim_{k\to\infty}P^{[\lz]}_{2^{-k}}= Id\quad {\rm and }\quad \lim_{k\to-\infty} P_k = \lim_{k\to-\infty}P^{[\lz]}_{2^{-k}}= 0$$
in the sense of $L^2(\R_+,\dmz)$. Moreover, based on size and smoothness conditions of the Poisson kernel $P^{[\lz]}_t(x,y)$, it is direct that $P_k(x,y)$
satisfies the size and smoothness conditions as in $({\rm A_i})$, $({\rm A_{ii}})$ and $({\rm A_{iii}})$ in Definition \ref{def-ati} for $x,y$  with a certain positive constant $\overline C_\lz$.

Also, from ${\rm (S_{iii})}$ in Lemma \ref{l-seimgroup prop}, we have that for any $k\in\zz$ and $x\in\R_+$,
$$\int_{\R_+}P_k(x,y) \dmz(y)=\int_{\R_+}P_k(x,y) \dmz(x)=1.$$
Hence, $\{P_k\}_{k\in\mathbb Z}$ is an approximation to the identity as in Definition \ref{def-ati}. Then we set $Q_k:=P_k-P_{k-1}$ as the difference operator, and it is obvious that the kernel $Q_k(x,y)$ of $Q_k$ satisfies the same size and smoothness  conditions as $P_k(x,y)$ does, and
$$\int_{\R_+}Q_k(x,y) \dmz(y)=\int_{\R_+}Q_k(x,y) \dmz(x)=0.$$

Now to classify the action on different variables, for $i=1,2$, we let $\left\{P_{k_i}^{(i)}\right\}_{k_i\in\mathbb Z}$ be the approximation to the identity on the $i$th variable as defined above, and similarly let  $Q_{k_i}^{(i)}$ be the corresponding difference operator.

Then, following Theorem 2.9 in \cite{HLL2}, we now have the following Calder\'on's reproducing formula:
\begin{align}\label{reproducing Poisson}
f(x_1,x_2)&=   \sum_{k_1}\sum_{I_1 \in\mathscr{X}^{k_1+N_1}} \sum_{k_2}\sum_{I_2 \in\mathscr{X}^{k_2+N_2}}  m_\lambda(I_1)m_\lambda(I_2) \\
&\quad\quad\times\tilde{Q}^{(1)}_{k_1}(x_1,x_{I_1})\tilde{Q}^{(2)}_{k_2}(x_2,x_{I_2})Q^{(1)}_{k_1}Q^{(2)}_{k_2}(f)(x_{I_1},x_{I_2}),\noz
\end{align}
where the series converges in the sense of $ \big(\GGpone(\beta_1,\beta_2;\gamma_1,\gamma_2)\big)'$, and for $i:=1,2$, $\tilde{Q}^{(i)}_{k_i}$  satisfies the same size, smoothness and cancellation conditions as $Q^{(i)}_{k_i}$ does, $\mathscr{X}^{k}$ is as in Section \ref{s2},
and $x_{I_1}$, $x_{I_2}$ are arbitrary points in the dyadic intervals $I_1$ and $I_2$, respectively.

We now prove \eqref{moser 1}. To begin with, for any $f\in H^p_{\mathcal R_P}(\rlz) \cap L^2(\rlz)$ and  $\psi_1\in \GG(\beta_1,\gamma_1)$ and with $\|\psi_1\|_{\G(x_1,\,2^{-\wz k_1},\,\beta_1,\,\gamma_1)}\leq 1$ and $\psi_2\in \GG(\beta_2,\gamma_2)$ with $\|\psi_2\|_{\G(x_2,\,2^{-\wz k_1},\,\beta_2,\,\gamma_2)}\leq 1$, from \eqref{reproducing Poisson} we obtain that
\begin{align*}
\langle f, \psi_1\psi_2\rangle &=   \sum_{k_1}\sum_{I_1 \in\mathscr{X}^{k_1+N_1}} \sum_{k_2}\sum_{I_2 \in\mathscr{X}^{k_2+N_2}}  m_\lambda(I_1)m_\lambda(I_2) \\
&\quad\quad\times\tilde{Q}^{(1)}_{k_1}(\psi_1)(x_{I_1})\tilde{Q}^{(2)}_{k_2}(\psi_2)(x_{I_2})Q^{(1)}_{k_1}Q^{(2)}_{k_2}(f)(x_{I_1},x_{I_2}).\noz
\end{align*}
Next, recall again the following almost orthogonality estimate (see \eqref{eee2} in Section 2, and see also Lemma 2.11 in \cite{HLL2}):  For $\epsilon \in(0, 1)$,
\begin{align}\label{eeeeee2}
&\big|\tilde{Q}^{(1)}_{k_1}(\psi_1)(x_{I_1})\tilde{Q}^{(2)}_{k_2}(\psi_2)(x_{I_2})\big|\\
&\quad\ls \prod_{i=1}^2  2^{-|k_i-\wz k_i|\epsilon}\bigg(\frac {2^{-k_i}+2^{-\wz k_i}}{|x_i-x_{I_i}|+2^{-k_i}+2^{-\wz k_i}}\bigg)^\epsilon  \nonumber\\
&\quad\quad\times   \frac1{m_\lz(I(x_i, 2^{-k_i}+2^{-\wz k_i} ))+m_\lz(I(x_{I_i}, 2^{-k_i}+2^{-\wz k_i}))+m_\lz(I(x_i, |x_i-x_{I_i}|))}.\nonumber
\end{align}
For arbitrary dyadic intervals $I_1$ and $I_2$, we choose $x_{I_1}\in I_1$ and  $x_{I_2}\in I_2$ such that
$$ \lf|Q^{(1)}_{k_1}Q^{(2)}_{k_2}(g)(x_{I_1},x_{I_2})\r| \leq 2\inf_{z_1\in I_1,\, z_2\in I_2}\lf|Q^{(1)}_{k_1}Q^{(2)}_{k_2}(g)(z_{1},z_{2})\r|,$$
 which implies that
\begin{align*}
 \lf|Q^{(1)}_{k_1}Q^{(2)}_{k_2}(g)(x_{I_1},x_{I_2})\r| &\leq 2\inf_{z_1\in I_1,\, z_2\in I_2}\Big(\big|P^{(1)}_{k_1}P^{(2)}_{k_2}(g)(z_{1},z_{2})\big|+\big|P^{(1)}_{k_1-1}P^{(2)}_{k_2}(g)(z_{1},z_{2})\big|\\
& \quad\quad\quad\quad\quad\quad\quad\quad+\big|P^{(1)}_{k_1}P^{(2)}_{k_2-1}(g)(z_{1},z_{2})\big|+\big|P^{(1)}_{k_1-1}P^{(2)}_{k_2-1}(g)(z_{1},z_{2})\big|\Big)\\
&\leq 8\inf_{z_1\in I_1,\, z_2\in I_2}\mathcal{R}_P(f)(z_1,z_2).
\end{align*}

 Then, based on the estimates in the proof of Theorem 2.10 in \cite[pp.\, 335--336]{HLL2}, see also the estimates we had in Section 2.3 for  $\mathbb L$ in \eqref{eee1}, we have the following estimate:
\begin{align*}
|\langle f, \psi\rangle|
& \ls \sum_{k_1} \sum_{k_2} 2^{-|k_1-\wz k_1|\epsilon}2^{-|k_2-\wz k_2|\epsilon} 2^{[(k_1\wedge \wz k_1)-k_1](2\lz+1)(1-{1\over r})}2^{[(k_2\wedge \wz k_2)-k_2](2\lz+1)(1-{1\over r})}\\
&\quad
 \times\Bigg[ \mathcal{M}_1\bigg( \sum_{I_1 \in\mathscr{X}^{k_1+N_1}}  \mathcal{M}_2\Big( \sum_{I_2 \in\mathscr{X}^{k_2+N_2}}   \inf_{\gfz{z_1\in I_1}{z_2\in I_2}}\lf|\mathcal{R}_P(f)(z_1,z_2)\r|^r  \chi_{I_2}(\cdot)\Big)(x_2) \chi_{I_1}(\cdot) \bigg)(x_1) \Bigg]^{1\over r}\\
 &\ls
 \Bigg[ \mathcal{M}_1\bigg(  \mathcal{M}_2\Big(   \lf|\mathcal{R}_P(f)\r|^r  \Big)  \bigg)(x_1,x_2) \Bigg]^{1\over r},
\end{align*}
where ${2\lz+1\over2\lz+2}<r<p$ and $a\wedge b:= \min\{a,b\}$, which shows that  \eqref{moser 1} holds.

We now prove \eqref{moser}. For every $\vz:=\vz_1\vz_2$ with $\|\vz_i\|_{\G(x_i,\,t_i,\,\beta_i,\,\gamma_i)}\leq 1,\ i=1,2$,
let
$$\sigma_1:=\int_{\R_+}\vz_1(x_1)\dmz(x_1),\quad \sigma_2:=\int_{\R_+}\vz_2(x_2)\dmz(x_2).$$
It is obvious that $|\sigma_1|, |\sigma_2|\ls 1$ since $\vz_i\in\G(x_i,\,t_i,\,\beta_i,\,\gamma_i)$ for $i=1,2$.
We set
$$\psi_1(y_1):={1\over 1+\sigma_1\wz C_\lz}\lf[ \vz (y_1)- \sigma_1 P^{(1)}_{\wz k_1}(x_1,y_1) \r], \quad \psi_2(y_2):={1\over 1+\sigma_2\wz C_\lz}\lf[ \vz (y_2)- \sigma_2 P^{(2)}_{\wz k_2}(x_2,y_2) \r],$$
where $\wz k_i := \lfloor\log_2 t_i \rfloor+1$ for $i=1,2$. Then, we see that $\psi_1\in \G(x_1,\, 2^{-\wz k_1},\,\beta_1,\,\gamma_1)$ and $\psi_2\in \G(x_2,\, 2^{-\wz k_2},\,\beta_2,\,\gamma_2)$. Based on the normalisation
factor ${1\over 1+\sigma_1\wz C_\lz}$, we obtain that
$$ \|\psi_1\|_{\G(x_1,\, 2^{-\wz k_1},\,\beta_1,\,\gamma_1)}\leq1 \quad{\rm and}\quad \|\psi_2\|_{\G(x_2,\, 2^{-\wz k_2},\,\beta_2,\,\gamma_2)}\leq1.  $$

Moreover, we point out that
$$\int_{\R_+}\psi_1(y_1)\dmz(y_1)=0 \quad {\rm for\ all\ }\ x_1\in\R_+ $$
since $\int_{\R_+} P^{(1)}_{\wz k_1}(x_1,y_1) \dmz(y_1)=1 \quad {\rm for\ all\ } x_1\in\R_+$.  Similarly we have the cancellation property for $\psi_2(y_2)$. Hence, we further obtain that  $\psi_1\in \GG(\beta_1,\gamma_1)$ and $\psi_2\in \GG(\beta_2,\gamma_2)$.

Based on the definition of $\psi_1$ and $\psi_2$, we have
\begin{align*}
|\langle f, \vz\rangle|&=\bigg|\bigg\langle f, \lf[\sigma_1 P_{\wz k_1}(x_1,\cdot)+(1+\sigma_1\,\wz C_\lz)\psi_1(\cdot)\r]\lf[\sigma_2P_{\wz k_2}(x_2,\cdot)+(1+\sigma_2\,\wz C_\lz)\psi_2(\cdot)\r] \bigg\rangle\bigg|\\
&\leq \bigg|\bigg\langle f, \sigma_1\sigma_2P_{\wz k_1}(x_1,\cdot)P_{\wz k_2}(x_2,\cdot) \bigg\rangle\bigg|+\bigg|\bigg\langle f, (1+\sigma_1\,\wz C_\lz)\psi_1(\cdot)\sigma_2P_{\wz k_2}(x_2,\cdot) \bigg\rangle\bigg|\\
&\quad+\bigg|\bigg\langle f, \sigma_1P_{\wz k_1}(x_1,\cdot)(1+\sigma_2,\wz C_\lz)\psi_2(\cdot) \bigg\rangle\bigg|+\bigg|\bigg\langle f, (1+\sigma_1\,\wz C_\lz)\psi_1(\cdot)(1+\sigma_2\,\wz C_\lz)\psi_2(\cdot) \bigg\rangle\bigg|\\
&=:A_1+A_2+A_3+A_4.
\end{align*}
For the term $A_1$, from the definition of $\mathcal R_P(f)$ in Section 1, we get that
$$A_1\ls \mathcal R_P(f)(x_1,x_2) =\big\{ |\mathcal R_P(f)(x_1,x_2)|^r\big\}^{1\over r}\leq\Bigg[ \mathcal{M}_1\bigg(  \mathcal{M}_2\Big(   \lf|\mathcal{R}_P(f)\r|^r  \Big)  \bigg)(x_1,x_2) \Bigg]^{1\over r}$$
for any $r\in (0,1]$.
For the term $A_4$,
from \eqref{moser 1} we  obtain that
$$A_4\ls  \Bigg[ \mathcal{M}_1\bigg(  \mathcal{M}_2\Big(   \lf|\mathcal{R}_P(f)\r|^r  \Big)  \bigg)(x_1,x_2) \Bigg]^{1\over r}$$
for ${2\lz+1\over2\lz+2}<r<p$.  As for $A_2$, let $F_{x_2}(\cdot):=\langle f, P_{\wz k_2}(x_2,\cdot) \rangle$. Then we have
\begin{align*}
A_2&\sim\bigg|\Big\langle F_{x_2}(\cdot), (1+\sigma\,\wz C_\lz)\psi_1(\cdot) \Big\rangle\bigg|.
\end{align*}
Then, following the same approach above, by using the reproducing formula in terms of $Q^{(2)}_{k_2}$, the almost orthogonality estimate, we obtain that
\begin{align*}
 A_2 &\ls \Bigg[ \mathcal{M}_1\bigg(  \Big [\sup_{t_1>0}\lf|P_{t_1}^{[\lz]}(F_{x_2}(\cdot))\r|\Big]^r    \bigg)(x_1) \Bigg]^{1\over r}=
 \Bigg[ \mathcal{M}_1\bigg(  \Big [\sup_{t_1>0}\lf|P_{t_1}^{[\lz]}P_{2^{-\wz k_2}}^{[\lz]}(f)(\cdot,x_2)\r|\Big]^r    \bigg)(x_1) \Bigg]^{1\over r},
 \end{align*}
which is further bounded by
$$\Bigg[ \mathcal{M}_1\bigg(  \mathcal{M}_2\Big(   \lf|\mathcal{R}_P(f)\r|^r  \Big)  \bigg)(x_1,x_2) \Bigg]^{1\over r}.$$
Similarly, we obtain that $A_3$ satisfies the same estimates. Combining the estimates of $A_1$,  $A_2$, $A_3$ and $A_4$,
we obtain that \eqref{moser} holds.

\medskip
{\bf Step 4:}  $\|f\|_\hrp\ls \|f\|_\hrz$ for $f\in \hrz\cap L^2(\rlz)$.

\medskip
Indeed,  we recall the well-known subordination formula that for all $f\in L^2(\rlz)$,
\begin{equation*}
\plzo\plzt f(x_1, x_2)=\frac1{\pi}\dinzf
\frac {e^{-u_1}}{\sqrt{u_1}}\frac {e^{-u_2}}{\sqrt{u_2}}e^{-\frac{t_1^2}{4u_1} \Delta_\lz}e^{-\frac{t_2^2}{4u_2}\Delta_\lz}f(x_1, x_2)\,du_1du_2.
\end{equation*}
From this, it follows that
\begin{eqnarray*}
\crz_Pf(x_1, x_2)
&&\ls \supd\dinzf \frac {e^{-u_1}}{\sqrt{u_1}}\frac{e^{-u_2}}{\sqrt{u_2}}
\lf|e^{-\frac{t_1^2}{4u_1}\Delta_\lz}e^{-\frac{t_2^2}{4u_2}\Delta_\lz}f(x_1, x_2)\r|\,du_1du_2\\
&&\ls \crz_hf(x_1,x_2)\dinzf  \frac {e^{-u_1}}{\sqrt{u_1}}\frac {e^{-u_2}}{\sqrt{u_2}}\,du_1du_2\\
&&\ls \crz_hf(x_1,x_2),
\end{eqnarray*}
which further implies  that for all $f\in \hrz$,
$\|f\|_\hrp\ls \|f\|_\hrz.$

\smallskip
{\bf Step 5:} $\|f\|_\hrz\le \|f\|_\hnz$ for $f\in \hnz\cap L^2(\rlz)$.
\medskip

 Observe that for all $f\in \ltzd$, $\crz_h f\le \cn_h f$. Then we see that for all
$f\in \hnz$,
$$\|f\|_\hrz\le \|f\|_\hnz.$$

\smallskip
{\bf Step 6:}
$
\|f\|_\hnz\ls \|f\|_\hap$ 
  for $f\in \hap\cap L^2(\rlz)$.

We claim that it suffices to prove that there exists a constant $\delta>0$ such that for every $H^p$ rectangular atom $\alpha_R$ as in Definition \ref{def-of-p q atom}, and $\gz_1,\gz_2\ge 2$,
\begin{equation}\label{eqn:bdd of nontang maxi func on atom-1}
\int_{x_1\notin \gz_1 I}\inzf |\cn_h (\alpha_R)(x_1, x_2)|^p\,d\mu_\lz(x_1,x_2)\ls   [\mu_\lz(R)]^{1-\frac{p}{2}} \|\alpha_R\|_{L^2(\rlz)}^p \gz_1^{-p}
\end{equation}
and
\begin{equation}\label{eqn:bdd of nontang maxi func on atom-2}
\inzf\int_{x_2\notin \gz_2 J} |\cn_h (\alpha_R)(x_1, x_2)|^p\,d\mu_\lz(x_1,x_2)\ls  [\mu_\lz(R)]^{1-\frac{p}{2}} \|\alpha_R\|_{L^2(\rlz)}^p \gz_2^{-p}.
\end{equation}
In fact, if \eqref{eqn:bdd of nontang maxi func on atom-1} and \eqref{eqn:bdd of nontang maxi func on atom-2}
hold, then we can obtain that for every $H^p$ atom $a$, we have
\begin{equation}\label{eqn:bdd of nontang maxi func on atom}
\inzf\inzf |\cn_h(a)(x_1, x_2)|^p\,d\mu_\lz(x_1,x_2)\ls  1.
\end{equation}

To see this, suppose $a$ is supported in an open set $\Omega\subset \R_+\times \R_+$ with finite measure and
$a:=\sum_{R\in m(\Omega)}\az_R. $
We now define
\begin{align*}
&\widetilde{\Omega}:=\{ (x_1,x_2) \in  \R_+\times \R_+: \cms(\chi_\Omega)(x_1,x_2)>1/2 \}\\
&\widetilde{\widetilde{\Omega}}:=\{ (x_1,x_2) \in  \R_+\times \R_+: \cms(\chi_{\widetilde{\Omega}})(x_1,x_2)>1/2 \}.
\end{align*}
Moreover, for every $R:=I\times J \in m_1(\Omega)$ (see Section 2.1 for the definition of $m_1(\Omega)$), let $\tilde{I}$ be the largest dyadic interval containing $I$ such that $\tilde{R}:=\tilde{I}\times J\subset \widetilde{\Omega}$
and let $\tilde{J}$ be the largest dyadic interval containing $J$ such that $\tilde{\tilde{R}}:=\tilde{I}\times \tilde{J}\subset \widetilde{\widetilde{\Omega}}$. We now let $\gamma_1:={|\wz I|\over |I|}$ and $\gamma_2:={|\wz J|\over |J|}$.

Then we have
\begin{align*}
&\inzf\inzf |\cn_h(a)(x_1, x_2)|^p\,d\mu_\lz(x_1,x_2)\\
&\quad=\Bigg(\iint_{\cup_{R\in m(\Omega)} 10 \tilde{\tilde{R}} } + \iint_{ \big(\cup_{R\in m(\Omega)} 10 \tilde{\tilde{R}}\big)^c }\Bigg)  |\cn_h(a)(x_1, x_2)|^p\,d\mu_\lz(x_1,x_2)\\
&\quad:={\rm A_1}+{\rm A}_2.
\end{align*}
For the term ${\rm A}_1$, using H\"older's inequality and the $\ltzd$-boundedness of $\cn_h$, we have that
\begin{align*}
{\rm A}_1&\leq \Big[\mu_\lambda\big( \bigcup_{R\in m(\Omega)} 10 \tilde{\tilde{R}} \big)\Big]^{1-{p\over 2}} \Bigg(\iint_{\cup_{R\in m(\Omega)} 10 \tilde{\tilde{R}} }  |\cn_h(a)(x_1, x_2)|^2\,d\mu_\lz(x_1,x_2)\Bigg)^{p\over2}\\
&\ls \Big[\mu_\lambda\big( \Omega \big)\Big]^{1-{p\over 2}} \|a\|_{L^2(\rlz)}^p\ls \Big[\mu_\lambda\big( \Omega \big)\Big]^{1-{p\over 2}} \Big[ \mu_\lambda\big( \Omega \big)^{{1\over2}-{1\over p}}\Big]^p \ls 1.
\end{align*}
For the term ${\rm A}_2$, we have
\begin{align*}
{\rm A}_2&\leq  \sum_{R\in m(\Omega)} \iint_{ \big(10 \tilde{\tilde{R}}\big)^c }  |\cn_h(\az_R)(x_1, x_2)|^2\,d\mu_\lz(x_1,x_2)\\
&\leq \sum_{R\in m(\Omega)} \iint_{ (10\tilde{I})^c\times \R_+ }  |\cn_h(\az_R)(x_1, x_2)|^p\,d\mu_\lz(x_1,x_2)\\
&\quad+ \sum_{R\in m(\Omega)} \iint_{ \R_+\times (10\tilde{I})^c   }  |\cn_h(\az_R)(x_1, x_2)|^p\,d\mu_\lz(x_1,x_2)\\
&=: {\rm A}_{21}+{\rm A}_{22}.
\end{align*}
From \eqref{eqn:bdd of nontang maxi func on atom-1} and \eqref{eqn:bdd of nontang maxi func on atom-2}, we have
\begin{align*}
{\rm A}_{21}&\ls  \sum_{R\in m(\Omega)} \|\az_R\|_{L^2(\rlz)}^p\ \mu_\lambda(R)^{1-{p\over 2}} \left( { |I|\over |\tilde{I}|} \right)^{-p}
\end{align*}
and that
\begin{align*}
{\rm A}_{22}&\ls   \sum_{R\in m(\Omega)}\|\az_R\|_{L^2(\rlz)}^p\ \mu_\lz(R)^{1-{p\over 2}} \left( { |J|\over |\tilde{J}|} \right)^{-p}.
\end{align*}
As a consequence, using H\"older's inequality and Journ\'e's covering lemma (\cite{Jo}, \cite{P}, see also the version on spaces of homogeneous type in \cite{HLLin}) we get that
\begin{align*}
{\rm A}_2&\ls  \bigg(\sum_{R\in m(\Omega)} \|\az_R\|_{L^2(\rlz)}^2\bigg)^{p\over 2}  \bigg(\sum_{R\in m(\Omega)} \mu_\lz(R) \left( { |I|\over |\tilde{I}|} \right)^{-2p}  \bigg)^{1-{p\over 2}}\\
&\quad+  \bigg(\sum_{R\in m(\Omega)} \|\az_R\|_{L^2(\rlz)}^2\bigg)^{p\over 2}   \bigg(\sum_{R\in m(\Omega)} \mu_\lz(R) \left( { |J|\over |\tilde{J}|} \right)^{-2p}  \bigg)^{1-{p\over 2}}\\
&\ls \mu_\lz(\Omega)^{{p\over 2}-1}\ \mu_\lz(\Omega)^{1-{p\over 2}}\ls1.
\end{align*}
Combining the estimates of the two terms ${\rm A}_1$ and ${\rm A}_2$, we get that \eqref{eqn:bdd of nontang maxi func on atom} holds.

Now, based on \eqref{eqn:bdd of nontang maxi func on atom}, for every $f\in \hap$, we have that
$ f=\sum_{j}\lambda_j a_j$ with $\sum_j|\lambda_j|^p \sim \|f\|_{\hap}^p$.
Hence,
\begin{align*}
\iint_{\R_+\times \R_+} |\cn_h(f)(x_1, x_2)|^p\,d\mu_\lz(x_1,x_2) &\leq \sum_j|\lz_j|^p \iint_{\R_+\times \R_+} |\cn_h (a_j)(x_1, x_2)|^p\,d\mu_\lz(x_1,x_2)\\
&\ls\sum_j|\lz_j|^p \ls\|f\|_{\hap}^p.
\end{align*}

Thus, to prove {\bf Step 6}, it suffices to prove \eqref{eqn:bdd of nontang maxi func on atom-1} and \eqref{eqn:bdd of nontang maxi func on atom-2}.  By symmetry, we only prove \eqref{eqn:bdd of nontang maxi func on atom-1}.  To this end, we write
\begin{align*}
&\int_{x_1\notin \gz I}\inzf |\cn_h\az_R(x_1, x_2)|^p\,d\mu_\lz(x_1,x_2) \\
&\quad= \lf[\sum_{k=0}^\fz\int_{2^{k+1}\gz I\setminus 2^k\gz I}\int_{8J}
+\sum_{k=0}^\fz\int_{2^{k+1}\gz I\setminus 2^k\gz I}\int_{\R_+\setminus 8J}\r] |\cn_h\az_R(x_1, x_2)|^p\,\dmzd\\
&\quad=:{\rm F}_1+{\rm F}_2.
\end{align*}

For ${\rm F}_1$, since
$$\cn_h\az_R(x_1, x_2)\leq\sup_{|x_2-y_2|<t_2}W^{[\lz]}_{t_2} \Big(\sup_{|x_1-y_1|<t_1}|W^{[\lz]}_{t_1} \az_R(y_1,\cdot)|\Big)(y_2)$$  by H\"older's inequality and the $\ltz$-boundedness of $\sup_{|x_2-y_2|<t_2}|W^{[\lz]}_{t_2} f(y)|$, we have that
\begin{align*}
{\rm F}_1& \le  \sum_{k=0}^\fz[m_\lz(8J)]^{1-\frac{p}{2}} \int_{2^{k+1}\gz I\setminus 2^k\gz I}\lf[\int_{8J}\lf[\cn_h\az_R(x_1, x_2)\r]^2\dmzt\r]^{\frac p 2}\dmzo \\
& \lesssim  [m_\lz(J)]^{1-\frac{p}{2}} \sum_{k=0}^\fz\int_{2^{k+1}\gz I\setminus 2^k\gz I}\lf[\int_J\lf[\sup_{|x_1-y_1|<t_1}\lf|W^{[\lz]}_{t_1}\az_R(\cdot, x_2)(x_1)\r|\r]^2\dmzt\r]^{\frac p 2}\dmzo .
\end{align*}
Since for any fixed $x_2$, $\inzf \az_R(z_1, x_2)\dmz(z_1)=0$,  we conclude that
\begin{eqnarray*}
|W^{[\lz]}_{t_1}\az_R(\cdot, x_2)(x_1)|&=&\lf|\int_{I}\lf[\wlzo(y_1, z_1)-\wlzo(y_1, x^1_0)\r]\az_R(z_1, x_2)\dmz(z_1)\r|\\
&\ls&\int_{I}\frac{t_1 |z_1-x_0^1| }{m_\lz(I(x^1_0, |x_1-x^1_0|))(|x^1_0-y_1|+t_1)^2}|\az_R(z_1, x_2)|\dmz(z_1),
\end{eqnarray*}
where $x^1_0$ is the center of $I$, and the last inequality follows from the fact that $\wlzo(y_1, z_1)$ as a function of $z_1$
satisfies $({\rm K_{ii}})$ in Section 2.3 (see \cite{yy}). Thus, observing that
$|x_1-x^1_0|\le|x^1_0-y_1|+t_1,$
we have
\begin{eqnarray*}
\sup_{|x_1-y_1|<t_1}|W^{[\lz]}_{t_1}\az_R(\cdot, x_2)(x_1)|\ls\int_{I}\frac{|I|}{m_\lz(I(x^1_0, |x_1-x^1_0|))|x_1-x^1_0|}|\az_R(z_1, x_2)|\dmz(z_1),
\end{eqnarray*}
%
%
%
%
%
%
%
As a consequence, we obtain that
\begin{align*}
{\rm F}_1& \ls  [m_\lz(J)]^{1-\frac{p}{2}} [m_\lz(I)]^{\frac{p}{2}}\sum_{k=0}^\fz\int_{2^{k+1}\gz I\setminus 2^k\gz I}\frac{|I|^p}{|x_1-x^1_0|^p}\frac1{m_\lz(I(x^1_0, |x_1-x^1_0|))^p}\\
&\quad\times\lf[\int_{J}\int_{I}\lf[\az_R(z_1, x_2)\r]^2\dmz(z_1)\dmzt\r]^{\frac p 2}\dmzo \\
& \ls  [m_\lz(J)]^{1-\frac{p}{2}} [m_\lz(I)]^{\frac{p}{2}}\gamma^{-p}\sum_{k=0}^\fz\frac{m_\lz(2^{k+1}I)^{1-p}}{2^{kp}} \|\alpha_R\|_{L^2(\rlz)}^p
\\
& \ls  [m_\lz(J)]^{1-\frac{p}{2}} [m_\lz(I)]^{\frac{p}{2}}\gamma^{-p}\|\alpha_R\|_{L^2(\rlz)}^p m_\lz(I)^{1-p} \sum_{k=0}^\fz\frac{2^{(k+1)(2\lz+1)(1-p)}}{2^{kp}}
\\
&\ls [\mu_\lz(R)]^{1-\frac{p}{2}} \|\alpha_R\|_{L^2(\rlz)}^p \gz^{-p},
\end{align*}
where the last inequality follows from the condition that $p\in ({2\lz+1\over 2\lz+2},1]$.

For ${\rm F}_2$, let $x^2_0$ be the center of $J$. By the cancellation of $\az_R$ and the property  $({\rm K_{ii}})$ for  $\wlzo(y_1, z_1)$ and $\wlzt(y_2, z_2)$,  we also have
\begin{eqnarray*}
&&\cn_{h}\az_R(x_1, x_2)\\
&&\le\!\sup_{\gfz{|y_1-x_1|<t_1}{|y_2-x_2|<t_2}}\!
\int_{I}\!\int_{J}\lf|\wlzo(y_1, z_1)-\wlzo(y_1, x^1_0)\r|\lf|\wlzt(y_2, z_2)-\wlzt(y_2, x^2_0)\r||\az_R(z_1, z_2)|d\mu_\lz(z_1,z_2)\\
&&\ls\sup_{\gfz{|y_1-x_1|<t_1}{|y_2-x_2|<t_2}}\int_{I}\int_{J}\frac1{m_\lz(I(x^1_0, |x^1_0-x_1|))}\frac{t_1|I|}{(|x^1_0-y_1|+t_1)^2}\\
&&\quad\quad\quad\quad\quad\quad\quad\quad\times \frac1{m_\lz(I(x^2_0, |x^2_0-x_2|))}\frac{t_2|J|}{(|x^2_0-y_2|+t_2)^2}|\az_R(z_1, z_2)|d\mu_\lz(z_1,z_2)\\
&&\ls [\mu_\lz(R)]^{\frac{1}{2}}\frac{|I|}{|x_1-x^1_0|}\frac1{m_\lz(I(x^1_0, |x_1-x^1_0|))}
\frac{|J|}{|x_2-x^2_0|} \frac1{m_\lz(I(x^2_0, |x_2-x^2_0|))}\|\az_R\|_{L^2(\rlz)}\\
&&\ls\frac{|I|}{|x_1-x^1_0|}\frac1{m_\lz(I(x^1_0, |x_1-x^1_0|))}
\frac{|J|}{|x_2-x^2_0|}\frac1{m_\lz(I(x^2_0, |x_2-x^2_0|))} [\mu_\lz(R)]^{\frac{1}{2}}\|\az_R\|_{L^2(\rlz)}.
\end{eqnarray*}
Therefore,
\begin{align*}
{\rm F}_2&\ls [\mu_\lz(R)]^{\frac{p}{2}}\|\az_R\|_{L^2(\rlz)}^p\sum_{k=0}^\fz\sum_{l=3}^\fz\int_{2^{k+1}\gz I\setminus 2^k\gz I}\int_{2^{l+1}J\setminus 2^lJ}\frac{|I|^p}{|x_1-x^1_0|^p}\frac1{m_\lz(I(x^1_0, |x_1-x^1_0|))^p}\\
&\quad\times\frac{|J|^p}{|x_2-x^2_0|^p}\frac1{m_\lz(I(x^2_0, |x_2-x^2_0|))^p}\dmzt\dmzo \\
&\ls [\mu_\lz(R)]^{1-\frac{p}{2}} \|\alpha_R\|_{L^2(\rlz)}^p \gz^{-p}.
\end{align*}
Combining the estimates of ${\rm F}_1$ and ${\rm F}_2$, we obtain \eqref{eqn:bdd of nontang maxi func on atom-1}.
%
We finish the proof of Theorem \ref{thm: char of Hardy spacs by max func}.

\section{Proofs of Theorems \ref{thm H1 riesz characterization} and \ref{thm Hp Riesz characterization}}
\label{sec:third theorem}

In this section, we present the proofs of Theorems \ref{thm H1 riesz characterization} and \ref{thm Hp Riesz characterization}.
We  first note that $\rizo\rizt$ is a product Calder\'on--Zygmund operator on space of homogeneous type $\rlz$ (see the definition in Section 1 of \cite{HLLin}). And we consider
$\rizo $ as $\rizo\otimes Id_2$ and $\rizt$ as $Id_1\otimes\rizt$, where we use $Id_1$ and $Id_2$ to denote the identity operator on $L^2(\R_+,dm_\lz)$. Then we can also understand $\rizo$ and $\rizt$ as product Calder\'on--Zygmund operators on $\rlz$. We recall that the product Calder\'on--Zygmund operators $T$
are bounded on $\lrzd$ for $r\in (1, \fz)$, on $H^p(\rlz)$ (\cite[Section 3.1]{HLLin}) and from $H^p(\rlz)$ to $L^p(\rlz)$ for $p\in((2\lz+1)/(2\lz+2),1]$ (\cite[Section 2.1.3]{HLLin}). Hence, for any
$f\in H^p_{\Delta_\lz}(\rlz)$, we have
\begin{eqnarray}\label{restr infty hp lp norm}
&&\lf\|\rizo\lf(f\r)\r\|_\lpzd+\lf\|\rizt\lf(f\r)\r\|_\lpzd+\lf\|\rizo\rizt\lf(f\r)\r\|_\lpzd\ls \|f\|_{H^p_{\Delta_\lz}(\rlz)}.
\end{eqnarray}

Before we give the proof of Theorem \ref{thm H1 riesz characterization}, we first recall the following result from Lemma 11 in \cite{ms}.

\begin{lem}\label{lem: subharmonic func}
Suppose that
\begin{itemize}
\item[(i)] $u(t, x)$ is continuous in $t\in[0, \fz)$, $x\in\rr$ and even in $x$;

\item[(ii)] In the region where $u(t, x)>0$, $u$ is of class $C^2$ and satisfies
$\partial_t^2u+\partial_x^2u
+2\lz x^{-1}\partial_x u\ge0$;

\item[(iii)] $u(0, x)=0;$

\item[(iv)] For some $r\in[1, \fz)$, there exists a positive constant $\wz C$ such that
\begin{equation*}
\sup_{0<t<\fz}\dint_0^\fz|u(t, x)|^r\,dm_\lz(x)\le \wz C<\fz.
\end{equation*}
\end{itemize}
Then $u(t, x)\le0$.
\end{lem}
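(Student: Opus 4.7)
The plan is to reduce the problem to showing that the non-negative envelope $v := \max(u, 0)$ vanishes identically, and then to combine a submean-value decay estimate with a Phragm\'en--Lindel\"of barrier argument. Since $u$ is even in $x$, so is $v$, and $v(0, x) = 0$ by hypothesis (iii).

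First I would establish that $v$ is Bessel-subharmonic in the distributional sense on $\R_+ \times \R_+$. On the open set $\{v > 0\} = \{u > 0\}$ this is immediate from hypothesis (ii). Across the boundary $\partial\{u > 0\}$, I would test against a non-negative smooth $\phi$ compactly supported in $\R_+ \times \R_+$ and integrate by parts: using that $u = 0$ on $\partial\{u > 0\}$ (continuity) and that $u$ decreases to zero as one exits $\{u > 0\}$ (so the outward normal derivative satisfies $-\partial_n u \ge 0$), one obtains
\[
\iint v \, \triangle_{t,x}\phi \, x^{2\lambda}\, dx\,dt \;\ge\; \iint_{\{u>0\}} \phi \, \triangle_{t,x} u \, x^{2\lambda}\, dx\,dt \;\ge\; 0 ,
\]
which is the distributional subharmonicity of $v$.

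Next I would derive a pointwise decay estimate from hypothesis (iv). By the submean-value inequality for Bessel-subharmonic functions on the ball $B_R(t_0, x_0)$ (intrinsic volume $\sim R^{2\lambda+2}$) combined with H\"older's inequality in $x$ and integration in $t$,
\[
v(t_0, x_0) \;\le\; \frac{C_\lambda}{V(B_R)} \iint_{B_R(t_0,x_0)} v \, x^{2\lambda}\, dx\, dt
\;\le\; C \, R^{-(2\lambda+1)/r} \widetilde{C}^{1/r}.
\]
Choosing $R = t_0/2$ (to keep the ball inside $\R_+ \times \R_+$) yields $v(t_0, x_0) \le C \, t_0^{-(2\lambda+1)/r}$, uniformly in $x_0$.

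The final step is a Phragm\'en--Lindel\"of barrier. Since $t$ is Bessel-harmonic ($\triangle_{t,x} t = 0$), for each $\varepsilon > 0$ the function $w_\varepsilon(t,x) := v(t,x) - \varepsilon t$ is still subharmonic in the sense above, with $w_\varepsilon(0, x) = 0$, while the decay estimate of Step 2 gives $w_\varepsilon(t, x) \le C t^{-(2\lambda+1)/r} - \varepsilon t \to -\infty$ as $t \to \infty$. Applying the maximum principle for subharmonic functions on the half-plane $\R_+ \times \R_+$ (taking exhausting rectangles and letting their size tend to infinity), one concludes $\sup w_\varepsilon \le 0$, i.e.\ $v(t,x) \le \varepsilon t$. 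Letting $\varepsilon \to 0$ and using $v \ge 0$ gives $v \equiv 0$, hence $u \le 0$ as claimed.

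The main obstacle I expect is the justification of distributional subharmonicity of $u^+$ in Step 1 across the (possibly irregular) set $\partial\{u > 0\}$ in the Bessel measure, together with making the submean-value decay in Step 2 rigorous for Bessel-subharmonic functions; the latter is cleanest when $2\lambda + 1 \in \N$ via identification with axially-symmetric subharmonic functions on $\R^{2\lambda+2}$, with a standard extension argument handling the general $\lambda > 0$ case.
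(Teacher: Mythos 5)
First, note that the paper does not prove this lemma at all: it is quoted verbatim as Lemma 11 of Muckenhoupt--Stein \cite{ms}, so there is no in-paper argument to compare yours against. Judged on its own, your proof lives in the right circle of ideas (subharmonicity of $u^{+}$, a sub-mean-value/$L^{r}$ decay estimate, a comparison argument), and Step 2's conclusion $u^{+}(t_0,x_0)\le C\,t_0^{-(2\lambda+1)/r}$ uniformly in $x_0$ is correct (though $V(B_R)\sim R^{2\lambda+2}$ is only a lower bound in general; when $|x_0|\gg R$ one must use the localized ball volume $\sim R^{2}|x_0|^{2\lambda}$, which still yields the stated bound). The genuine gap is in Step 3. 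The maximum principle on an exhausting rectangle $[0,T]\times[-L,L]$ controls $w_\varepsilon$ only through its values on the entire boundary, and you have no admissible bound on the lateral sides $\{x=\pm L,\ 0<t\le T\}$: the decay estimate $v(t,x)\le C\,t^{-(2\lambda+1)/r}$ blows up as $t\to 0$, the continuity $v(0,x)=0$ is not uniform in $x$, and in fact nothing you have established even shows that $v$ is bounded near $t=0$. Consequently $\limsup_{L\to\infty}\max_{\text{lateral}}w_\varepsilon\le 0$ is unproven, and the conclusion $\sup w_\varepsilon\le 0$ does not follow as written. (Step 1 also needs repair: integrating by parts over $\partial\{u>0\}$ presumes regularity of that set and of the normal derivative; the correct route is the local sub-mean-value characterization --- at points where $u\le 0$ one has $u^{+}=0\le$ average of $u^{+}$, and at points where $u>0$ one is inside the open set where (ii) holds --- which requires no boundary regularity. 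You flag this yourself, so it is a fixable presentation issue rather than a conceptual one.)

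The cleanest repair removes the Phragm\'en--Lindel\"of step altogether. Extend $v=u^{+}$ by zero to $t<0$. The extension is subharmonic on all of $\mathbb{R}\times\mathbb{R}$ (in the Bessel/axially-symmetric sense): at any point with $t\le 0$ the extension vanishes while its averages are nonnegative, and at points with $t>0$ small balls lie in the region already handled; here hypothesis (iii) is exactly what makes the gluing work. Subharmonicity then gives the sub-mean-value inequality over \emph{all} balls, not just small ones, so for a fixed $(t_0,x_0)$ with $t_0>0$ you may let $R\to\infty$ in your Step 2 computation (eventually $R\gg |x_0|+t_0$, so the volume really is $\sim R^{2\lambda+2}$ and hypothesis (iv) bounds the $t$-slices), obtaining $v(t_0,x_0)\le C\,\widetilde{C}^{1/r}R^{-(2\lambda+1)/r}\to 0$. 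Hence $v\equiv 0$ and $u\le 0$, with no barrier and no exhaustion by rectangles needed. The remaining technical debt is then exactly the one you identify: the sub-mean-value inequality for $\lambda$-subharmonic functions of an even variable, which for non-integer $2\lambda+1$ must be taken from the Weinstein--Huber--Muckenhoupt--Stein theory of generalized axially symmetric potentials rather than from the literal identification with $\mathbb{R}^{2\lambda+2}$.
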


\medskip

%

For $f\in\lpzd$ with $p\in [1, \fz)$, and $t_1,\,t_2,\,x_1,\,x_2\in\R_+$, let
\begin{equation}\label{real and imaginary parts-1}
u(t_1,\,t_2,\,x_1,\,x_2):=\plzo\plzt f(x_1,x_2),\,\,\,\,v(t_1,\,t_2,\,x_1,\,x_2):=\qlzo\plzt f(x_1,x_2),
\end{equation}
and
\begin{equation}\label{real and imaginary parts-2}
w(t_1,\,t_2,\,x_1,\,x_2):=\plzo\qlzt f(x_1,x_2),\,\,\,\,z(t_1,\,t_2,\,x_1,\,x_2):=\qlzo\qlzt f(x_1,x_2),
\end{equation}
where $\qlzo$ and $\qlzt$ are defined as in \eqref{conj poi defn}.
Moreover, define
\begin{equation}\label{radi maxi defn}
u^\ast(x_1, x_2):=\crz_{P}f(x_1, x_2)
\end{equation}
%
and
\begin{align}\label{sum of real and imagi parts}
F(t_1,\,t_2,\,x_1,\,x_2)&:=\lf\{[u(t_1,\,t_2,\,x_1,\,x_2)]^2+[v(t_1,\,t_2,\,x_1,\,x_2)]^2\r.\\
&\quad\quad+\lf.[w(t_1,\,t_2,\,x_1,\,x_2)]^2+[z(t_1,\,t_2,\,x_1,\,x_2)]^2\r\}^{\frac12}.\noz
\end{align}
We first establish the following lemma.

\begin{lem}\label{lem: F controled by riesz}
Let $f\in \horiz$, $u$,  $v$, $w$, $z$ and $F$ be, respectively, as in \eqref{real and imaginary parts-1},
\eqref{real and imaginary parts-2} and \eqref{sum of real and imagi parts}. Then there exists a positive constant $C$
independent of $f$, $u$,  $v$, $w$, $z$ and $F$, such that
\begin{eqnarray*}
&&\supd\dinrp F(t_1,\,t_2,\,x_1,\,x_2)\dmzd\le C  \|f\|_\horiz.
\end{eqnarray*}
\end{lem}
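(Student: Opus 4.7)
The plan is to prove that $F$ is $\lz$-subharmonic in each of the two coordinate pairs $(t_1,x_1)$ and $(t_2,x_2)$ separately, iterate Lemma~\ref{lem: subharmonic func} in each variable to majorise $F(t_1,t_2,x_1,x_2)$ by the double Poisson integral $\plzo\plzt$ applied to the boundary value $F(0,0,\cdot,\cdot)$, and then invoke the $L^1(\rlz)$-contractivity of the Poisson semigroups (Lemma~\ref{l-seimgroup prop} ${\rm (S_i)}$) to convert this pointwise estimate into the desired uniform integral bound by $\|f\|_\horiz$.

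For the subharmonicity step, set $\Psi:=u^2+v^2+w^2+z^2=F^2$. Since each of $u,v,w,z$ is $\lz$-harmonic in $(t_1,x_1)$ for fixed $(t_2,x_2)$ (by \eqref{bessel laplace equation} applied to both Poisson and conjugate-Poisson integrals), the Leibniz rule gives
\begin{align*}
\triangle_{t_1,\,x_1}\Psi=2\sum_{g\in\{u,v,w,z\}}\bigl|\gratxo g\bigr|^2,
\end{align*}
and componentwise Cauchy--Schwarz applied to $\pa_{t_1}\Psi=2\sum_g g\,\pa_{t_1}g$ and $\pa_{x_1}\Psi=2\sum_g g\,\pa_{x_1}g$ yields $|\gratxo\Psi|^2\le 2\Psi\,\triangle_{t_1,\,x_1}\Psi$. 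The chain rule applied to $F=\Psi^{1/2}$ then produces
\begin{align*}
\triangle_{t_1,\,x_1}F=\frac{2\Psi\,\triangle_{t_1,\,x_1}\Psi-|\gratxo\Psi|^2}{4\Psi^{3/2}}\ge 0
\end{align*}
wherever $F>0$, and by symmetry $\triangle_{t_2,\,x_2}F\ge 0$ as well. It is worth remarking that this exponent-$1$ subharmonicity uses only the $\lz$-harmonicity of each of the four components, and \emph{not} the Cauchy--Riemann relations \eqref{CR}; the latter will enter only when identifying boundary limits.

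I then apply Lemma~\ref{lem: subharmonic func} to $\widetilde F(t_1,x_1):=F(t_1,t_2,x_1,x_2)-\plzo[F(0,t_2,\cdot,x_2)](x_1)$ for fixed $(t_2,x_2)$, which is $\lz$-subharmonic in $(t_1,x_1)$, vanishes at $t_1=0$, and has a uniform (in $t_1$) $L^1(\dmz(x_1))$ bound inherited from the hypotheses $f,\rizo f,\rizt f,\rizo\rizt f\in L^1(\rlz)$ via the $L^1$-contractivity of $\plzo$. This gives $F(t_1,t_2,x_1,x_2)\le\plzo[F(0,t_2,\cdot,x_2)](x_1)$. Using $\rizo f=\lim_{t\to 0}\qlz f$ from \eqref{riz} together with the commutativity of $\plzt$ with operators acting on the first variable, the four boundary values at $t_1=0$ identify as $\plzt f,\plzt\rizo f,\plzt\rizt f,\plzt\rizo\rizt f$, each of which is again $\lz$-harmonic in $(t_2,x_2)$. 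Repeating the subharmonicity computation in $(t_2,x_2)$ and applying Lemma~\ref{lem: subharmonic func} a second time pushes the majorisation down to $t_2=0$, yielding the pointwise estimate
\begin{align*}
F(t_1,t_2,x_1,x_2)\le\plzo\plzt\!\Big[\bigl(f^2+(\rizo f)^2+(\rizt f)^2+(\rizo\rizt f)^2\bigr)^{1/2}\Big](x_1,x_2).
\end{align*}
Integrating in $\dmzd$, invoking Lemma~\ref{l-seimgroup prop} ${\rm (S_i)}$, and dominating the square root by the sum of the four terms then finishes the proof.

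The principal technical obstacle will be the careful verification of the hypotheses of Lemma~\ref{lem: subharmonic func} at each of the two applications---in particular the uniform $L^r$ growth (taken at $r=1$). This is exactly where the Riesz-transform integrability in the definition of $\horiz$ is used essentially, since the $L^1$-contractivity of the Poisson semigroups reduces the growth bound all the way to $\|f\|_{L^1(\rlz)}+\|\rizo f\|_{L^1(\rlz)}+\|\rizt f\|_{L^1(\rlz)}+\|\rizo\rizt f\|_{L^1(\rlz)}=\|f\|_\horiz$. Secondary technicalities include verifying continuity up to the boundaries $t_i=0$, the even-in-spatial-variable extension demanded by Lemma~\ref{lem: subharmonic func}, and justifying the limit-integral interchanges needed to recognise the boundary values of the conjugate Poisson integrals as Bessel Riesz transforms of $f$.
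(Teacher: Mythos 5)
Your plan has a genuine gap at the point you yourself identify as the crux: the verification of hypothesis (iv) of Lemma \ref{lem: subharmonic func} at $r=1$. You assert that the uniform $L^1$ bound for $F(t_1,t_2,\cdot,\cdot)$ is ``inherited \ldots via the $L^1$-contractivity of $\plzo$''. Contractivity of the Poisson semigroup controls only the terms built from $\plzo,\plzt$; it says nothing about $v=\qlzo\plzt f$ and $z=\qlzo\qlzt f$, because the conjugate Poisson operator $\qlz$ is not an $L^1$-contraction and is not dominated by $\plz$. The estimate $\sup_{t}\|\qlz g\|_{\loz}\ls\|\riz g\|_{\loz}$ is precisely the nontrivial content of the lemma you are trying to prove; assuming it while checking (iv) makes the argument circular, and without it the maximum-principle machinery cannot start. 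The paper's proof consists of exactly this estimate and nothing more: since $F\le|u|+|v|+|w|+|z|$, it bounds the four $\lozd$-norms separately, using ${\rm (S_i)}$ of Lemma \ref{l-seimgroup prop} for $u$ and, for the conjugate terms, the representation $\qlz f(x)=\int_0^\fz xy\,P^{[\lz+1]}_t(x,y)\,\riz f(y)\,\dmz(y)$ from \cite{bdt} together with $\int_0^\fz xy\,P^{[\lz+1]}_t(x,y)\,\dmz(x)\ls1$, which give $\|\qlz f\|_{\loz}\ls\|\riz f\|_{\loz}$ uniformly in $t$ (iterated in both variables for $z$). No subharmonicity, maximum principle, or boundary values are needed.

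Two further steps in your outline are incorrect as written. First, $v$ and $z$ are not $\lz$-harmonic in $(t_1,x_1)$: the Cauchy--Riemann system \eqref{CR} yields $\partial_{t_1}^2v+\partial_{x_1}^2v+\frac{2\lz}{x_1}\partial_{x_1}v=\frac{2\lz}{x_1^2}\,v$, so your Leibniz identity for $\triangle_{t_1,\,x_1}\Psi$ fails, and the remark that the Cauchy--Riemann relations are not needed is backwards: they are what identifies the zeroth-order term (which happens to be nonnegative, so exponent-one subharmonicity can be salvaged, but only after this correction). Second, the intermediate boundary values are misidentified: as $t_1\to0$ one gets $w\to\qlzt f$ and $z\to\rizo\qlzt f$, not $\plzt\rizt f$ and $\plzt\rizo\rizt f$; in the Bessel setting $\qlzt g\neq\plzt\rizt g$, and this failure is the very reason the $P^{[\lz+1]}$-kernel representation is needed. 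Finally, Lemma \ref{lem: subharmonic func} demands continuity on the closed half-plane and vanishing at $t_1=0$, which for general $f\in\horiz$ cannot be arranged with literal boundary values of Poisson and conjugate Poisson integrals; this is why the related arguments in the paper (the proofs of Theorems \ref{thm H1 riesz characterization} and \ref{thm Hp Riesz characterization}, which do use Lemma \ref{lem: subharmonic func}, and use the present lemma as input for hypothesis (iv)) work with the shifted functions $F(\ez_1+t_1,\ez_2+t_2,\cdot,\cdot)$ rather than with $t_i=0$, a device your proposal would also need.
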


\begin{proof}
It suffices to show that
\begin{eqnarray}\label{F controled by riesz-u}
\supd\dinrp|u(t_1,\,t_2,\,x_1,\,x_2)|\dmzd\le \|f\|_\lozd,
\end{eqnarray}
\begin{eqnarray}\label{F controled by riesz-v}
\supd\dinrp|v(t_1,\,t_2,\,x_1,\,x_2)|\dmzd\ls \|\rizo f\|_\lozd,
\end{eqnarray}
\begin{eqnarray}\label{F controled by riesz-w}
\supd\dinrp|w(t_1,\,t_2,\,x_1,\,x_2)|\dmzd\ls \|\rizt f\|_\lozd,
\end{eqnarray}
and
\begin{eqnarray}\label{F controled by riesz-z}
\supd\dinrp|z(t_1,\,t_2,\,x_1,\,x_2)|\dmzd\ls \|\rizo\rizt f\|_\lozd.
\end{eqnarray}

To this end, we first note that \eqref{F controled by riesz-u} follows from ${\rm (S_{i})}$ in Lemma \ref{l-seimgroup prop}.
Moreover, 
by the fact that for any $t,\,y\in\R_+$,
\begin{equation}\label{upp bdd for poiss inte}
\inzf xy P^{[\lz+1]}_t(x, y)\dmz(x)\,\ls 1,
\end{equation}
we obtain that for every $f\in\loz$ and $t\in \R_+$,
\begin{eqnarray}\label{conjuga for poiss inte}
\lf\|\qlz f\r\|_\loz&=&\lf\|\inzf \cdot yP^{[\lz+1]}_t(\cdot, y)\riz (f)(y)\,\dmz(y)\r\|_\loz\\
&\ls&\|\riz f\|_\loz;\noz
\end{eqnarray}
see \cite[p.\, 208]{bdt}. Therefore, by the uniform $\loz$-boundedness of $\left\{\plz\right\}_{t>0}$(Lemma \ref{l-seimgroup prop} ${\rm (S_i)}$),
we see that for any $t_1$, $t_2\in R_+$,
\begin{eqnarray*}
\dinrp|v(t_1,\,t_2,\,x_1,\,x_2)|\dmzd\!
\le\! \dinrp\lf|\qlzo f(x_1,\,x_2)\r|\dmzd\ls  \|\rizo f\|_\lozd.
\end{eqnarray*}
This implies \eqref{F controled by riesz-v}. Similarly, we have \eqref{F controled by riesz-w}.

Finally, from \eqref{conjuga for poiss inte}, we deduce that
\begin{eqnarray*}
z(t_1,\,t_2,\,x_1,\,x_2)=\inzf\inzf x_1y_1P^{[\lz+1]}_{t_1}(x_1, y_1)x_2y_2 P^{[\lz+1]}_{t_2}(x_2, y_2)\rizo\rizt f(y_1, y_2)\,d\mu_\lz(y_1,y_2).
\end{eqnarray*}
By this and \eqref{upp bdd for poiss inte}, we show \eqref{F controled by riesz-z} immediately.
This finishes the proof of Lemma \ref{lem: F controled by riesz}.
\end{proof}

\smallskip
\begin{proof}[{\bf Proof of Theorem \ref{thm H1 riesz characterization}}]

We first show that for any $f\in H^1_{\Delta_\lz}(\rlz)$,
\begin{align*}
\|f\|_{H^1_{Riesz}(\rlz)}\ls \|f\|_{H^1_{\Delta_\lz}(\rlz)}.
\end{align*}
To see this, based on Definition \ref{def of Hardy space via riesz} and Theorem \ref{thm H1 and classical H1}, it suffices to prove that
\begin{align*}
\|f\|_\lozd+\|\rizo f\|_\lozd+ \|\rizt f\|_\lozd+ \|\rizo\rizt f\|_\lozd \ls \|f\|_{H^1_{\Delta_\lz}(\rlz)},
\end{align*}
which follows from \eqref{restr infty hp lp norm} with $p:=1$ and the fact that $H^1_{\Delta_\lz}(\rlz)$ is a subspace of $L^1(\rlz)$.

Conversely, assume that $f\in H^1_{Riesz}(\rlz)$.
By Theorem \ref{thm: char of Hardy spacs by max func}, it suffices to show that
\begin{align*}
\|f\|_{H^1_{\mathcal{R}_{P}}(\rlz)}\ls \|f\|_{H^1_{Riesz}(\rlz)}.
\end{align*}
To this end, based on Lemma \ref{lem: F controled by riesz}, it remains to prove that
\begin{equation}\label{radia max cont by holomo fun}
\|f\|_{H^1_{\mathcal{R}_{P}}(\rlz)}=\|u^\ast\|_\lozd\ls{\displaystyle\supd\dinrp} F(t_1,\,t_2,\,x_1,\,x_2)\,\dmzo\dmzt,
\end{equation}
where $u^\ast$ and $F$ are as in \eqref{radi maxi defn} and \eqref{sum of real and imagi parts}. 
We first claim that  we only need to show that for $p\in \big(\frac{2\lz+1}{2\lz+2}, 1\big)$ and $\ez_1,\,t_1,\,\ez_2,\, t_2,\,x_1,\,x_2\in\R_+$,
\begin{equation}\label{F subharmonic}
F^p(\ez_1+t_1,\,\ez_2+ t_2,\,x_1,\,x_2)\ls \plzo\plzt\lf(F^p(\ez_1,\,\ez_2,\,\cdot,\,\cdot)\r)(x_1,x_2).
\end{equation}
Indeed, by Lemma \ref{lem: F controled by riesz}, we see that $F\in \lozd$. If \eqref{F subharmonic} holds, then
   the uniform $\lrz$-boundedness of $\left\{\plz\right\}_{t>0}$, with $r:=1/p$, implies that $\{F^p(\ez_1,\,\ez_2,\,\cdot,\,\cdot)\}_{\ez_1,\,\ez_2>0}$ is bounded in $L^r(\rlz)$. Since $\lrzd$  is reflexive, there exist two  sequences $\{\ez_{1,\,k}\}$, $\{\ez_{2,\,j}\}\downarrow0$ and
 $h\in L^r(\rlz)$ such that $\{F^p(\ez_{1,\,k},\,\ez_{2,\,j},\,\cdot,\,\cdot)\}_{\ez_{1,\,k},\,\ez_{2,\,j}>0}$ converges weakly to $h$
 in $L^r(\rlz)$ as $k,\,j\to\fz$. Moreover, by H\"older's inequality, we see that
\begin{align}\label{norm of weak limit}
\|h\|_{L^r(\rlz)}^r
&=\bigg\{\dsup_{\|g\|_{L^{r'}(\rlz)}\le1}\lf|\dinrp g(x_1,x_2)h(x_1,x_2)\,d\mu_\lz(x_1,x_2)\r|\bigg\}^r\\
&=\bigg\{\dsup_{\|g\|_{L^{r'}(\rlz)}\le1}\lim_{\gfz{k\to\fz}{j\to\fz}}
\lf|{\displaystyle\dinrp} g(x_1, x_2)F^p(\ez_{1,\,k},\,\ez_{2,\,j},\,\cdot,\,\cdot)\,d\mu_\lz(x_1,x_2)\r|\bigg\}^r\noz\\
&\le \dlimsup_{\gfz{k\to\fz}{j\to\fz}}\big\|F^p(\ez_{1,\,k},\,\ez_{2,\,j},\,\cdot,\,\cdot)\big\|_{L^r(\rlz)}^r\noz\\
&\le\sup_{t_1>0,\,t_2>0}\dinrp F(t_1,\,t_2,\,x_1,\,x_2)\,d\mu_\lz(x_1,x_2).\noz
\end{align}

Since $F$ is continuous in $t_1$ and $t_2$, for any $x_1,\,x_2\in\R_+$,
$$F^p(t_1+\ez_{1,\,k}, t_2+\ez_{2,\,j}, x_1, x_2)\to F^p(t_1, t_2, x_1, x_2)$$ as $k,\,j\to\fz$.
Observe that for each $x_1,\,x_2\in (0, \fz)$,
$$\plzo\plzt(F^p(\ez_{1,\,k},\,\ez_{2,\,j},\,\cdot,\,\cdot))(x_1, x_2)\to \plzo\plzt(h)(x_1, x_2)$$
as $k,\,j\to\fz$.
Thus, by these facts and \eqref{F subharmonic}, we have that
for any $t_1,\,t_2,\,x_1,\,x_2\in\R_+$,
\begin{eqnarray*}
F^p(t_1, t_2, x_1, x_2)&=&\dlim_{\gfz{k\to\fz}{j\to \fz}}F^p(t_1+\ez_{1,\,k}, t_2+\ez_{2,\,j}, x_1, x_2)\\
&\ls&\dlim_{\gfz{k\to\fz}{j\to\fz}}\plzo\plzt(F^p(\ez_{1,\,k},\,\ez_{2,\,j},\,\cdot,\,\cdot))(x_1, x_2)\\
&=&\plzo\plzt(h)(x_1, x_2).
\end{eqnarray*}
Therefore,
\begin{equation*}
[u^\ast(x_1, x_2)]^p\le\sup_{t_1>0,\,t_2>0}F^p(t_1,t_2, x_1, x_2)\ls\mathcal R_P(h)(x_1, x_2).
\end{equation*}
By this together with $r:=1/p$, the $\lrzd$-boundedness of $\mathcal R_P$ and \eqref{norm of weak limit}, we then have
\begin{align*}
\|u^\ast\|_\lozd&\ls \Big\|\mathcal R_P(h)\Big\|^r_{L^r(\rlz)}
\ls{\sup_{t_1>0,\,t_2>0}\dinrp} F(t_1,\,t_2,\,x_1,\,x_2)\,d\mu_\lz(x_1,x_2),
\end{align*}
which implies that \eqref{radia max cont by holomo fun}. Thus the claim holds.


Now we prove \eqref{F subharmonic}.
 Observe that for any fixed $t_2,\,x_2\in\R_+$, $u,\,v$ and $w,\,z$ respectively satisfy the Cauchy-Riemann equations for $t_1$ and
$x_1$,
and for any fixed $t_1, \,x_1\in \R_+$, $u,\,w$ and $v,\,z$ respectively satisfy the Cauchy-Riemann equations for $t_2$ and
$x_2$. That is,
\begin{equation}\label{CR equ-1}
\displaystyle\left\{
    \begin{array}{ll}
      \prz_{x_1} u+\prz_{t_1} v=0, \\
      \prz_{t_1} u-\prz_{x_1} v=\frac{2\lambda}{x_1}v;
    \end{array}
  \right.
\hspace{0.8cm}
\left\{
    \begin{array}{ll}
    \prz_{x_1} w+\prz_{t_1} z=0, \\
      \prz_{t_1} w-\prz_{x_1} z=\frac{2\lambda}{x_1}z;
   \end{array}
  \right.
\end{equation}
and
\begin{equation}\label{CR equ-2}
\left\{
    \begin{array}{ll}
      \prz_{x_2} u+\prz_{t_2} w=0, \\
      \prz_{t_2} u-\prz_{x_2} w=\frac{2\lambda}{x_2}w;
    \end{array}
  \right.
\hspace{0.8cm}
\left\{
    \begin{array}{ll}
    \prz_{x_2} v+\prz_{t_2} z=0, \\
      \prz_{t_2} v-\prz_{x_2} z=\frac{2\lambda}{x_2}z.
   \end{array}
  \right.
\end{equation}

For fixed $t_2,\, x_2\in\R_+$, let
$$ F_1(t_1,\,t_2,\,x_1,\,x_2):=\lf\{[u(t_1,\,t_2,\,x_1,\,x_2)]^2+[v(t_1,\,t_2,\,x_1,\,x_2)]^2\r\}^{\frac12},$$
where $t_1,\,x_1\in \R_+$. For the moment, we fix $t_2$, $x_2$ and regard $F_1$ as a function of $t_1$ and $x_1$.
By \eqref{CR equ-1},  for $p\in (\frac{2\lz+1}{2\lz+2}, 1)$,
\begin{equation}\label{subharmonic for Fi}
\prz_{t_1}^2 F^p_1(t_1,\,t_2,\,x_1,\,x_2)+\prz_{x_1}^2 F^p_1(t_1,\,t_2,\,x_1,\,x_2)
+\frac{2\lz}{x_1}\prz_{x_1} F^p_1(t_1,\,t_2,\,x_1,\,x_2)\ge0;
\end{equation}
see \cite[Lemma 5]{ms} or \cite[p.\,206]{bdt}.
We also observe that for any $\ez_1,\,t_2,\,x_1, x_2\in\R_+$,
\begin{equation}\label{subharm fun lim bound valu}
\lim_{t_1\to 0}P^{[\lz]}_{t_1}(F^p_1(\ez_1,\,t_2,\,\cdot,\,x_2))(x_1)=F^p_1(\ez_1,\,t_2,\,x_1,\,x_2),
\end{equation}
and  by Lemma \ref{lem: F controled by riesz}, for all $t_2\in \R_+$ and almost $x_2\in \R_+$,
\begin{equation}\label{subharm fun  norm bdd}
\sup_{t_1>0}\inzf \lf[F^p_1(t_1,\,t_2,\,x_1,\,x_2)\r]^r\dmzo\le \sup_{t_1>0}\inzf F(t_1,\,t_2,\,x_1,\,x_2)\dmzo<\fz.
\end{equation}
Now we  claim that for any $\ez_1, t_1,\,t_2,\,x_1, x_2\in\R_+$,
\begin{align}\label{harmo contr-F1}
F^p_1(\ez_1+t_1,\,t_2,\,x_1,\,x_2)\le \plzo\lf(F^p_1(\ez_1,\,t_2,\,\cdot,\, x_2)\r)(x_1).
\end{align}
Indeed, as in \cite{bdt}, for any $\ez_1,\, t_1,\,t_2,\,x_1, x_2\in\R_+$,
let
$$\wz F_{1,\,t_2,\,x_2}(t_1,\,x_1):=\lf\{[\wz u(t_1,\,t_2,\,x_1,\,x_2)]^2+[\wz v(t_1,\,t_2,\,x_1,\,x_2)]^2\r\}^{\frac12}$$
and
$$U_{\ez_1,\,t_2,\,x_2}(t_1,\,x_1):=\wz F_{1,\,t_2,\,x_2}^p(\ez_1+t_1,\,x_1)-\wz \plzo[\wz F_{1,\,t_2,\,x_2}^p(\ez_1,\,\cdot)](x_1),$$
where  for fixed $t_2$ and $x_2$, $\wz \plzo(\wz F_{1,\,t_2,\,x_2}^p), \wz u$ are even extensions of $\plzo(\wz F_{1,\,t_2,\,x_2}^p)$ and $u$, and $\wz v$  is the odd extension of
$v$ with respect to $x_1$ to $\R_+\times \rr$, respectively.
By \eqref{bessel laplace equation}, \eqref{subharmonic for Fi}, \eqref{subharm fun lim bound valu} and \eqref{subharm fun  norm bdd}, it is not difficult to check that
$U_{\ez_1,\,t_2,\,x_2}$ satisfies (i)-(iv) of Lemma \ref{lem: subharmonic func}.
Then an application of Lemma \ref{lem: subharmonic func} shows that $U_{\ez_1,\,t_2,\,x_2}(t_1,x_1)\le 0$. Thus, \eqref{harmo contr-F1} holds.

Similarly, let
$$F_2(t_1,\,t_2,\,x_1,\,x_2):=\lf\{[w(t_1,\,t_2,\,x_1,\,x_2)]^2+[z(t_1,\,t_2,\,x_1,\,x_2)]^2\r\}^{\frac12}.$$
Since  \eqref{subharmonic for Fi}, \eqref{subharm fun lim bound valu} and \eqref{subharm fun  norm bdd} all hold with $F_1$ replaced with $F_2$, 
we have that  for any $\ez_1, t_1,\,t_2$, $x_1$, $x_2\in\R_+$,
\begin{align}\label{harmo contr-F2}
F^p_2(\ez_1+t_1,\,t_2,\,x_1,\,x_2)\le \plzo\lf(F^p_2(\ez_1,\,t_2,\,\cdot,\, x_2)\r)(x_1).
\end{align}
Observe that for any $t_1,\,t_2,\,x_1,\,x_2\in \R_+$,
\begin{equation*}
F(t_1,\,t_2,\,x_1,\,x_2)\sim \sum_{i=1}^2F_i(t_1,\,t_2,\,x_1,\,x_2).
\end{equation*}
By this fact, \eqref{harmo contr-F1}, \eqref{harmo contr-F2} and Lemma \ref{l-seimgroup prop} ${\rm (S_{ii})}$, we have that
\begin{equation}\label{harmo contr-F}
F^p(\ez_1+t_1,\,t_2,\,x_1,\,x_2)\ls \plzo\lf(F^p(\ez_1,\,t_2,\,\cdot,\,\cdot)\r)(x_1,x_2).
\end{equation}
Moreover, 
%
%
from \eqref{CR equ-2}, Lemma \ref{lem: F controled by riesz} and Lemma \ref{lem: subharmonic func},
we also deduce that
\begin{equation*}
F^p(t_1,\,\ez_2+ t_2,\,x_1,\,x_2)\ls \plzt\lf(F^p(t_1,\,\ez_2,\,\cdot,\,\cdot)\r)(x_1,x_2).
\end{equation*}
Now by this and \eqref{harmo contr-F}, we conclude that
\begin{eqnarray*}
F^p(\ez_1+t_1,\,\ez_2+ t_2,\,x_1,\,x_2)&&\ls \plzo\lf(F^p(\ez_1,\,\ez_2+ t_2,\,\cdot,\,\cdot)\r)(x_1,x_2)\\
&&\ls \plzo\plzt\lf(F^p(\ez_1,\,\ez_2,\,\cdot,\,\cdot)\r)(x_1,x_2).\noz
\end{eqnarray*}
This implies \eqref{F subharmonic}, and hence finishes the proof of Theorem \ref{thm H1 riesz characterization}.
\end{proof}

\smallskip
We next present the proof of Theorem \ref{thm Hp Riesz characterization}.

\begin{proof}[{\bf Proof of Theorem \ref{thm Hp Riesz characterization}}]  We first assume that $f\in\hap$.
By Theorem \ref{thm: char of Hardy spacs by max func}, we see that for any
$f\in\hap$,  $\plzo\plzt f\in\lpzd$ with
\begin{equation*}
\lf\|\plzo\plzt f\r\|_\lpzd\le \|\crz_{P} f\|_\lpzd\ls \|f\|_\hap,
\end{equation*}
where the implicit constant is independent of $t_1, t_2$ and $f$.
Moreover, by the semigroup property of $\left\{\plz\right\}_{t>0}$ (Lemma \ref{l-seimgroup prop})
and Theorem \ref{thm: char of Hardy spacs by max func},  we obtain that
\begin{eqnarray*}
\lf\|\plzo\plzt f\r\|_\hap\sim\lf\|\crz_{P}\lf(\plzo\plzt f\r)\r\|_\lpzd
&&=\lf\|\crz_{P}\lf(f\r)\r\|_\lpzd\ls \|f\|_\hap.
\end{eqnarray*}
This implies $\plzo\plzt f\in\hap$ with the norm independent of $t_1$ and $t_2$.
Then an application of \eqref{restr infty hp lp norm} shows that \eqref{eqn:restr infty lp norm; Riesz trans char Hp} holds.

%
%

Conversely, we assume that $f$ satisfies \eqref{eqn:restr infty lp norm; Riesz trans char Hp} and show $f\in \hap$.
The proof  is similar to that of Theorem \ref{thm H1 riesz characterization}. Precisely, we first claim that
\begin{equation}\label{lp norm tilde G}
\sup_{\gfz{t_1>0}{t_2>0}}\iint_{\R_+\times\R_+}|F(t_1, t_2, x_1, x_2)|^p\,\dmzd\ls1,
\end{equation}
where $F(t_1, t_2, x_1, x_2)$ is as in \eqref{sum of real and imagi parts}.
From Proposition \ref{pp-aoti}, we see that the definition of $F$ makes sense.
Indeed, for $\dz_1,\,\dz_2,\,t_1,\,t_2,\, x_1,\,x_2\in \R_+$,
let
\begin{equation*}\label{real and imag parts p-u}
u(\dz_1,\,\dz_2,\,t_1,\,t_2,\,x_1,\,x_2):=\plzo\plzt\lf(\plzzo\plzzt f\r)(x_1,x_2),
\end{equation*}
\begin{equation*}\label{real and imag parts p-v}
\,\,\,\,v(\dz_1,\,\dz_2,\,t_1,\,t_2,\,x_1,\,x_2):=\qlzo\plzt \lf(\plzzo\plzzt f\r)(x_1,x_2),
\end{equation*}
\begin{equation*}\label{real and imag parts p-w}
w(\dz_1,\,\dz_2,\,t_1,\,t_2,\,x_1,\,x_2):=\plzo\qlzt\lf(\plzzo\plzzt f\r)(x_1,x_2),
\end{equation*}
and
\begin{equation*}\label{real and imag parts p-z}
z(\dz_1,\,\dz_2,\,t_1,\,t_2,\,x_1,\,x_2):=\qlzo\qlzt \lf(\plzzo\plzzt f\r)(x_1,x_2).
\end{equation*}
%
%
%
Moreover, as functions of variables $(\dz_1,\,\dz_2,\,t_1,\,t_2,\,x_1,\,x_2)$, we  define
$$G_1:=\lf\{u^2+v^2\r\}^{\frac12},\ G_2:=\lf\{w^2+z^2\r\}^{\frac12},\ G_3:=\lf\{u^2+w^2\r\}^{\frac12},\ G_4:=\lf\{v^2+z^2\r\}^{\frac12}$$
and
\begin{eqnarray*}
G&&:=\{u^2+v^2+w^2+z^2\}^{\frac12}.
\end{eqnarray*}

We now prove that for  all $\dz_1,\,\dz_2,\,t_1,\,t_2,\,x_1,\,x_2\in \R_+$, when $i:=1,\,2,$
\begin{equation}\label{subhar G-1 p}
G^p_i(\dz_1,\,\dz_2,\,t_1,\,t_2,\,x_1,\,x_2)\le \plzo\lf(G^p_i(\dz_1,\,\dz_2,\,0,\,t_2,\cdot,\, x_2)\r)(x_1),
\end{equation}
and when $i:=3,\,4$,
\begin{equation}\label{subhar G-2 p}
G^p_i(\dz_1,\,\dz_2,\,t_1,\,t_2,\,x_1,\,x_2)\le \plzt\lf(G^p_i(\dz_1,\,\dz_2,\,t_1,\,0,\, x_1,\,\cdot)\r)(x_2).
\end{equation}
Here we mention that for any function $g\in\lrz$ with $r\in[1, \fz)$,
$$P^{[\lz]}_0g:=\lim_{t\to 0}\plz g= g\quad{\rm and}\quad Q^{[\lz]}_0g:=\lim_{t\to0}Q_tg=\riz g;$$
see \cite{bdt,bfbmt}.  Since the assumption that
$f$ is restricted at infinity implies that $\plzo\plzt f\in \lszd$
for all $t_1, t_2\in \R_+$ and $s\in[p, \fz]$,
$G_i(\dz_1,\,\dz_2,\,t_1,\,0,\, x_1,\,x_2)$ and  $G_i(\dz_1,\,\dz_2,\,0,\,t_2,\, x_1,\,x_2)$ make sense for $i\in\{1,2,3,4\}$.

By similarity, we only show that \eqref{subhar G-1 p} holds for $G_2$. We fix $\dz_1,\dz_2,t_2,x_2\in \R_+$ and regard $G_2$ as a function of $t_1$ and $x_1$ for the moment and the argument is analogous to that for \eqref{harmo contr-F1}.  Indeed,
let
$$V_{\dz_1,\,\dz_2,\,t_2,\,x_2}(t_1,\,x_1)
:=\wz G^p_{2,\,\dz_1,\,\dz_2,\,t_2,\,x_2}(t_1,\,x_1)- \wz\plzo\lf(\wz G^p_{2,\,\dz_1,\,\dz_2,\,t_2,\,x_2}(0,\,\cdot)\r)(x_1),$$
where
$\wz G^p_{2,\,\dz_1,\,\dz_2,\,t_2,\,x_2}(t_1,\,x_1):=[\wz w^2+\wz z^2]^\frac12$ with $\wz w,\,\wz z$ are even and odd extensions of $w$ and $z$ with respect to $x_1$ to $\R$, respectively,
, and $\wz\plzo(\wz G^p_{2,\,\dz_1,\,\dz_2,\,t_2,\,x_2}(0,\,\cdot))(x_1)$
is the even extension of $\plzo(\wz G^p_{2,\,\dz_1,\,\dz_2,\,t_2,\,x_2}(0,\,\cdot))(x_1)$ to $\R$ with respect to $x_1$.

We now show that $V_{\dz_1,\,\dz_2,\,t_2,\,x_2}$ satisfies (i)-(iv) of Lemma \ref{lem: subharmonic func}.
In fact, since $V_{\dz_1,\,\dz_2,\,t_2,\,x_2}$ is an even function with respect to $x_1$, we only need to consider $x_1\in\R_+$.
Since the assumption that
$f$ is restricted at infinity implies that $\plzo\plzt f\in \lszd$
for all $t_1, t_2\in \R_+$ and $s\in[p, \fz]$,
by Lemma \ref{l-seimgroup prop} ${\rm (S_i)}$, the uniform boundedness of $\qlz$ on $\ltz$ (see \cite[p.\,87]{ms}),
we further obtain that for fixed $\dz_1,\,\dz_2,\,t_1,\,t_2\in\R_+$,
\begin{equation}\label{3.20}
\begin{array}[t]{cl}
&\displaystyle\dinrp G^2_2(\dz_1,\,\dz_2,\,t_1,\,t_2,\,x_1,\,x_2)\,d\mu_\lz(x_1,x_2)\\
&\quad=
\displaystyle\dinrp \lf[\lf|\plzo\qlzt\lf(\plzzo\plzzt f\r)\r|^2
+\lf|\qlzo\qlzt \lf(\plzzo\plzzt f\r)\r|^2\r]\,d\mu_\lz(x_1,x_2)\\
&\quad\ls\displaystyle\dinrp\lf[\plzzo\plzzt f(x_1, x_2)\r]^2\,d\mu_\lz(x_1,x_2)<\fz.
\end{array}
\end{equation}

Observe that for all $\dz_1,\,\dz_2,\,t_2$ and almost every $x_1,\,x_2\in (0, \fz)$,
$$[G_2(\dz_1,\,\dz_2,\,0,\,t_2,\,x_1,\,x_2)]^2=\lf|\qlzt\lf(\plzzo\plzzt f\r)(x_1,x_2)\r|^2+\lf|\rizo\qlzt \lf(\plzzo\plzzt f\r)(x_1,x_2)\r|^2.$$
This fact together with Lemma \ref{l-seimgroup prop} ${\rm (S_i)}$, the uniform boundedness of  $\qlzt$ on $L^2(\R_+, \dmzt)$ and the boundedness of $\rizo$ on $L^2(\R_+, \dmzo)$ implies that
\begin{equation*}
\begin{array}[t]{cl}
&\dint_0^\fz\dint_0^\fz\lf[\plzo(G^p_2(\dz_1,\,\dz_2,\,0,\,t_2,\,\cdot,\,x_2))(x_1)\r]^{2/p}\,d\mu_\lz(x_1,x_2)\\
&\quad\ls\dint_0^\fz\dint_0^\fz\lf|G_2(\dz_1,\,\dz_2,\,0,\,t_2,\,x_1,\,x_2)\r|^2\,d\mu_\lz(x_1,x_2)\\
&\quad\ls \dint_0^\fz\dint_0^\fz\lf[\lf|\qlzt\lf(\plzzo\plzzt f\r)(x_1,x_2)\r|^2+\lf|\rizo\qlzt \lf(\plzzo\plzzt f\r)(x_1,x_2)\r|^2\r]\,d\mu_\lz(x_1,x_2)\\
&\quad\ls\dint_0^\fz\dint_0^\fz\lf|\plzzo\plzzt f(x_1,x_2)\r|^2\,d\mu_\lz(x_1,x_2),
\end{array}
\end{equation*}
which, together with \eqref{3.20}, yields that for all $\dz_1,\,\dz_2,\,t_2$ and almost all $x_2$,
\begin{equation*}
\begin{array}[t]{cl}
&\dsup_{0<t_1<\fz}\dint_0^\fz|V_{\dz_1,\,\dz_2,\,t_2,\,x_2}(t_1,\,x_1)|^{2/p}\,dm_\lz(x_1)
\ls\dint_0^\fz\lf|\plzzo\plzzt f(x_1,x_2)\r|^2\,dm_\lz(x_1)<\fz.
\end{array}
\end{equation*}
Therefore, for fixed $\dz_1,\,\dz_2,\,t_2,\,x_2$, $V_{\dz_1,\,\dz_2,\,t_2,\,x_2}(t_1, x_1)$ satisfies the assumptions of Lemma \ref{lem: subharmonic func} and hence \eqref{subhar G-1 p} for $G_2$
follows from Lemma \ref{lem: subharmonic func} immediately.

By an argument involving \eqref{subhar G-1 p} and \eqref{subhar G-2 p}, we further see that for almost all $x_1$ and $x_2$,
$$G^p(\dz_1,\,\dz_2,\,t_1,\,t_2,\,x_1,\,x_2)\ls \plzt\plzo\lf(G^p(\dz_1,\,\dz_2,\,0,\,0,\,\cdot,\,\cdot)\r)(x_1, x_2).
$$
Moreover, observe that
\begin{eqnarray*}
G(\dz_1,\,\dz_2,\,0,\,0,\,x_1,\,x_2)&&\sim \lf|P^{[\lz]}_{\dz_1}P^{[\lz]}_{\dz_2}(f)(x_1, x_2)\r|+\lf|\rizo\lf(P^{[\lz]}_{\dz_1}P^{[\lz]}_{\dz_2} f\r)(x_1, x_2)\r|\\
&&\quad+\lf|\rizt\lf(P^{[\lz]}_{\dz_1}P^{[\lz]}_{\dz_2} f\r)(x_1, x_2)\r|+\lf|\rizo\rizt\lf(P^{[\lz]}_{\dz_1}P^{[\lz]}_{\dz_2} f\r)(x_1, x_2)\r|.
\end{eqnarray*}
Using these facts, \eqref{eqn:restr infty lp norm; Riesz trans char Hp} and Lemma \ref{l-seimgroup prop} ${\rm (S_i)}$,
we see that
\begin{align}\label{lp norm F_1}
&\dinzf[G(\dz_1,\,\dz_2,\,t_1,\,t_2,\,x_1,\,x_2)]^p\,\dmzd\\
&\quad\ls \dinzf\plzt\plzo([G(\dz_1,\,\dz_2,\,0,\,0,\,\cdot,\,\cdot)]^p)(x_1,x_2)\,\dmzd\noz\\
&\quad\ls\dinzf|G(\dz_1,\,\dz_2,\,0,\,0,\,x_1,\,x_2)|^p\,\dmzd\noz\\
&\quad\sim\dint_0^\fz\dint_0^\fz\lf[\lf|P^{[\lz]}_{\dz_1}P^{[\lz]}_{\dz_2}(f)(x_1, x_2)\r|+\lf|\rizo\lf(P^{[\lz]}_{\dz_1}P^{[\lz]}_{\dz_2} f\r)(x_1, x_2)\r|\r.\noz\\
&\quad\quad\quad\quad\quad\quad
+\lf|\rizo\lf(P^{[\lz]}_{\dz_1}P^{[\lz]}_{\dz_2} f\r)(x_1, x_2)\r|\noz\\
&\quad\quad\quad\quad\quad\quad\quad\quad\lf.+\lf|\rizo\rizt\lf(P^{[\lz]}_{\dz_1}P^{[\lz]}_{\dz_2} f\r)(x_1, x_2)\r|\r]^p\,\dmzd\noz \ls1,\noz
\end{align}
where the implicit constant is independent of $t_1$, $t_2$, $\dz_1$ and $\dz_2$.

Observe that for each $t_1,\,t_2$, $x_1,\,x_2\in\R_+$,
$$G(\dz_1,\,\dz_2,\,t_1,\,t_2,\,x_1,\,x_2)\to F(t_1, t_2, x_1, x_2)\quad {\rm as}\quad \dz_1,\,\dz_2\to0,$$
Indeed, observe that
$P^{[\lz]}_{\dz_1}P^{[\lz]}_{\dz_2} f\to f$ in $G(1,1;1,1)'$ as $\dz_1,\,\dz_2\to0$.
By these facts together with \eqref{lp norm F_1} and
the Fatou lemma, we further have \eqref{lp norm tilde G}.

Let $q\in({2\lz+1\over 2\lz+2}, p)$ and $r:=p/q$. As in the proof of \eqref{subharmonic for Fi},
we see that
\begin{equation}\label{harmo cont G}
F^q(\dz_1+t_1,\,\dz_2+ t_2,\,x_1,\,x_2)\ls \plzo\plzt\lf(F^q(\dz_1,\,\dz_2,\,\cdot,\,\cdot)\r)(x_1,x_2).
\end{equation}
Then by \eqref{lp norm tilde G}, there exist a subsequence  $\{F^q(\dz_{1,\,k},\,\dz_{2,\,j},\,\cdot,\,\cdot)\}_{\dz_{1,\,k};\,\dz_{2,\,j}>0}$ 
of $\{F^q(\dz_1,\,\dz_2,\,\cdot,\,\cdot)\}_{\dz_1;\,\dz_2>0} $
 and  $h\in L^r(\rlz)$ such that $\{F^q(\dz_{1,\,k},\,\dz_{2,\,j},\,\cdot,\,\cdot)\}_{\dz_{1,\,k};\,\dz_{2,\,j}>0}$ converges weakly to $h$
 in $L^r(\rlz)$ as $k,\,j\to\fz$, which further implies that
\begin{align}\label{lp norm weak limit}
\|h\|^r_\lrzd
&=\bigg\{\dsup_{\|g\|_{L^{r'}(\rlz)}\le1}\lim_{\gfz{k\to\fz}{j\to\fz}}
\bigg|\dint_0^\fz\dint_0^\fz g(x_1,x_2)F^q(\dz_{1,\,k},\,\dz_{2,\,j},\,x_1,x_2)\,\,d\mu_\lz(x_1,x_2)\bigg|\bigg\}^r\\
&\le \dlimsup_{\gfz{k\to\fz}{j\to\fz}}\big\|F^q(\dz_{1,\,k},\,\dz_{2,\,j},\,\cdot,\,\cdot)\big\|_\lrzd^r\noz\\
&\le\dlimsup_{\gfz{k\to\fz}{j\to\fz}}\big\|F(\dz_{1,\,k},\,\dz_{2,\,j},\,\cdot,\,\cdot)\big\|^p_\lpzd\noz\\
&\ls1.\noz
\end{align}
Moreover, by \eqref{harmo cont G}, we then have that
for any $t_1,\,t_2,\,x_1,\,x_2\in \R_+$,
\begin{align*}
F^q(t_1, t_2, x_1, x_2)
&\ls\plzo\plzt(h)(x_1, x_2).
\end{align*}
Therefore,
\begin{equation*}
[u^\ast(x_1, x_2)]^q\le\supd F^q(t_1,t_2, x_1, x_2)\ls\mathcal R_P(h)(x_1, x_2).
\end{equation*}
By this together with the $\lrzd$-boundedness of $\mathcal R_P$ and
\eqref{lp norm weak limit}, we then have
\begin{equation*}
\|u^\ast\|^p_\lpzd=\|(u^\ast)^q\|_\lrzd^r\ls\Big\|\mathcal R_P(h)\big|\Big\|^r_\lrzd\ls \|h\|^r_\lrzd\ls1.
\end{equation*}
This finishes the proof of Theorem \ref{thm Hp Riesz characterization}.
\end{proof}

%
%
%
%
%
%
%

\bigskip
\bigskip
{\bf Acknowledgments:}
X. T. Duong is supported by ARC DP 140100649.
J. Li is supported by ARC DP 160100153 and Macquarie University New Staff Grant.
B. D. Wick's research supported in part by National Science Foundation
DMS grants \#1603246 and \#1560955.
D. Yang is supported by the NNSF of China (Grant No. 11571289) and the State Scholarship Fund of China (No. 201406315078).

\end{document}